\newcommand{\nc}{\newcommand}
\nc{\les}{\lesssim}
\nc{\nit}{\noindent}
\nc{\nn}{\nonumber}
\nc{\D}{\partial}
\nc{\diff}[2]{\frac{d #1}{d #2}}
\nc{\diffn}[3]{\frac{d^{#3} #1}{d {#2}^{#3}}}
\nc{\pdiff}[2]{\frac{\partial #1}{\partial #2}}
\nc{\pdiffn}[3]{\frac{\partial^{#3} #1}{\partial{#2}^{#3}}}
\nc{\abs}[1] {\lvert #1 \rvert}
\nc{\cAc}{{\cal A}_c}
\nc{\cE}{{\cal E}}
\nc{\cF}{{\cal F}}
\nc{\cP}{{\cal P}}
\nc{\cV}{{\cal V}}
\nc{\cQ}{{\cal Q}}
\nc{\cGin}{{\cal G}_{\rm in}}
\nc{\cGout}{{\cal G}_{\rm out}}
\nc{\cO}{{\cal O}}
\nc{\Lav}{{\cal L}_{\rm av}}
\nc{\cL}{{\cal L}}
\nc{\cB}{{\cal B}}
\nc{\cZ}{{\cal Z}}
\nc{\cR}{{\cal R}}
\nc{\cT}{{\cal T}}
\nc{\cY}{{\cal Y}}
\nc{\cX}{{\cal X}}
\nc{\cXT}{{{\cal X}(T)}}
\nc{\cBT}{{{\cal B}(T)}}
\nc{\vD}{{\vec \mathcal{D}}}
\nc{\efield}{\mathcal{E}}
\nc{\vE}{{\vec \efield}}
\nc{\vB}{{\vec \mathcal{B}}}
\nc{\vH}{{\vec \mathcal{H}}}
\nc{\mR}{\mathcal R}
\nc{\mF}{\mathcal F}
\nc{\ty}{{\tilde y}}
\nc{\tu}{{\tilde u}}
\nc{\tV}{{\tilde V}}
\nc{\Pc}{{\bf P_c}}
\nc{\bx}{{\bf x}}
\nc{\bX}{{\bf X}}
\nc{\bXYZ}{{\bf XYZ}}
\nc{\bY}{{\bf Y}}
\nc{\bF}{{\bf F}}
\nc{\bS}{{\bf S}}
\nc{\dV}{{\delta V}}
\nc{\dE}{{\delta E}}
\nc{\TT}{{\Theta}}
\nc{\dPsi}{{\delta\Psi}}
\nc{\order}{{\cal O}}
\nc{\Rout}{R_{\rm out}}
\nc{\eplus}{e_+}
\nc{\eminus}{e_-}
\nc{\epm}{e_\pm}
\nc{\sgn}{\text{sgn}}
\nc{\eps}{\varepsilon}
\nc{\vnabla}{{\vec\nabla}}
\nc{\G}{\Gamma}
\nc{\w}{\omega}
\nc{\mh}{h}
\nc{\mg}{g}
\nc{\vphi}{\varphi}
\nc{\tlambda}{\tilde\lambda}
\nc{\be}{\begin{equation}}
\nc{\ee}{\end{equation}}
\nc{\ba}{\begin{eqnarray}}
\nc{\ea}{\end{eqnarray}}
\nc{\g}{\gamma}
\nc{\ol}{\overline}
\newtheorem{theorem}{Theorem}[section]
\newtheorem{lemma}[theorem]{Lemma}
\newtheorem{prop}[theorem]{Proposition}
\newtheorem{corollary}[theorem]{Corollary}
\nc{\pT}{\partial_T}
\nc{\pz}{\partial_z}
\nc{\pt}{\partial_t}
\nc{\la}{\langle}
\nc{\ra}{\rangle}
\nc{\infint}{\int_{-\infty}^{\infty}}
\nc{\halfwidth}{6.5cm}
\nc{\figwidth}{10cm}
\newcommand{\f}{\frac}
\nc{\nlayers}{L} \nc{\nsectors}{M}
\nc{\indicator}{\mathbf{1}}
\nc{\Rhole}{R_{\rm hole}}
\nc{\Rring}{R_{\rm ring}}
\nc{\neff}{n_{\rm eff}}
\nc{\Frem}{F_{\rm rem}}
\nc{\R}{\mathbb R}
\nc{\mJ}{\mathcal J}
\nc{\C}{\mathbb C}
\nc{\Z}{\mathbb Z}
\nc{\N}{\mathbb N}
\nc{\DD}{\Delta}
\nc{\cD}{\mathcal D}
\nc{\lnorm}{\left\|}
\nc{\rnorm}{\right\|}
\nc{\rnormp}{\right\|_{\ell^{p,\eps}}}
\nc{\rar}{\rightarrow} 
\begin{document}
	
	\begin{abstract}
		
		We consider the higher order Schr\"odinger operator $H=(-\Delta)^m+V(x)$ in $n$ dimensions with real-valued potential $V$ when $n>2m$, $m\in \mathbb N$, $m>1$.  When $n$ is odd, we prove that the wave operators  
		extend to bounded operators on $L^p(\mathbb R^n)$ for all $1\leq p\leq\infty$  under $n$ and $m$ dependent conditions  on the potential analogous to the case when $m=1$.  Further, if $V$ is small in certain norms,  that depend  $n$ and $m$, the wave operators are bounded on the same range for even $n$.  We further show that if the smallness assumption is removed in even dimensions the wave operators remain bounded in the range $1<p<\infty$.

	\end{abstract}

	\title[Wave operators for higher order  Schr\"odinger operators]{\textit{The $L^p$-continuity of wave operators for higher order Schr\"odinger operators } } 
	
	\author[M.~B. Erdo\smash{\u{g}}an, W.~R. Green]{M. Burak Erdo\smash{\u{g}}an and William~R. Green}
	\thanks{  The first author is partially supported by Simons Foundation Grant 634269. The second author is partially supported by Simons Foundation
Grant 511825. }
	\address{Department of Mathematics \\
		University of Illinois \\
		Urbana, IL 61801, U.S.A.}
	\email{berdogan@illinois.edu}
	\address{Department of Mathematics\\
		Rose-Hulman Institute of Technology \\
		Terre Haute, IN 47803, U.S.A.}
	\email{green@rose-hulman.edu}

	\maketitle

	\section{Introduction}
	
	We consider the higher order Schr\"odinger equation 
	\begin{align*}
	i\psi_t =(-\Delta)^m\psi +V\psi, \qquad x\in \R^n, \qquad m>1, \quad  m\in \mathbb N.
	\end{align*}
	We restrict our focus to the case when the spatial dimension $n>2m$.  Here $V$ is a real-valued, decaying potential.  We denote the free higher order Schr\"odinger operator by $H_0=(-\Delta)^m$ and the perturbed operator by $H=(-\Delta)^m+V(x)$. We study the $L^p$ boundedness of the wave operators, which are defined by
	$$
	W_{\pm}=s\text{\ --}\lim_{t\to \pm \infty} e^{itH}e^{-itH_0}.
	$$
	For the classes of potentials $V$ we consider, the wave operators exist and are asymptotically complete, see the work of Agmon, \cite{agmon}, H\"ormander, \cite{Hor} and  Schechter, \cite{ScheArb,Sche}.
	
	We use the notation $\la x\ra$ to denote $  (1+|x|^2)^{\f12}$, $\mathcal F(f)$ or $\widehat f$ to denote the Fourier transform of $f$.  We write $A\les B$ to say that there exists a constant $C$ with $A\leq C B$, and write $a-:=a-\epsilon$ and $a+:=a+\epsilon$ for some $\epsilon>0$ throughout the paper.  We use the norm $\|f\|_{H^{\delta}}=\|\la \cdot \ra^{\delta} \widehat f(\cdot)\|_2$.   We first state a small potential result that is valid in all dimensions $n>2m$.

	\begin{theorem}\label{thm:small}
		Let $n>2m$. 
		Assume that the $V$ is a real-valued potential on $\R^n$ and   fix $0<\delta\ll 1$. Then $\exists C=C(\delta,n,m)>0 $ so that the wave operators  extend   to   bounded operators on $L^p(\R^n)$ for all $1\leq p\leq \infty$, provided that 
		\begin{enumerate}[i)]
			
			\item $\big\| \la \cdot \ra^{\frac{4m+1-n}{2}+\delta} V(\cdot)\big\|_{2}<C$   when $2m<n<4m-1$,
			
			\item $\big\|\la \cdot \ra^{1+\delta}V(\cdot)\big\|_{H^{\delta}}<C$ when $n=4m-1$,
			
			\item  $\big\|\mathcal F(\la \cdot \ra^{\sigma} V(\cdot))\big\|_{L^{ \frac{n-1-\delta}{n-2m-\delta} }}<C$ for some $\sigma>\frac{2n-4m}{n-1-\delta}+\delta$   when $n>4m-1$.
			
		\end{enumerate}
		
	\end{theorem}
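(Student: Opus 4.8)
The plan is to combine the stationary representation of the wave operators with the Born series, using the smallness of $V$ to sum the series in $\mathcal B(L^1)\cap\mathcal B(L^\infty)$. We treat $W_+$; the operator $W_-$ is the mirror image. For $V$ in any of the classes (i)--(iii), smallness forces $\|R_0^\pm(\mu)V\|$ to be uniformly small on the relevant weighted space, so $H=(-\Delta)^m+V$ has no eigenvalues and no threshold resonance (a Birman--Schwinger argument), $P_{ac}(H)=I$, the wave operators are complete, and the stationary formula together with the Neumann expansion of the perturbed resolvent gives
\[
W_+-I=\sum_{N=1}^\infty W_N,\qquad W_N:=\frac{(-1)^{N}}{2\pi i}\int_0^\infty R_0^+(\mu)\,V\big(R_0^+(\mu)\,V\big)^{N-1}\big[R_0^+(\mu)-R_0^-(\mu)\big]\,d\mu,
\]
where $R_0^\pm(\mu)=\lim_{\eps\downarrow0}((-\Delta)^m-\mu\mp i\eps)^{-1}$. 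It then suffices to prove $\|W_N\|_{L^1\to L^1}+\|W_N\|_{L^\infty\to L^\infty}\le C_0\,C^N\,\|V\|_{\mathcal N}^{\,N}$ with $C$ \emph{independent of $N$}, where $\|\cdot\|_{\mathcal N}$ denotes the norm in (i), (ii), or (iii): for $\|V\|_{\mathcal N}$ below $1/C$ the series $\sum_N W_N$ then converges on $L^1$ and on $L^\infty$, and interpolation covers $1<p<\infty$.

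The structural input is the kernel of $R_0^\pm(\lambda^{2m})$ (we set $\mu=\lambda^{2m}$, $d\mu=2m\lambda^{2m-1}d\lambda$). Factoring $(-\Delta)^m-\lambda^{2m}=\prod_{k=0}^{m-1}(-\Delta-\w^k\lambda^2)$ with $\w=e^{2\pi i/m}$ and using partial fractions,
\[
R_0^\pm(\lambda^{2m})=\lambda^{-2(m-1)}\sum_{k=0}^{m-1}c_k\,(-\Delta-\w^k\lambda^2)^{-1}.
\]
The $k=0$ term is the free Schr\"odinger resolvent $(-\Delta-\lambda^2\mp i0)^{-1}$, with kernel $|x-y|^{2-n}e^{\pm i\lambda|x-y|}\Phi_n(\lambda|x-y|)$ for a symbol-type $\Phi_n$ (in odd $n$, a polynomial in $1/(\lambda|x-y|)$); for $1\le k\le m-1$ the point $\w^k\lambda^2$ lies off $[0,\infty)$, so $(-\Delta-\w^k\lambda^2)^{-1}$ has an exponentially localized kernel $\sim e^{-c\lambda|x-y|}$. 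Only $k=0$ contributes a jump, so $[R_0^+-R_0^-](\lambda^{2m})(x,y)=c_0\lambda^{-2(m-1)}|x-y|^{2-n}(e^{i\lambda|x-y|}-e^{-i\lambda|x-y|})\widetilde\Phi_n(\lambda|x-y|)$. Because $n>2m$, the sum tends to $((-\Delta)^m)^{-1}$ as $\lambda\downarrow0$, whose kernel $\sim|x-y|^{2m-n}$ is integrable and decaying; thus there is no zero-energy obstruction and the only delicate regime is $\lambda\to\infty$, where every Schr\"odinger piece behaves like $\lambda^{(n+1-4m)/2}|x-y|^{-(n-1)/2}e^{\pm i\lambda|x-y|}$ times a symbol.

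For $W_1$ we insert these kernels. The contribution in which both resolvents supply their $k=0$ piece is, modulo bounded symbol factors, the operator with kernel
\[
\int_{\R^n}\frac{V(x_1)}{|x-x_1|^{\frac{n-1}{2}}\,|x_1-y|^{\frac{n-1}{2}}}\bigg(\int_0^\infty \lambda^{\,n-2m}\,e^{i\lambda(|x-x_1|\pm|x_1-y|)}\,b(\lambda,x,x_1,y)\,d\lambda\bigg)dx_1,
\]
which is, up to the value of the radial exponent (now $n-2m$ in place of $n-2$), precisely the oscillatory integral operator of the $m=1$ theory. The $+$ phase has no critical point; repeated integration by parts in $\lambda$ yields a rapidly decaying kernel, and the resulting Schur bounds depend only on how singular $\lambda^{n-2m}$ is, i.e.\ on the size of $n-2m$ relative to $2m-1$. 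The $-$ phase $|x-x_1|-|x_1-y|$ vanishes on a hypersurface and is the source of the trichotomy: when $2m<n<4m-1$ the radial integral is mild and pointwise decay, $\la\cdot\ra^{\frac{4m+1-n}{2}+\delta}V\in L^2$, closes the estimate; when $n=4m-1$ a logarithm appears and one absorbs it at the cost of half a derivative on $V$, i.e.\ the $H^\delta$ norm; when $n>4m-1$ the radial integral is too singular for any weighted-$L^1$ bound, and instead one realises the operator as a Fourier multiplier acting on $\la\cdot\ra^\sigma V$ and controls it by $\|\mathcal F(\la\cdot\ra^\sigma V)\|_{L^{\frac{n-1-\delta}{n-2m-\delta}}}$, which is condition (iii). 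The pieces of $W_1$ in which some resolvent supplies a $k\neq0$ (exponentially localized) factor are strictly better. Altogether $\|W_1\|_{L^1\to L^1}+\|W_1\|_{L^\infty\to L^\infty}\les\|V\|_{\mathcal N}$.

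For $W_N$ with $N\ge2$ we insert the resolvent decomposition into each of the $N$ slots. Any slot contributing a $k\neq0$ factor gives an exponentially localized kernel, which is harmless and, after the $\lambda$-integration, even improving; each maximal run of consecutive $k=0$ factors is handled by iterating the $W_1$ analysis, distributing the decay and regularity carried by each copy of $V$ among the adjacent factors. The number of configurations is at most $m^N$, so the heart of the matter is to show that \emph{each additional factor $R_0^+(\lambda^{2m})V$ multiplies the $L^1\to L^1$ and $L^\infty\to L^\infty$ Schur constants by at most $C\|V\|_{\mathcal N}$, with $C$ independent of $N$}. Establishing this uniform control --- reconciling the single radial integral with arbitrarily many resolvent factors and their accumulating critical configurations --- is the main obstacle. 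Once it is in place, $\|W_N\|_{L^1\to L^1}+\|W_N\|_{L^\infty\to L^\infty}\le C_0 C^N\|V\|_{\mathcal N}^N$, the series converges for $\|V\|_{\mathcal N}$ small, and the theorem follows.
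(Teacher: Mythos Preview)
Your overall architecture is the same as the paper's: stationary representation, Born expansion, and summing the series via a bound of the form $\|W_N\|_{L^p\to L^p}\le C^N\|V\|_{\mathcal N}^N$. But the technical heart of the theorem is exactly the estimate you flag as ``the main obstacle'' and then assume. You write that each additional factor $R_0^+(\lambda^{2m})V$ multiplies the Schur constants by at most $C\|V\|_{\mathcal N}$, and that ``once it is in place'' the theorem follows. That is the theorem. The position-space oscillatory-integral sketch you give for $W_1$ does not iterate in any obvious way: with $N$ resolvents the phase is a sum $\sum_j\pm|x_{j-1}-x_j|$, the critical set is high-dimensional, and there is no apparent mechanism in your framework for producing a bound that factors as a product of $N$ copies of $\|V\|_{\mathcal N}$ with an $N$-independent constant. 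Saying ``distribute the decay among adjacent factors'' is not an argument; the accumulation of critical configurations you mention is precisely what has to be controlled, not assumed away.

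The paper does not work in position space at all for this step. It passes to the Fourier side, applies Cauchy's integral formula to collapse the $\lambda$-integral, and is left with the multiplier $\prod_{j}(|\xi-k_j|^{2m}-|\xi|^{2m}-i0)^{-1}$. The key algebraic move is the factorization $|\xi-k|^{2m}-|\xi|^{2m}=(|\xi-k|^2-|\xi|^2)\sum_{\ell}|\xi-k|^{2\ell}|\xi|^{2m-2-2\ell}$, which separates an $m$-independent directional piece from a symbol $p_\omega(\xi)$ of order $-(2m-2)$. Each factor then becomes a one-dimensional convolution along the direction $\omega_j=k_j/|k_j|$ smeared by an $L^1$ kernel $h_{k_j}$. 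After undoing these convolutions the $N$-fold operator acts on $f$ by reflections and translations, and the $L^p$ bound reduces to an $L^1$ estimate for a function $F$ on $(S^{n-1}\times\R^n\times\R)^N$ which genuinely factors, giving the $C^N\|V\|_{\mathcal N}^N$ bound via Hausdorff--Young and Hardy's inequality. This Fourier-side factorization is what your position-space outline is missing; without it (or a substitute of comparable strength) the proposal is a plan, not a proof.
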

	
For boundedness on $L^p$ when $1<p<\infty$, we may remove the smallness assumption above provided $V$ decays sufficiently at spatial infinity.  We define zero energy to be regular if there are no non-trivial distributional solutions to $H\psi=0$ with $\la x \ra^{\frac{n}{2}-2m-}\psi(x) \in L^2$.  We show
	
	\begin{theorem}\label{thm:main}
		Let $n>2m$. 
		Assume that the $V$ is a real-valued potential on $\R^n$ so that
		\begin{enumerate}[i)]
			\item $|V(x)|\les \la x\ra^{-\beta}$ for some $\beta>n+3$ when  $n$ is odd and for some  $\beta>n+4$ when $n$ is even
			
			\item $\|\la \cdot \ra^{1+}V(\cdot)\|_{H^{0+}}<\infty$   when $n=4m-1$,
			
			\item for some $0<\delta\ll 1$ and $\sigma>\frac{2n-4m}{n-1-\delta}$, $\|\mathcal F(\la \cdot \ra^{\sigma} V(\cdot))\|_{L^{ \frac{n-1-\delta}{n-2m-\delta} }}<\infty$   when $n>4m-1$,
			
			\item $H=(-\Delta)^m+V(x)$ has no positive
			eigenvalues and zero energy is regular. 
		\end{enumerate}
	Then,	the wave operators extend to bounded operators on $L^p(\R^n)$ for all $1<p<\infty$.  
 \end{theorem}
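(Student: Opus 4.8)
The plan is to analyze $W_+$ via its stationary representation; $W_-$ is treated identically. Set $R_0^{\pm}(\mu)=((-\Delta)^m-\mu\mp i0)^{-1}$, $R_V^{\pm}(\mu)=(H-\mu\mp i0)^{-1}$, $v=|V|^{\frac12}$, $U=\sgn(V)$ and $M^{\pm}(\mu)=U+vR_0^{\pm}(\mu)v$. Using the symmetric resolvent identity $R_V^{\pm}V=R_0^{\pm}v(M^{\pm})^{-1}v$ together with the standard stationary formula for the wave operators (see \cite{agmon,Hor,ScheArb,Sche}), the substitution $\mu=\lambda^{2m}$ produces an expression of the form
\begin{equation*}
W_+-I=c\int_0^\infty R_0^+(\lambda^{2m})\,v\,\big(M^+(\lambda)\big)^{-1}\,v\,\big[R_0^+(\lambda^{2m})-R_0^-(\lambda^{2m})\big]\,\lambda^{2m-1}\,d\lambda .
\end{equation*}
The first point to exploit is the partial-fractions identity $((-\Delta)^m-\lambda^{2m}\mp i0)^{-1}=\sum_{j=0}^{m-1}c_j\,(-\Delta-\lambda^2e^{\pm2\pi ij/m})^{-1}$: only the summand $j=0$ is the outgoing/incoming Helmholtz resolvent, with the oscillatory, slowly decaying kernel $\sim e^{\pm i\lambda|x-y|}|x-y|^{2-n}$, whereas for each $j\neq 0$ the momentum has strictly positive imaginary part and the kernel is exponentially decaying and smoothing. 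Thus, up to harmless exponentially localized terms, the analysis is governed by Helmholtz kernels of frequency $\lambda$, exactly as in the second-order theory.

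Next I would insert smooth energy cutoffs, writing $W_+-I=\Omega_{\mathrm{low}}+\Omega_{\mathrm{mid}}+\Omega_{\mathrm{high}}$ supported in $\lambda<\lambda_0$, $\lambda_0\leq\lambda\leq\lambda_1$, and $\lambda>\lambda_1$. For $\Omega_{\mathrm{mid}}$, the limiting absorption principle makes $R_0^{\pm}(\lambda^{2m})$ uniformly bounded between suitable weighted $L^2$ spaces on $[\lambda_0,\lambda_1]$, $vR_0^{\pm}(\lambda^{2m})v$ is compact and norm-continuous (indeed smooth) in $\lambda$ there, and hypothesis~(iv)---no positive eigenvalues together with regularity of zero energy---forces $M^+(\lambda)$ to be invertible on the interval with uniformly bounded inverse (Fredholm alternative plus compactness of $[\lambda_0,\lambda_1]$); integrating by parts in $\lambda$ then yields an operator whose kernel decays rapidly in both variables, hence is bounded on $L^p$ for all $1\leq p\leq\infty$. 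For $\Omega_{\mathrm{high}}$, hypotheses (i)--(iii) guarantee $\|vR_0^{\pm}(\lambda^{2m})v\|_{2\to 2}\to 0$ as $\lambda\to\infty$, so $(M^+(\lambda))^{-1}=U\sum_{k\geq0}\big(-UvR_0^+(\lambda^{2m})v\big)^k$ converges; substituting gives $\Omega_{\mathrm{high}}=\sum_{k\geq0}(-1)^k\Omega_{\mathrm{high},k}$, each $\Omega_{\mathrm{high},k}$ a multilinear oscillatory integral built from $k+1$ interlaced Helmholtz kernels. Integration by parts in $\lambda$ against the phase $\lambda(|x-x_1|+\cdots+|x_k-y|)$, followed by Young's and Schur's inequalities, should give $\|\Omega_{\mathrm{high},k}\|_{p\to p}\lesssim\rho^k$ with $\rho<1$ for every $1\leq p\leq\infty$; here the smallness comes from the high-energy cutoff and the decay of $V$, not from smallness of $V$ itself, so this regime mirrors the high-energy part of the proof of Theorem~\ref{thm:small}, and summing the geometric series closes it.

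The heart of the argument, and the reason the endpoints $p=1,\infty$ must be given up, is the low-energy piece $\Omega_{\mathrm{low}}$. Because $n>2m$ the free resolvent has a finite limit $R_0^+(0)=(-\Delta)^{-m}$ in appropriate weighted $L^2$ spaces, and the key input is the asymptotic expansion of $R_0^+(\lambda^{2m})$ as $\lambda\to 0^+$---a higher-order counterpart of the Jensen--Kato / Jensen--Nenciu expansions---whose terms carry powers of $\lambda$ and of $\lambda^{n-2m}$, with additional $\log\lambda$ factors precisely when $n$ is even; this is the source of the odd/even dichotomy in the hypotheses. Correspondingly $M^+(\lambda)=M^+(0)+E(\lambda)$ with $E(\lambda)\to 0$, and since zero energy is regular, $M^+(0)=U+v(-\Delta)^{-m}v$ is invertible on the range of $v$---the equivalence of the stated absence of distributional solutions of $H\psi=0$ with $\la x\ra^{\frac{n}{2}-2m-}\psi\in L^2$ and the invertibility of $M^+(0)$ being the $(-\Delta)^m$-version of the classical lemma of Jensen--Nenciu. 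A Neumann expansion about $(M^+(0))^{-1}$ then produces $(M^+(\lambda))^{-1}$ with an expansion inheriting the same power/log structure. Feeding this back into the formula above, and using that for $\lambda$ small the oscillation $e^{\pm i\lambda^2|x-y|}$ is essentially trivial so that the relevant kernels reduce to Riesz potentials of order $2m$ and Riesz transforms (times rapidly decaying cutoffs coming from $v$), one finds that $\Omega_{\mathrm{low}}$ is---up to operators manifestly bounded on every $L^p$---a zero-order operator of Calder\'on--Zygmund type together with better-behaved remainders; such operators are bounded on $L^p$ for every $1<p<\infty$ but not at $p=1$ or $p=\infty$, which is exactly the gap between Theorem~\ref{thm:main} and Theorem~\ref{thm:small}. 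I expect the principal difficulties to lie here: (a) making rigorous the small-$\lambda$ expansions of $R_0^+(\lambda^{2m})$ and of $(M^+(\lambda))^{-1}$, with the mapping properties of $v(-\Delta)^{-m}v$ and of the remainders controlled uniformly under hypothesis~(i) (and under (ii), (iii) in the borderline dimension $n=4m-1$ and the large-dimensional regime $n>4m-1$); and (b) in even dimensions, absorbing the logarithmic factors---innocuous on $L^p$ for $1<p<\infty$ but genuinely obstructive at the endpoints---while gluing the three energy regimes with uniformly controlled constants.
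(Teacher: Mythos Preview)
Your high-energy strategy has a genuine gap. You propose to Neumann-expand $(M^+(\lambda))^{-1}$ for $\lambda>\lambda_1$ and sum the resulting infinite Born series, asserting $\|\Omega_{\mathrm{high},k}\|_{p\to p}\lesssim\rho^k$ with $\rho<1$ ``coming from the high-energy cutoff.'' But the paper's Born-series estimate (Theorem~\ref{thm:Born}) gives $\|W_J\|_{p\to p}\le C^J\|V\|_*^J$ with $C=C(n,m,\delta)$ independent of any energy threshold: in that proof the $\lambda$-integral is evaluated by Cauchy's formula at the outset, and the spectral parameter disappears from the subsequent estimates, so restricting to $\lambda>\lambda_1$ does not shrink the constant. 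Summing your series therefore requires $\|V\|_*<C^{-1}$, i.e.\ smallness of $V$, which is precisely what Theorem~\ref{thm:main} does \emph{not} assume. The observation that $\|v\mR_0^\pm(\lambda^{2m})v\|_{2\to2}\to0$ gives $L^2$ convergence of the Neumann series but says nothing about $L^p$.

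The paper circumvents this by organizing the expansion differently: it iterates only finitely many times,
\[
\mR_V=\sum_{J=0}^{2\ell}(-1)^J(\mR_0 V)^J\mR_0-(\mR_0 V)^\ell\mR_V(V\mR_0)^\ell,
\]
bounds each of the $2\ell+1$ Born terms individually on $L^p$ via Theorem~\ref{thm:Born} (no infinite sum to control, so no smallness needed), and then treats the tail---which still contains $\mR_V$---separately. At high energy the tail is handled by the limiting absorption principle for $\mR_V$ (Theorem~\ref{thm:lap}): with $\ell$ large enough, the accumulated $\lambda$-decay from $2\ell+1$ applications of LAP makes the $\lambda$-integral absolutely convergent after $\tfrac{n+1}{2}$ or $\tfrac{n+3}{2}$ integrations by parts, and the resulting kernel satisfies pointwise bounds of the form $\la x\ra^{-(n-1)/2}\la y\ra^{-(n-1)/2}\la|x|-|y|\ra^{-N}$ (Lemma~\ref{lem:ptwise kernel}). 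At low energy the symmetric resolvent identity and invertibility of $M^+(0)$ are used much as you sketch, but the paper proceeds by direct kernel estimates (Lemma~\ref{lem:low tail low d}) rather than Calder\'on--Zygmund theory; the loss of the endpoints arises from a logarithmic factor $\log r_2/r_2^n$ in those kernel bounds, not from a CZ-type singular integral.
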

Finally, with slightly more decay on the potential we recover the endpoints $p=1,\infty$ in odd dimensions:
\begin{theorem}\label{thm:main2}
		Let $n>2m$ be odd. Assume that $V$ satisfies the hypothesis of Theorem~\ref{thm:main} and in addition  $|V(x)|\les \la x\ra^{-\beta}$ for some $\beta>n+5$.  
	Then,	the wave operators extend to bounded operators on $L^p(\R^n)$ for all $1\leq p\leq \infty$.  
 \end{theorem}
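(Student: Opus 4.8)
The plan is to promote the range $1<p<\infty$ of Theorem~\ref{thm:main} to the full range $1\le p\le\infty$ by showing that, under the extra decay $|V(x)|\lesssim\langle x\rangle^{-\beta}$ with $\beta>n+5$, the operators $W_{\pm}$ extend to bounded operators on $L^1(\R^n)$ and on $L^\infty(\R^n)$; Riesz--Thorin interpolation with these endpoints then gives boundedness on $L^p$ for every $1\le p\le\infty$. Since $W_{\pm}^{*}$ is, via the intertwining identities, again a wave-operator-type object built from $H$ and $H_0$, it is enough to establish for the integral kernel $K_{\pm}(x,y)$ of $W_{\pm}-I$ the two Schur bounds
\begin{equation*}
\sup_{x\in\R^n}\int_{\R^n}|K_{\pm}(x,y)|\,dy\lesssim 1,\qquad \sup_{y\in\R^n}\int_{\R^n}|K_{\pm}(x,y)|\,dx\lesssim 1.
\end{equation*}
The starting point is the stationary representation coming from Stone's formula,
\begin{equation*}
W_{\mp}-I\;=\;\mp\frac{1}{2\pi i}\int_0^\infty R_V^{\pm}(\lambda^{2m})\,V\,\big[R_0^{+}(\lambda^{2m})-R_0^{-}(\lambda^{2m})\big]\,\lambda^{2m-1}\,d\lambda,
\end{equation*}
after the substitution trivializing the symbol of $(-\Delta)^m$, together with the partial-fraction identity
\begin{equation*}
\big((-\Delta)^m-\lambda^{2m}\big)^{-1}=\frac{1}{m\,\lambda^{2m-2}}\sum_{\ell=0}^{m-1}\omega^{2\ell}\,\big((-\Delta)-\omega^{2\ell}\lambda^2\big)^{-1},\qquad \omega=e^{\pi i/m},
\end{equation*}
which splits $R_0$ into the one genuine Schr\"odinger resolvent at energy $\lambda^2$ plus $m-1$ ``elliptic'' resolvents at the complex energies $\omega^{2\ell}\lambda^2$, $\ell\neq 0$, whose kernels and $\lambda$-derivatives decay like $e^{-c_\ell\lambda|x-y|}$ and are harmless in every estimate below. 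The factor $\lambda^{2-2m}$ partially cancels the weight $\lambda^{2m-1}$, and for each Born term one is left with an oscillatory integral in $\lambda$ whose phase is a signed sum of the distances $|x-y_1|,|y_1-y_2|,\dots,|y_k-y|$.

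Next I insert a fixed low/high energy cutoff $\chi(\lambda)+\widetilde\chi(\lambda)\equiv 1$, with $\chi$ supported in $\{0\le\lambda\le\lambda_0\}$, and treat the two regimes separately. In the low-energy regime the oddness of $n$ is decisive: the free resolvent of $-\Delta-\lambda^2$ is then a \emph{finite} combination of terms $\lambda^{j}|x-y|^{2+j-n}e^{\pm i\lambda|x-y|}$, with no Hankel-function tail and no logarithms, and, zero energy being regular by hypothesis~(iv) of Theorem~\ref{thm:main}, the operator $M(\lambda)=U+vR_0(\lambda^{2m})v$ (with $V=U|V|$, $v=|V|^{1/2}$) is invertible near $\lambda=0$ with a norm-convergent expansion in $\lambda$. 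Feeding $R_V=R_0-R_0\,v\,M(\lambda)^{-1}\,v\,R_0$ into the representation reduces the low-energy part of $W_{\pm}-I$ to finitely many explicit oscillatory integrals, each estimated by pointwise bounds on $R_0^{\pm}(\lambda^{2m})(x,y)$ and a fixed number of $\lambda$-integrations by parts; the decay $|V(x)|\lesssim\langle x\rangle^{-\beta}$ with $\beta>n+5$ is precisely what absorbs the weights $\langle x\rangle,\langle y\rangle$ produced by those integrations by parts while still leaving a kernel integrable in both $x$ and $y$. In the high-energy regime I iterate the resolvent identity, $R_V^{\pm}=\sum_{k=0}^{K-1}R_0^{\pm}(-VR_0^{\pm})^k+(-1)^K R_0^{\pm}(VR_0^{\pm})^{K-1}VR_V^{\pm}$ for a large fixed $K=K(n,m)$: each finite Born term $W_{\pm}^{(k)}$ is a $\lambda$-oscillatory integral controlled by non-stationary phase away from the resonant set where the chain of distances telescopes to zero, while the tail term is handled by combining the high-energy resolvent bounds for $R_V^{\pm}$ on weighted $L^2$ spaces --- available up to $\lambda=\infty$ because $H$ has no positive eigenvalues, and relying on the Fourier-type conditions (ii)--(iii) of Theorem~\ref{thm:main} when $n\ge 4m-1$ --- with further $\lambda$-integrations by parts, which for $(-\Delta)^m$ produce enough negative powers of $\lambda$ to make the integral absolutely convergent.

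The technical heart --- and the place where the two extra powers of decay over Theorem~\ref{thm:main} are spent --- is the behaviour near the resonant set, in particular the leading Born term
\begin{equation*}
W_{\pm}^{(1)}\sim\int_0^\infty R_0^{\pm}(\lambda^{2m})\,V\,\big[R_0^{+}(\lambda^{2m})-R_0^{-}(\lambda^{2m})\big]\,\lambda^{2m-1}\,d\lambda
\end{equation*}
and the first few high-energy pieces, where the phase $\lambda\big(|x-y_1|\pm|y_1-y|\big)$ is allowed to vanish. There the free oscillatory contributions are of Calder\'on--Zygmund / H\"ormander--Mikhlin type --- bounded on $1<p<\infty$ but genuinely \emph{unbounded} on $L^1$ and $L^\infty$ --- so they cannot be estimated in absolute value. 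As in the second-order theory, the remedy is to carry the phase and Born expansions far enough that the surviving singular contributions either telescope against one another or pick up an extra factor $\langle x\rangle^{-1}$, $\langle y\rangle^{-1}$ or $|x-y|^{-1}$ beyond the critical homogeneity, converting them into kernels that do satisfy the Schur bounds; keeping this uniform in $x$ and $y$ while bookkeeping all the weights against $\beta>n+5$ is the main obstacle. Once it is overcome, summing the finitely many low- and high-energy pieces yields $\sup_x\int|K_{\pm}(x,y)|\,dy+\sup_y\int|K_{\pm}(x,y)|\,dx<\infty$, hence the boundedness of $W_{\pm}$ on $L^1$ and on $L^\infty$, and interpolation delivers the full range $1\le p\le\infty$.
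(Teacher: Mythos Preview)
Your proposal misidentifies where the endpoint obstruction lies. You assert that the leading Born terms near the ``resonant set'' are of Calder\'on--Zygmund type, unbounded on $L^1$ and $L^\infty$, and that the extra decay is spent telescoping these singular contributions against one another. But Theorem~\ref{thm:Born} already shows that \emph{every} Born term $W_J$ is bounded on $L^p$ for all $1\le p\le\infty$ under the hypotheses of Theorem~\ref{thm:main}; no telescoping is needed and nothing singular obstructs the endpoints there. Likewise the high-energy tail already reaches $1\le p\le\infty$ (Proposition~\ref{prop:high odd}): one simply integrates by parts $\tfrac{n+3}{2}$ times rather than $\tfrac{n+1}{2}$ to land on an admissible kernel via Lemma~\ref{lem:ptwise kernel}, and this extra integration by parts---through the limiting absorption bound on $\partial_\lambda^{(n+3)/2}\mR_V$---is where $\beta>n+5$ is actually spent.

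The real gap between $1<p<\infty$ and $1\le p\le\infty$ is the \emph{low-energy tail}. In Lemma~\ref{lem:low tail low d} the pieces $K_3,K_4$ (where one of $r_1,r_2$ dominates the other) produce kernels of size $\log(r)/r^{n}$, which are bounded on $L^p$ only for $1<p<\infty$. Your low-energy sketch---pointwise bounds on $\mR_0^\pm$ plus a fixed number of integrations by parts---is exactly what Section~\ref{sec:low} already does, and it cannot do better. The missing ingredient is a structural cancellation: the difference $\mR_0^+-\mR_0^-$ equals $\lambda^{n-2m}\widetilde F(\lambda|y-u|)$ with $\widetilde F$ entire and decaying like $\la r\ra^{(1-n)/2}$ (Lemma~\ref{lem:R0cancel}), far better behaved than either $\mR_0^\pm$ alone. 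Only by exploiting this on one side, together with the explicit polynomial-times-exponential form of the odd-dimensional $\mR_0^+$ on the other, can one push the integrations by parts far enough to render $K_3,K_4$ admissible (Lemma~\ref{lem:low tail low d k3k4}). Your proposal does not invoke this cancellation, and without it the low-energy argument stalls at $1<p<\infty$.
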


	In even dimensions, we lose the boundedness on the endpoints of $p=1,\infty$ due to the low energy.  In particular, the energies away from zero are bounded on the full range including $p=1,\infty$, see Proposition~\ref{prop:hi even} below.  We hope to address the cases of $p=1,\infty$ when $n>2m$ even and the case when there are threshold obstructions  in a  future work.

	We note that the norm used when $n>4m-1$ is finite when $\la x\ra^{\sigma}V(x)$ has more than $\frac{n}{n-2m}(\frac{n-4m+1}{2})$ derivatives in $L^2(\R^n)$.  In all cases above, we also note that
	$$
		\| V\|_{L^2(B(x,1))}\les \la x\ra^{-1-}, \quad x\in \R^n.
	$$
	This suffices to imply, \cite{Sche,agmon,ScheArb}, the existence, asymptotic completeness, and intertwining identity for the wave operators.
	In particular, we have
	\begin{align}\label{eq:intertwining}
	f(H)P_{ac}(H)=W_\pm f((-\Delta)^m)W_{\pm}^*.
	\end{align}
	Here $P_{ac}(H)$ is the projection onto the absolutely continuous spectral subspace of $H$, and $f$ is any Borel function.  Using \eqref{eq:intertwining} one may obtain $L^p$-based mapping properties for the more complicated, perturbed operator $f(H)P_{ac}(H)$ from the simpler free operator $f((-\Delta)^m)$.  The boundedness of the wave operators on $L^p(\R^n)$ for any choice of $p\geq 2$ with the function $f(\cdot)=e^{-it(\cdot)}$ yield the dispersive estimate
	\begin{align}\label{eqn:disp est}
	\|e^{-itH}P_{ac}(H)\|_{L^{p'}\to L^p}\les |t|^{-\frac{n}{2m}+\frac{n}{pm} },
	\end{align}
	where $p'$ is the H\"older conjugate of $p$.    In particular in all odd dimensions $n>2m$, under the hypothesis of Theorem~\ref{thm:main2}, we have 
$$
	\|e^{-itH}P_{ac}(H)\|_{L^{1}\to L^\infty}\les |t|^{-\frac{n}{2m} }. 
$$	 
	Our work is inspired by recent work by Feng, Soffer, Wu and Yao on weighted $L^2$-based ``local dispersive estimates" for higher order Schr\"odinger operators considered in \cite{soffernew}, as well as the recent work on the $L^p(\R^3)$ boundedness of the wave operators for the fourth order ($m=2$) Schr\"odinger operators by Goldberg and the second author \cite{GG4wave}, and the extensive works of Yajima, \cite{YajWkp1,YajWkp2,YajWkp3, Yaj,YajNew},  in the case of $m=1$.  The wave operators for the usual Schr\"odinger operator $-\Delta+V$, when $m=1$ are well-studied, see for example \cite{YajWkp1,YajWkp2,YajWkp3,JY2,JY4,DF,Miz} in all dimensions $n\geq 1$.  On $\mathbb R^3$, Beceanu and Schlag  obtained detailed structure formulas for the wave operators, \cite{Bec,BS,BS2}.  The $L^2$ existence and other properties of the higher order wave operators have been studied by many authors, including Agmon \cite{agmon}, Kuroda \cite{Kur1, Kur2}, H\"ormander \cite{Hor}, and Schechter, \cite{Sche,ScheArb}.  We note that the only\footnote{During the review period of this article Mizutani, Wan and Yao proved results for the case of $m=2$ and $n=1$, \cite{MWY}.} result on the $L^p$ boundedness of the wave operators for higher order Schr\"odinger operators is the case of $m=2$ and $n=3$ by Goldberg and the second author, \cite{GG4wave}.   There appears to be three regimes in the analysis of $L^p$ boundedness of the wave operators: $n<2m$, $n=2m$, and $n>2m$. In the case $n <2m$, as in \cite{GG4wave},  zero energy is not regular for the free operator and the main difficulty in the analysis is the small energies.  However the large energy argument is more straightforward since the resolvent  decays in the spectral parameter $\lambda$. 
	In the range $n>2m$ the zero energy is regular for the free operator and the resolvent remains bounded as $\lambda\to 0$. However, the large energies, and in particular the Born series terms, are not easy to deal with. When $n> 4m-1$ one needs a smoothness requirement on the potential $V$ as in the case $m=1$ and $n>3$, \cite{YajWkp1,GV}, due to the growth of the resolvents as the spectral variable goes to infinity.  The case $n=2m$ is challenging in both the low and high energy regimes.

	Similar to the usual second order Schr\"odinger operator, for the types of potentials we consider there is a Weyl criterion and $\sigma_{ac}(H)=\sigma_{ac}(H_0)=[0,\infty)$.  In contrast, decay of the potential is not sufficient to ensure the lack of eigenvalues embedded in the continuous spectrum for the higher order operators, \cite{soffernew}.  Even perturbing with compactly supported, smooth potentials may induce embedded eigenvalues.  We leave this as an overarching assumption and note that there are conditions that ensure the lack of embedded eigenvalues, see Theorem~1.11 in \cite{soffernew}.  
	
	To prove Theorem~\ref{thm:main} we use a time-independent representation of the wave operators based on resolvent operators.  We have the splitting identity for $z\in\C\setminus[0,\infty)$, (c.f. \cite{soffernew})
	\be\label{eqn:Resol}
	\mR_0(z)(x,y):=((-\Delta)^m -z)^{-1}(x,y)=\frac{1}{ mz^{1-\frac1m} }
	\sum_{\ell=0}^{m-1} \omega_\ell R_0 ( \omega_\ell z^{\frac1m})(x,y)
	\ee
	where $\omega_\ell=\exp(i2\pi \ell/m)$ are the $m^{th}$ roots of unity, $R_0(z)=(-\Delta-z)^{-1}$ is the usual ($2^{nd}$ order) Schr\"odinger resolvent.  
	Using the change of variables $z= \lambda^{2m}$ with $\lambda$ restricted to the sector in the complex plane with $0<\arg(\lambda)<\pi/m$,
	\be\label{eqn:Resolv}
	\mR_0( \lambda^{2m})(x,y):=((-\Delta)^m -\lambda^{2m})^{-1}(x,y)=\frac{1}{ m\lambda^{2m-2}}
	\sum_{\ell=0}^{m-1} \omega_\ell R_0 ( \omega_\ell  \lambda^2)(x,y).
	\ee
	By the well-known Bessel function expansions, for $n>3$ odd we have
	\be\label{eqn:R0 explicit0}
	R_0(z^2)(x,y)=\frac{e^{iz |x-y|}}{|x-y|^{n-2}} \sum_{j=0}^{\frac{n-3}{2}} c_{n,j} |x-y|^j z^{j}, \qquad \Im(z)>0.
	\ee   
	Even dimensions are more complicated due to the appearance of logarithmic terms.
	
	Our usual starting point to study the wave operators is the stationary representation 
	\begin{align*}
	W_+u
	&=u-\frac{1}{2\pi i} \int_{0}^\infty \mR_V^+(\lambda) V [\mR_0^+(\lambda)-\mR_0^-(\lambda)] u \, d\lambda,
	\end{align*}
	where $\mR_V(\lambda)=((-\Delta)^{m}+V-\lambda)^{-1}$, where the `+' and `-' denote the usual limiting values as $\lambda$ approaches the positive real line from above and below, \cite{soffernew}.
	Since the identity operator is bounded on $L^p$, we need only bound the second term involving the integral.  It is convenient to make the change of variables $\lambda \mapsto\lambda^{2m}$ and consider the integral kernel of the operator
	\begin{align}\label{eqn:wave op defn}
	-\frac{m}{\pi i} \int_0^\infty \lambda^{2m-1}\mR_V^+(\lambda^{2m})V[\mR_0^+-\mR_0^-](\lambda^{2m})\, d\lambda.
	\end{align}
	Our result in Theorem~\ref{thm:small} follows by using resolvent identities to expand $\mR_V^+$ in an infinite series 
	and directly summing the series.  To remove the smallness assumption to show that the operator defined in \eqref{eqn:wave op defn} extends to a bounded operator on $L^p$ requires different strategies in the low ($0<\lambda\ll 1$) and high ($\lambda \gtrsim 1$) energy regimes.  To delineate these cases, we use the even, smooth cut-off function $\chi$ with $\chi(\lambda)=1$ for $|\lambda|<\lambda_0$ for some sufficiently small $\lambda_0\ll 1$, and $\chi(\lambda)=0$ for $|\lambda|>2\lambda_0$, as well as the complimentary cut-off $\widetilde \chi(\lambda)=1-\chi(\lambda)$.
	
	We note that the different assumptions on the potential we impose based on the size of $n$ versus $m$ are natural.  When $n\leq 2m$ the low energy expansions of the resolvent $\mR_0$ are singular as the spectral parameter $\lambda \to 0$.  This complication necessitates a different strategy to invert certain operators and develop expansions for both the free and perturbed resolvents, see \cite{GT4,egt} for the case when $m=2$ and $n=4,3$ respectively.    
	Smoothness of the potential is required for the second order Schr\"odinger  operator  in dimensions $n>3$ since the kernel free resolvent $R_0^\pm(\lambda^2)$ grows like $\lambda^{\frac{n-3}{2}}$ as the spectral parameter $\lambda \to \infty$.  This causes the $L^1\to L^\infty$ dispersive estimates to fail in dimensions greater than three without some smoothness assumptions on the potential, see the counterexample constructed by Goldberg and Visan \cite{GV}.     The higher order Schr\"odinger resolvent,  $\mR_0(\lambda^{2m})$ grows like $\lambda^{\frac{n+1}{2}-2m}$ when $n> 4m-1$, which necessitates a control over derivatives of the potential which we measure in terms of the $\mathcal F L^r$ norm similar to the conditions for the second order Schr\"odinger established by Yajima, \cite{YajWkp1}. Our $\epsilon$-smoothness requirement in the case $n=4m-1$ could be an artifact of our methods.  
	
	We assume that zero energy is regular, that is there are no threshold resonances or eigenvalues.  These can be characterized in terms of distributional solutions to $H\psi=0$, with $\psi$ in weighted $L^2(\R^n)$ spaces, see section 8 of \cite{soffernew}.  The effect of zero energy resonances or eigenvalues on the $L^p$-boundedness of the wave operators is well-studied in for $m=1$ Schr\"odinger operator.  Generically, one sees the range shrink to $1<p<\frac{n}{2}$ when $n\geq 3$, while further orthogonality conditions allows one to obtain a larger range.  See, for example the work of Yajima \cite{YajNew,YajNew2,YajNew3}, also Goldberg and the second author \cite{GGwaveop}.  In the higher order case, one would expect the wave operators to be bounded for $1<p<\frac{n}{2m}$ in the presence of zero energy eigenvalues when $n>4m$, with a larger upper bound on the range of $p$ when $2m<n\leq 4m$ or in the case of resonances, or sufficient cancellation properties between the potential and zero energy eigenspace.  In these cases only the bounds on the low energy portion of the tail of the Born series would be affected.  The effect of embedded eigenvalues has no analogue in the $m=1$ case, its effect on the $L^p$-boundedness of the wave operators is unknown.

	The paper is organized as follows.  We first control the Born series terms that arise by iterating the resolvent identity for the perturbed resolvent in the stationary representation, \eqref{eqn:wave op defn}, of the wave operator in Section~\ref{sec:Born}.   Next, we  prove Theorem~\ref{thm:main} and Theorem~\ref{thm:main2}. First in odd dimensions, in Section~\ref{sec:low} and Section~\ref{sec:low2}, we control the remainder in the low energy regime, when the spectral parameter $\lambda$ is in a neighborhood of zero.  In Section~\ref{sec:high} we control the remainder in the high energy regime, when $\lambda\gtrsim 1$ in odd dimensions.  In Section~\ref{sec:even} we show how the arguments in Sections~\ref{sec:low} and \ref{sec:high} may be adapted to the even dimensional case.  Finally, in Section~\ref{sec:int ests} we provide integral estimates that are used throughout the paper.

	\section{Born Series}\label{sec:Born}

	By iterating the resolvent identity, one has the expansion
	\begin{align}\label{eqn:born identity}
	\mR_V(z)=\sum_{J=0}^{2\ell}\big[ \mR_0(z)(-V\mR_0(z))^J \big]- (\mR_0(z)V)^\ell \mR_V(z) (V\mR_0(z))^\ell.
	\end{align}
	Consider the contribution of an arbitrary summand in the Born series to \eqref{eqn:wave op defn},
	$$
	W_J:=(-1)^{J+1}\frac{1}{2\pi i}\int_0^\infty   (\mR_0^+(\lambda)V)^{J} [\mR_0^+(\lambda)-\mR_0^-(\lambda)]\, d\lambda.
	$$
	In this section by modifying the proof of Yajima  in \cite{YajWkp1} to control the Born series terms for the second order Schr\"odinger, we prove that $W_J$
	extends to a bounded operator on $L^p(\R^n)$, $1\leq p\leq \infty$:
	\begin{theorem}\label{thm:Born}   Fix $1\leq p\leq \infty$ and $0<\delta\ll 1$. Then $\exists C=C(\delta,n,m)>0$ so that
		for $2m<n<4m-1$,  we have  
		$$\|W_J\|_{L^p\to L^p}\leq C^J    \| \la \cdot \ra^{ \frac{4m+1-n}{2}+\delta} V(\cdot )\|_{L^2}^J, $$
		for $n=4m-1$, we have 
		$$\|W_J\|_{L^p\to L^p}\leq C^J  \|  \la x\ra^{1+\delta} V\|_{H^{ \delta}}^{J}, $$
		for $n>4m-1$, we have 
		$$\|W_J\|_{L^p\to L^p}\leq C^J  \| \mF(\la x\ra^{ \frac{2n-4m}{n-1-\delta}+\delta} V)\|_{L^{\frac{n-1-\delta}{n-2m-\delta}}}^J.$$
	\end{theorem}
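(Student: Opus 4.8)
The strategy is to estimate the Schwartz kernel $K_J(x,y)$ of $W_J$ and then run a Schur test in both variables: controlling $\sup_x\int|K_J(x,y)|\,dy$ and $\sup_y\int|K_J(x,y)|\,dx$ yields boundedness on $L^1$ and $L^\infty$, hence on all $L^p$, $1\le p\le\infty$, by interpolation. First I would pass to the variable $\lambda\mapsto\lambda^{2m}$, insert the representation \eqref{eqn:Resolv}, and split the $\lambda$-integral defining $W_J$ into the low-energy region $\lambda\lesssim\lambda_0$ and the high-energy region $\lambda\gtrsim\lambda_0$. These two pieces demand genuinely different treatments, but only the high-energy piece feels the dimensional conditions on $V$.

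In the low-energy region one must not break \eqref{eqn:Resolv} into its $m$ summands, since each $\lambda^{-(2m-2)}R_0(\omega_\ell\lambda^2)$ is singular at $\lambda=0$; instead one uses that, because $n>2m$, the full resolvent $\mR_0^\pm(\lambda^{2m})(x,y)$ stays bounded as $\lambda\to0$, in fact converging to the kernel $c_{n,m}|x-y|^{2m-n}$ of $(-\Delta)^{-m}$, with $|\mR_0^\pm(\lambda^{2m})(x,y)|\lesssim|x-y|^{2m-n}$ for $\lambda\lesssim1$, while the spectral density $[\mR_0^+-\mR_0^-](\lambda^{2m})=\tfrac1{m\lambda^{2m-2}}[R_0^+-R_0^-](\lambda^2)$ vanishes like $\lambda^{n-2m}$. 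The cut-off $\chi(\lambda)$ then makes the $\lambda$-integral over the $J$-fold product absolutely convergent, and when the combined spatial phase $\Phi_\pm=\sum_{k=1}^{J}|y_{k-1}-y_k|\pm|y_J-y|$ (with $y_0:=x$) is large one gains $\langle\Phi_\pm\rangle^{-N}$ by nonstationary phase in $\lambda$; the resulting kernel is a $J$-fold spatial integral of $\prod_k|y_{k-1}-y_k|^{2m-n}V(y_k)$ against $|y_J-y|^{2m-n}\langle\Phi_\pm\rangle^{-N}$, which is estimated by the integral lemmas of Section~\ref{sec:int ests}. This part is routine and uniform over the three regimes.

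The high-energy region is where Yajima's treatment of the second-order Born terms \cite{YajWkp1} is adapted. Now the prefactor $\lambda^{-(2m-2)}$ is harmless, so one breaks each factor $\mR_0^+(\lambda^{2m})$ in $(\mR_0^+(\lambda^{2m})V)^J$ into its $m$ pieces: the $\ell=0$ piece $\tfrac1{m\lambda^{2m-2}}R_0^+(\lambda^2)$ is genuinely oscillatory, while the pieces $\ell\ne0$ carry $e^{i\sqrt{\omega_\ell}\lambda|x-y|}$ with $\Im\sqrt{\omega_\ell}>0$, an exponential damping in $\lambda|x-y|$; any term containing at least one damped factor obeys the required bounds with room to spare, so it suffices to treat the main term in which every factor is the $\ell=0$ piece (the spectral density again reducing to $\tfrac1{m\lambda^{2m-2}}[R_0^+-R_0^-](\lambda^2)$). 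Inserting \eqref{eqn:R0 explicit0} in odd dimensions — each resolvent becoming $e^{\pm i\lambda|x-y|}|x-y|^{2-n}$ times a polynomial of degree $\tfrac{n-3}{2}$ in $\lambda|x-y|$ — and the corresponding Hankel asymptotics in even dimensions, the contribution to $K_J$ is a finite sum of integrals $\int_{\R^{nJ}}\prod_k|y_{k-1}-y_k|^{2-n}V(y_k)\cdot|y_J-y|^{2-n}\,\mathcal I_\pm\,dy_1\cdots dy_J$ with $\mathcal I_\pm=\int_0^\infty\widetilde\chi(\lambda)\,\lambda^{b}e^{i\lambda\Phi_\pm}a(\lambda)\,d\lambda$, $a$ a product of symbols of order $\tfrac{n-3}{2}$. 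The net order of $\lambda^{b}a(\lambda)$ is governed by $\tfrac{n+1}{2}-2m$, the high-energy growth rate of $\mR_0^\pm(\lambda^{2m})$: for $2m<n<4m-1$ it is negative, so repeated integration by parts in $\lambda$ (using $\Phi_+\ge|x-y|$ for the $+$ sign) gives $|\mathcal I_\pm|\lesssim\langle\Phi_\pm\rangle^{-N}$ and only the $L^2$-weighted decay of $V$ is consumed; for $n=4m-1$ it is borderline and an $\varepsilon$ of Sobolev smoothness of $\langle\cdot\rangle^{1+}V$ supplies the missing integrability; for $n>4m-1$ the amplitude genuinely grows and integration by parts fails, so — exactly as for $-\Delta+V$ in $n\ge4$, where smoothness is unavoidable by the Goldberg--Visan example \cite{GV} — the growing powers of $\lambda$ are transferred onto $V$ by pairing them against $\widehat{\langle\cdot\rangle^\sigma V}$ and invoking Hausdorff--Young together with Young's inequality, which is precisely what produces the $\mathcal F L^{\frac{n-1-\delta}{n-2m-\delta}}$ condition. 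One then peels off the $V$-factors one at a time — by Cauchy--Schwarz in the $L^2$-based regimes $n\le4m-1$, by Hausdorff--Young and Young's inequality when $n>4m-1$ — each step costing one factor of the stated norm $\|V\|_*$ and a constant depending only on $n,m,\delta$; the remaining iterated spatial integrals of the $|y_{k-1}-y_k|^{2-n}$ singularities against $\langle\Phi_\pm\rangle^{-N}$ are bounded uniformly in $J$ by Section~\ref{sec:int ests}, so the overall bound is geometric, $C^J\|V\|_*^J$, and Schur closes the argument.

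I expect the main obstacle to be the $\Phi_-$ contribution. Unlike $\Phi_+\ge|x-y|$, the phase $\Phi_-=\sum_{k}|y_{k-1}-y_k|-|y_J-y|$ vanishes on a hypersurface — already for $J=1$, on $\{|x-y_1|=|y_1-y|\}$ — so $\mathcal I_-$ does not decay there and is meaningful only through the $+i0$ prescription inherited from $\mR_0^+$; one must show that the resulting distributional kernel still integrates acceptably against the spatial singularities $\prod_k|y_{k-1}-y_k|^{2-n}V(y_k)\cdot|y_J-y|^{2-n}$, uniformly in $J$ and with a geometric constant. Together with the Fourier-analytic bookkeeping required to distribute the roughly $\tfrac{n+1}{2}-2m$ growing powers of $\lambda$ per resolvent onto the potentials in the regimes $n\ge4m-1$ while keeping the series summable, this is the technical heart of the theorem.
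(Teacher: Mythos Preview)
Your route is genuinely different from the paper's, and the difference is not cosmetic.

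The paper does \emph{not} split the Born terms into low and high energy, does not work with the physical-side phases $\Phi_\pm$, and does not attempt a Schur test on $K_J(x,y)$. Instead it follows Yajima's Fourier-side scheme: passing to the adjoint $Z_J=W_J^*$, taking the Fourier transform in $x$, and applying Cauchy's integral formula to the $\lambda$-integral. This last step is the crucial move: it collapses the spectral factor $[\mR_0^+-\mR_0^-]$ against one of the $\mR_0^+$'s and produces $Z_J$ as an iterated composition of multiplier operators $T_k^m$ with symbol $\big(|\xi-k|^{2m}-|\xi|^{2m}-i0\big)^{-1}$. The algebraic factorization $|\xi-k|^{2m}-|\xi|^{2m}=(|\xi-k|^2-|\xi|^2)\,p_\omega(\xi/|k|)^{-1}$ then reduces matters to a first-order symbol in the direction $\omega=k/|k|$, and $T_k^m f$ is written explicitly as a convolution along half-lines weighted by the $L^1$ function $h_k=\mF^{-1}p_\omega$. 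After a reflection $x\mapsto\overline{x}$ and unfolding the $J$ layers, $\|Z_Jf\|_{L^p}$ is controlled by $\|F\|_{L^1((S^{n-1}\times\R^n\times\R)^J)}\|f\|_{L^p}$, and this $L^1$ norm is estimated via Hausdorff--Young in the radial Fourier variables $s_j$, Hardy's inequality, and multilinear interpolation. The specific norms $\|\la\cdot\ra^{(4m+1-n)/2+\delta}V\|_{L^2}$, $\|\la\cdot\ra^{1+\delta}V\|_{H^\delta}$, $\|\mF(\la\cdot\ra^\sigma V)\|_{L^{(n-1-\delta)/(n-2m-\delta)}}$ fall out directly from the exponent bookkeeping in that last step.

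What this buys over your approach: the $\Phi_-$ difficulty you flag simply does not arise, because the Cauchy residue step eliminates the $\lambda$-integral before any stationary-phase analysis is attempted. Your proposal is honest in identifying $\Phi_-$ as the crux, but it does not indicate how to resolve it, and historically this is exactly the obstruction that motivated Yajima's Fourier-side method for $m=1$. A second, smaller issue: your low-energy sketch leans on the pointwise integral lemmas of Section~\ref{sec:int ests}, which consume pointwise decay $|V(x)|\les\la x\ra^{-\beta}$; the theorem's hypotheses are strictly weaker $L^2$/$\mF L^r$ conditions, and it is not clear your scheme would recover those sharper norms. Finally, obtaining a genuinely geometric bound $C^J\|V\|_*^J$ from iterated physical-side convolutions with local singularities $|y_{k-1}-y_k|^{2m-n}$ is delicate; the paper's product structure in the $(s_j,\omega_j,y_j,t_j)$ variables makes the $C^J$ factor automatic.
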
	
	In what follows we will ignore most implicit constants; their affect on the final inequality is of the form $C^J$, where $C$ depends on $n,m$ and the actual value of the implicit small constants in the hypothesis above.  Theorem~\ref{thm:small} follows from this result.
	
	Our approach is inspired by the paper \cite{YajWkp1}, in which Yajima proved the result in the case of $m=1$. We will bound the adjoint operator $Z_J=W_J^*$.  Fix $f\in \mathcal S$  and let  
	\be\label{eqn:ZJ defn}
	Z_Jf(x)=\lim_{ \epsilon_1 \to 0^+}\cdots \lim_{\epsilon_J\to 0^+} \lim_{\epsilon_0\to 0^+} Z_{J,\vec \epsilon,\epsilon_0} f(x),
	\ee
	where 
	$$
	Z_{J,\vec \epsilon,\epsilon_0}f (x)
	:= \frac1{2\pi i} \int_\R \big[\mR_0(\lambda-i\epsilon_0) V\mR_0 (\lambda+i\epsilon_1) \cdots V\mR_0 (\lambda+i\epsilon_J) f\big](x) d\lambda.
	$$
	The main result of this sections is to show this operator is bounded on $L^p(\R^n)$ for all $1\leq p\leq \infty$. As in \cite{YajWkp1}, it suffices to prove that the limit above exists in $L^p$ and the bounds stated in the theorem hold  for $f\in \mathcal S$ and $\widehat V\in C_0^\infty$. 
	
	Taking the Fourier transform in $x$ yields, up to constants,
	\begin{align*}
	\mathcal  F(Z_{J,\vec \epsilon,\epsilon_0}f)(\xi)= \int_\R
	\int_{ \R^{Jn} }\frac{1}{\xi^{2m}-\lambda+i \epsilon_0}\prod_{j=1}^J \frac{\widehat V(k_j)}{(\xi-\sum_{\ell=1}^j k_\ell)^{2m}-\lambda -i\epsilon_j}\widehat f(\xi-\sum_{j=1}^J k_j)
	d\lambda.
	\end{align*}
	Applying Cauchy's integral formula to the $\lambda$ integral in the definition of $Z_J$  and taking $\epsilon_0\to 0^+$ yield
	$$
	\mathcal  F(Z_{J,\vec \epsilon}f)(\xi)=   \int_{\R^{Jn}}  \bigg[\prod_{j=1}^J \frac{\widehat V(k_j)} {  (|\xi-\sum_{\ell=1}^j k_\ell|^{2m}-|\xi|^{2m} - i\epsilon_j )} \bigg]\widehat f (\xi-\sum_{j=1}^J k_j) dk_1 \cdots d k_J.
	$$
	Now, we utilize the change of variables $\sum_{\ell=1}^j k_\ell \mapsto  k_j $ for $j=1,\ldots,J$ and define $k_0=0$ to obtain
	$$
	\mathcal  F(Z_{J,\vec \epsilon}f)(\xi)= \int_{\R^{Jn}}  \bigg[\prod_{j=1}^J \frac{\widehat V(k_j-k_{j-1})} {  (|\xi-k_j|^{2m}-|\xi|^{2m} -i\epsilon_j )} \bigg]\widehat f (\xi- k_J) dk_1 \cdots d k_J.
	$$
	We define the multiplier operator  $T_{k,\epsilon}^m$ by
	\be \label{Tmke}
	T_{k,\epsilon}^mf  =\mathcal F^{-1}\bigg( \frac{\widehat f(\xi)}{|\xi-k|^{2m}-|\xi|^{2m}-i\epsilon} \bigg). 
	\ee
	Let $  K_J(k_1,k_2,\dots, k_J)= \prod_{j=1}^J \widehat V(k_j-k_{j-1})$ and $f_{k_J}(x)=e^{ik_J\cdot x} f(x)$.  Then, we have 
	\begin{multline}\label{eq:ZJfin}
	Z_Jf(x) \\=\lim_{\epsilon_1\to 0^+} \cdots \lim_{\epsilon_J\to 0^+}  
	\int_{ \R^{n} } T_{k_1,\epsilon_1}^m\bigg\{  \int_{ \R^{n} } T_{k_2,\epsilon_2}^m\bigg\{ \cdots \int_{\R^n}  K_J(k_1,k_2,\dots, k_J) T_{k_J, \epsilon_J }^m f_{k_J} \, dk_J \bigg\}\cdots   \bigg\} dk_2 \bigg\} dk_1,
	\end{multline}
	Now, we need to study the operators $T^m_{k,\epsilon}$ in some detail.  We note the algebraic identity
	\begin{multline*}
	|\xi-k|^{2m}-|\xi|^{2m}-i\epsilon=(|\xi-k|^2-|\xi|^2)\big(\sum_{\ell=0}^{m-1}| \xi-k|^{2\ell} |\xi|^{2m-2-2\ell}\big)-i\epsilon\\
	= 2i \frac{|k|^{2m-1}}{p_\omega(\xi/|k|)}\big(-\frac{i|k|}2+i\omega\cdot\xi-\frac{\epsilon p_\omega(\xi/|k|) }{2  |k|^{2m-1} }\big),  
	\end{multline*}
	where
	\be \label{eqn:pomega def}
	\omega=\frac{k}{|k|}\in S^{n-1},\,\,\,\text{ and }\,\,  
	p_\omega(\xi)=\frac1{ \sum_{\ell=0}^{m-1} | \omega- \xi  |^{2\ell} | \xi |^{2m-2-2\ell}}.
	\ee
	We therefore have 
	$$
	T_{k,\epsilon}^mf  
	=\frac1{2i|k|^{2m-1}} \mathcal F^{-1}\bigg( \frac{p_\omega(\xi/|k|)\widehat f(\xi)}{ -\frac{i|k|}2+i\omega\cdot\xi-\frac{\epsilon p_\omega(\xi/|k|)}{2|k|^{2m-1} } } \bigg).
	$$
	Writing (note that $p_\omega(\xi)>0$)
	$$
	\frac{1}{ -\frac{i|k|}2+i\omega\cdot\xi-\frac{\epsilon p_\omega(\xi/|k|)}{2|k|^{2m-1} } }=-\int_0^\infty e^{-\frac{i|k|t}2+ it\omega\cdot\xi} e^{-\frac{\epsilon p_\omega(\xi/|k|)}{2|k|^{2m-1} }t} dt,
	$$	
	we obtain
	$$
	\mathcal F^{-1}\bigg( \frac{p_\omega(\xi/|k|) \widehat f(\xi)}{ -\frac{i|k|}2+i\omega\cdot\xi-\frac{\epsilon p_\omega(\xi/|k|)}{2|k|^{2m-1} }} \bigg)(x)=-\int_0^\infty e^{ -\frac{i|k|t}2 } h_{k,\frac{\epsilon t}{2|k|^{2m-1}}} * f (x+t\omega) dt,
	$$
	where $*$ denotes convolution and
	$$
	h_{k,\epsilon  } =\mF^{-1}\Big(p_\omega(\xi/|k|) e^{-  \epsilon   p_\omega(\xi/|k|)  }\Big).
	$$
	\begin{lemma}\label{lem:gk}
		
		We have the following bounds  (with $k=s\omega, s>0, \omega\in S^{n-1}$)
		$$
		\big\|  \sup_{\epsilon>0} h_{k,\epsilon } \big\|_{L^1}\les 1,
		$$
		$$
		\big\|\sup_{\epsilon>0} |\partial_s^j h_{s\omega,\frac{\epsilon }{ s^{2m-1}}}| \big\|_{L^1}\les  s^{-j},\,\,\, j= 1,2,\ldots 
		$$
		Furthermore, $h_{k,\epsilon}$ converges to $h_k:=h_{k,0}$ and $\partial_s^j h_{s\omega,\frac{\epsilon }{ s^{2m-1}}} $  converges to $\partial_s^j h_k  $ as $\epsilon\to 0$ a.e. and in $L^1$, and $h_k$ satisfies the same bounds above.    
	\end{lemma}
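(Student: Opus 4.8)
All the bounds below will be uniform in $\omega=k/|k|\in S^{n-1}$, so it suffices to fix $\omega$. The plan rests on three observations. First, $p_\omega=1/D_\omega$ where $D_\omega(\eta)=\sum_{\ell=0}^{m-1}|\omega-\eta|^{2\ell}|\eta|^{2m-2-2\ell}$ is a polynomial in $\eta$ of degree exactly $2m-2$ (top-order part $m|\eta|^{2m-2}$) with coefficients depending continuously on $\omega$, and since the $\ell=0,m-1$ terms give $D_\omega(\eta)\ge|\eta|^{2m-2}+|\omega-\eta|^{2m-2}\ge 2^{2-2m}$, we have $D_\omega\sim\la\eta\ra^{2m-2}$; the quotient rule then yields $|\D_\eta^\alpha p_\omega(\eta)|\les_\alpha\la\eta\ra^{-(2m-2)-|\alpha|}$, i.e.\ $p_\omega$ lies uniformly in the symbol class $S^{-(2m-2)}$ (I write $S^{-\rho}$ for functions obeying $|\D^\alpha a(\eta)|\les\la\eta\ra^{-\rho-|\alpha|}$). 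Second, I will use the elementary fact that a bounded family $\{a_\iota\}\subset S^{-\rho}$ with $\rho>0$ has $\sup_\iota\|\mF^{-1}a_\iota\|_{L^1}\les1$: split $a_\iota=\sum_{j\ge0}a_\iota\beta_j$ into Littlewood--Paley pieces ($|\eta|\sim2^j$ for $j\ge1$, $|\eta|\les1$ for $j=0$) and use $|y^\alpha\mF^{-1}(a_\iota\beta_j)(y)|\les\|\D^\alpha(a_\iota\beta_j)\|_{L^1}\les 2^{j(n-\rho-|\alpha|)}$ to get $\|\mF^{-1}(a_\iota\beta_j)\|_{L^1}\les2^{-j\rho}$, which is summable since $\rho=2m-2\ge2$ (recall $m>1$). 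Third, the substitution $\xi=|k|\eta$ gives the scaling $h_{k,\epsilon}(x)=s^n\widetilde H_\epsilon(sx)$ and $h_{s\omega,\epsilon/s^{2m-1}}(x)=s^n\widetilde H_{\epsilon/s^{2m-1}}(sx)$ with $s=|k|$ and $\widetilde H_\mu:=\mF^{-1}(p_\omega e^{-\mu p_\omega})$, which makes every $L^1(dx)$-norm in the statement scale-invariant.

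For the first bound I note that $g_\mu:=p_\omega e^{-\mu p_\omega}$ remains a bounded family in $S^{-(2m-2)}$ for $\mu\in[0,\infty)$: each $\eta$-derivative produces factors $\D^\beta p_\omega$ and a polynomial $P(\mu p_\omega)$, and $|P(t)e^{-t}|\les1$ on $t\ge0$ absorbs the latter; hence $\sup_\mu\|\widetilde H_\mu\|_{L^1}\les1$. To reach the supremum over $\epsilon$ I will write $\sup_{\mu>0}|\widetilde H_\mu(y)|\le|\widetilde H_0(y)|+\int_0^\infty|\D_\mu\widetilde H_\mu(y)|\,d\mu$ and exploit the gain in the $\mu$-derivative: $\D_\mu g_\mu=-p_\omega^2e^{-\mu p_\omega}$ is a bounded family in $S^{-2(2m-2)}$ with $|\D_\eta^\alpha(\D_\mu g_\mu)(\eta)|\les\la\eta\ra^{-2(2m-2)-|\alpha|}e^{-c_\alpha\mu\la\eta\ra^{-(2m-2)}}$ (absorbing the polynomial in $\mu p_\omega$ into a fraction of the exponential and using $p_\omega\gtrsim\la\eta\ra^{-(2m-2)}$). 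Rerunning the dyadic bound with this decay gives $\|\mF^{-1}(\D_\mu g_\mu\,\beta_j)\|_{L^1}\les2^{-2j(2m-2)}e^{-c\mu2^{-j(2m-2)}}$, and integrating in $\mu$ (a cost of $2^{j(2m-2)}$) leaves $\sum_{j\ge0}2^{-j(2m-2)}\les1$; hence $\int_0^\infty\|\D_\mu\widetilde H_\mu\|_{L^1}\,d\mu\les1$ and $\|\sup_{\epsilon>0}|h_{k,\epsilon}|\|_{L^1}\les1$.

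For the derivative bounds, differentiating $h_{s\omega,\epsilon/s^{2m-1}}(x)=s^n\widetilde H_{\epsilon s^{-(2m-1)}}(sx)$ in $s$ and changing variables gives, by induction,
\be\label{eq:djh}
\D_s^j h_{s\omega,\epsilon/s^{2m-1}}(x)=s^{\,n-j}\,\mF^{-1}\!\big(a^{(j)}_{\mu}\big)(sx),\qquad\mu=\epsilon s^{-(2m-1)},
\ee
where $a^{(j)}_\mu$ is a finite linear combination (constant coefficients) of functions obtained from $g_\mu$ by applying at most $j$ of the operators $\eta\cdot\nabla_\eta$ and $\mu\D_\mu$: each $\D_s$ either lowers the power of $s$, or hits the dilation $sx$ (producing $s^{-1}$ times $\eta\cdot\nabla_\eta$ on the symbol side, up to an order-preserving constant), or hits $\mu=\epsilon s^{-(2m-1)}$ (producing $-(2m-1)s^{-1}$ times $\mu\D_\mu$). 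The bookkeeping point is that the class of functions $Q(\eta)P(\mu p_\omega)e^{-\mu p_\omega}$, with $Q\in S^{-(2m-2)}$ uniformly in $\omega$ and $P$ a polynomial, is stable under both operators: $\mu\D_\mu$ only changes $P$, while $\eta\cdot\nabla_\eta$ either replaces $Q$ by $\eta\cdot\nabla Q\in S^{-(2m-2)}$ or produces $Q\cdot\frac{\eta\cdot\nabla p_\omega}{p_\omega}\in S^{-(2m-2)}$ (note $\frac{\eta\cdot\nabla p_\omega}{p_\omega}=-\frac{\eta\cdot\nabla D_\omega}{D_\omega}\in S^0$) against a new polynomial in $\mu p_\omega$. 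Thus each $a^{(j)}_\mu$ is a bounded family in $S^{-(2m-2)}$, so $\sup_\mu\|\mF^{-1}a^{(j)}_\mu\|_{L^1}\les1$; and $\D_\mu a^{(j)}_\mu$ gains a factor $p_\omega$ and the exponential-in-$\mu$ decay exactly as before, so $\int_0^\infty\|\mF^{-1}(\D_\mu a^{(j)}_\mu)\|_{L^1}\,d\mu\les1$ and $\|\sup_{\mu>0}|\mF^{-1}a^{(j)}_\mu|\|_{L^1}\les1$. Plugging into \eqref{eq:djh} and using $\|s^{n-j}\phi(s\cdot)\|_{L^1}=s^{-j}\|\phi\|_{L^1}$ gives $\|\sup_{\epsilon>0}|\D_s^j h_{s\omega,\epsilon/s^{2m-1}}|\|_{L^1}\les s^{-j}$.

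The convergence statements follow from the same estimates: $\|\widetilde H_\epsilon-\widetilde H_0\|_{L^1}\le\int_0^\epsilon\|\D_\nu\widetilde H_\nu\|_{L^1}\,d\nu\to0$ as $\epsilon\to0$ (the full integral being finite), so $h_{k,\epsilon}\to h_{k,0}$ in $L^1$; decomposing dyadically and applying dominated convergence on each compactly supported piece with a tail made uniformly small by the bounds above yields the a.e.\ convergence, and the same works for $\D_s^j$ through $\|\mF^{-1}(a^{(j)}_\mu-a^{(j)}_0)\|_{L^1}\le\int_0^\mu\|\mF^{-1}(\D_\nu a^{(j)}_\nu)\|_{L^1}\,d\nu$. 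Finally $h_k=h_{k,0}$ is the case $\mu=0$ ($a^{(j)}_0\in S^{-(2m-2)}$), so it inherits the stated bounds. The step I expect to be the main obstacle is proving the symbol estimates for $g_\mu$, and for all of its $\eta\cdot\nabla_\eta$/$\mu\D_\mu$ descendants, \emph{uniformly in $\mu$} and \emph{with the exponential-in-$\mu$ gain on each $\mu$-derivative} --- that is, the ``polynomial absorbed by a fraction of the exponential'' mechanism together with the stability of the relevant symbol class --- since this is precisely what forces the $\mu$-integrals in the maximal bounds to converge.
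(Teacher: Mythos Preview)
Your argument is correct, but it takes a noticeably longer route than the paper's. Both proofs begin with the same symbol estimate $|\partial_\eta^\alpha p_\omega(\eta)|\les\la\eta\ra^{-(2m-2)-|\alpha|}$ (and its stability under the exponential factor and under the operators $\eta\cdot\nabla_\eta$, $\mu\partial_\mu$). The divergence is in how one passes from these symbol bounds to the $L^1$ control of $\sup_\epsilon|h_{k,\epsilon}|$. The paper observes that for $N\in\{n-1,n+1\}$ (both $\ge n-2m+3$ since $m\ge2$) the function $|x|^N\mF^{-1}(p_\omega e^{-\epsilon p_\omega})(x)$ is bounded \emph{uniformly in $\epsilon$}, and since the symbol vanishes at infinity there is no point mass at the origin; this yields a single pointwise dominating function $\min(|x|^{-n+1},|x|^{-n-1})$, independent of $\epsilon$, from which both the supremum bound and the convergence statements follow immediately by elementary domination. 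Your Littlewood--Paley argument instead produces only $\sup_\mu\|\widetilde H_\mu\|_{L^1}\les1$, which is weaker than $\|\sup_\mu|\widetilde H_\mu|\|_{L^1}\les1$, so you are forced into the additional step $\sup_\mu|\widetilde H_\mu|\le|\widetilde H_0|+\int_0^\infty|\partial_\mu\widetilde H_\mu|\,d\mu$ together with the extra mechanism of tracking exponential-in-$\mu$ decay on each dyadic piece to make the $\mu$-integral converge. This works, but it is machinery the paper avoids entirely. (Incidentally, your own dyadic bounds already imply the pointwise majorant $|\widetilde H_\mu(y)|\les\min(|y|^{-(n-2m+2)},|y|^{-n-1})$ uniformly in $\mu$, which would have let you skip the $\int\partial_\mu$ device.) The payoff of your approach is that it is packaged in standard symbol-class language and would transplant directly to other $S^{-\rho}$ symbols; the paper's approach is shorter and gives the slightly sharper pointwise envelope, which is what actually gets used later (e.g.\ the bound $|h_{s\omega}(y)|\les s^n\min((s|y|)^{-n-\delta},(s|y|)^{-n+\delta})$ in the proof of the subsequent $L^1$ estimate on $F$).
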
 
	\begin{proof} We first prove the claims for $h_k$.
		Note that 
		$$
		\| h_{s\omega}\|_{L^1} = \Big\| \mF^{-1} ( p_{\omega} )\Big\|_{L^1}.
		$$
		A simple calculation shows that
		$$
		\big|\nabla_\xi^N  p_\omega (\xi) \big| \les \frac{1}{\la \xi \ra^{2m-2+N}}, \qquad N=0,1,2,\dots
		$$
		This is seen by considering cases based on the size of $|\xi|$ and $|\omega|=1$ in \eqref{eqn:pomega def}.  Therefore, for $N\geq n-2m+3$, $|x|^N  \mF^{-1} ( p_{\omega} )(x)$ is a bounded continuous function, and hence  
		$$
		\mF^{-1} ( p_{\omega} )(x)=u+O\big(\min(|x|^{-n-1},|x|^{-n+1})\big),
		$$
		where $u$ is a distribution supported at $0$. Since $ p_{\omega}( \xi )  \to 0$ as $|\xi|\to \infty$, we conclude that $u=0$, which yields the claim for $j=0$.  For $j>0$, note that 
		$$
		\partial_s\mF^{-1}p_\omega(sx)=x\cdot [\nabla\mF^{-1} p_{\omega}](xs)=\frac1s \mF^{-1}(\nabla\cdot\xi\, p_{\omega}(\xi)) (xs).
		$$
		Similarly, $\partial_s^\ell \mF^{-1}p_\omega(sx)= s^{-\ell} \mF^{-1}((\nabla\cdot\xi)^\ell p_{\omega}(\xi)) (xs)$. Therefore,  
		$$
		|\partial_s^jh_{s\omega}(x)|\les \sum_{\ell=0}^j s^{n+\ell-j} s^{-\ell}  | \mF^{-1}((\nabla\cdot\xi)^\ell p_{\omega}(\xi)) (xs)|.
		$$
		The claim follows from this as above since 	$(\nabla\cdot\xi)^\ell p_{\omega}(\xi)$ satisfies the same bounds as $p_{\omega}(\xi)$.

		Now, we consider $h_{k,\epsilon}$.  Let $H_\omega(\epsilon,x)=\mF^{-1}\Big(p_\omega e^{-  \epsilon   p_\omega  }\Big)(x)$. 	Using the bounds on the derivatives of $p_\omega$, and noting that  
		$ p_\omega (\xi)\approx \la \xi\ra^{2-2m}$ and that  $\sup_{\alpha>0} \alpha e^{-\alpha}\les 1$, we conclude that
		$$
		\big|\nabla_\xi^N  [p_\omega (\xi) e^{-  \epsilon   p_\omega(\xi ) }]\big| \les \frac{1}{\la \xi \ra^{2m-2+N}}, \qquad N=0,1,2,\dots
		$$
		Therefore we have  
		\be\label{eqn:homega bound}
		|\mF^{-1} ( p_\omega e^{-  \epsilon   p_\omega  } )(x)|\les    \min(|x|^{-n-1},|x|^{-n+1}) ,
		\ee
		uniformly in  $\epsilon>0$.
		This yields the claim for $j=0$ since $h_{k,\epsilon}=s^nH_\omega(\epsilon,sx)$.   
		
		Similarly, note that 
		$$\big|\nabla_\xi^N  [p_\omega (\xi) (e^{-  \epsilon   p_\omega(\xi ) }-1) ]\big|\les \frac{\epsilon }{\la \xi \ra^{4m-4+N}}, \qquad N=0,1,2,\dots.
		$$
		This implies the a.e. and $L^1$ convergence of $h_{k,\epsilon}$  to $h_k $.
		
		For the $j$th derivative of $h_{k,\epsilon}$, by chain rule and scaling as above, it suffices to prove that the $L^1$ norms of $\sup_{\epsilon} \epsilon^{j_1}\partial_\epsilon^{j_1}  (x\cdot\nabla_x)^{j_2} \mF^{-1}[p_{\omega}e^{-\epsilon p_\omega}](x)$ are $\les 1 $ for $j_1,j_2\geq 0$. Note that $$\nabla_\xi^{N}\epsilon^{j_1}\partial_\epsilon^{j_1}  (\nabla_\xi\cdot\xi)^{j_2} p_{\omega}(\xi)e^{-\epsilon p_\omega(\xi)} \in L^1$$
		for $N  \geq n-2m+3$. The claim follows  as above.  Convergence of the $s$ derivatives of  $h_{k,\epsilon}$ follow similarly. 
	\end{proof}
	We conclude that for $f\in\mathcal S$
	$$
	T_{k,\epsilon}^m f(x) =\frac{i}{2|k|^{2m-1}}\int_0^{\infty} \int_{ \R^{ n} } e^{-i|k|t/2} h_{k,\frac{\epsilon t}{2|k|^{2m-1}}}(y ) f(x-y +t\omega)  \, dy  \, dt,
	$$
	and for all $x\in \R^n$
	$$	\lim_{\epsilon\to 0^+} T_{k,\epsilon}^mf(x)= \frac{i}{2|k|^{2m-1}} \int_0^\infty e^{-it|k|/2} \int_{\R^n}h_k(y )f(x-y +t\omega) \, dy  \, dt:= T_k^mf(x). $$
	Following the notation of \cite{YajWkp1}, for $\epsilon>0$, let 
	$$
	G_\epsilon f=\int_{\R^n} T_{k,\epsilon}^m f(k,\cdot) dk,\qquad   G_0 f  =\int_{\R^n} T_{k}^m f (k,\cdot) dk,
	$$
	Note that
	\be\label{eq:geps}
	G_\epsilon f(x)=\int_{\R^n}\frac{i}{2|k|^{2m-1}}\int_0^{\infty} \int_{ \R^{ n} } e^{-i|k|t/2}  h_{k,\frac{\epsilon t}{2|k|^{2m-1}}}(y ) f(k,x-y +t\omega)  \, dy  \, dt\,dk.
	\ee
	Passing to polar coordinates, $k=s\omega$, and changing the order of integration, we have 
	$$
	G_\epsilon f(x)=\frac{i}2\int_{S^{n-1}} \int_0^{\infty} F_\epsilon( t,\omega,x)  \, dt\,d\omega, 
	$$
	where
	$$
	F_\epsilon( t,\omega,x)= \int_0^\infty  e^{-is t/2} s^{n-2m}  h_{s\omega,\frac{\epsilon t}{2s^{2m-1}}}* f(s\omega, \cdot)(x +t\omega ) \, ds.
	$$
	Also note that $G_0f$   satisfies the same formula  with  $F_0 $  replacing  $F_\epsilon $.

	\begin{lemma}\label{lem:Geps} 
		Let  $\epsilon>0$ and  $f(k,x)\in \mathcal S(\R^n_k,\mathcal S(\R^n_x))$. For all  $n>2m+1$ and $1\leq p \leq \infty$, we have
		$$ \| G_\epsilon f\|_{L^p}  \leq C_{n,m} \int_{\R^{n}}  
		\la k\ra^{n-2m}  
		\sum_{j =0}^2   \|D_k^{j }f(k,\cdot)\|_{L^p}  \frac{dk}{|k|^{n-1}}.
		$$
		For $n=2m+1$, we have 
		$$ \| G_\epsilon f\|_{L^p} 
		\leq C_{n,m} \int_{\R^{n}}  
		\la k\ra \min(1,|k|)^{-\frac12}  
		\sum_{j =0}^3  \|D_k^{j }f(k,\cdot)\|_{L^p}   \frac{dk}{|k|^{n-1}}.
		$$
		Moreover, $ G_\epsilon f\to G_0 f$  in $L^p$ as $\epsilon \to 0^+$.
	\end{lemma}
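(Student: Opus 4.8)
The plan is to follow Yajima's treatment of the case $m=1$ in \cite{YajWkp1}. Recall that
\[
G_\eps f(x)=\tfrac{i}{2}\int_{S^{n-1}}\int_0^\infty F_\eps(t,\omega,x)\,dt\,d\omega,\qquad
F_\eps(t,\omega,x)=\int_0^\infty e^{-ist/2}\,\Phi(s,t,\omega,x)\,ds,
\]
where $\Phi(s,t,\omega,x)=s^{n-2m}\big(h_{s\omega,\frac{\eps t}{2s^{2m-1}}}*f(s\omega,\cdot)\big)(x+t\omega)$. By Minkowski's integral inequality it suffices to bound $\big\|\int_0^\infty F_\eps(t,\omega,\cdot)\,dt\big\|_{L^p}$ for each fixed $\omega\in S^{n-1}$ and then integrate in $\omega$; after passing to polar coordinates $k=s\omega$ the integral $\int_{S^{n-1}}\int_0^\infty(\,\cdot\,)\,ds\,d\omega$ becomes $\int_{\R^n}(\,\cdot\,)\,|k|^{1-n}\,dk$, which is the form of the claimed bounds. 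So the goal, uniform in $\eps>0$, is
\[
\Big\|\int_0^\infty F_\eps(t,\omega,\cdot)\,dt\Big\|_{L^p}\les \int_0^\infty \la s\ra^{n-2m}\,\mu(s)\sum_{j\le J}\|D_k^j f(s\omega,\cdot)\|_{L^p}\,ds,
\]
with $J=2$, $\mu\equiv1$ if $n>2m+1$, and $J=3$, $\mu(s)=\min(1,s)^{-1/2}$ if $n=2m+1$.

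The two inputs are Lemma~\ref{lem:gk} and the Schwartz decay of $f$ in $k$. Young's convolution inequality with Lemma~\ref{lem:gk} gives, uniformly in $t$ and $\eps$,
\[
\big\|(\partial_s^b h_{s\omega,\frac{\eps t}{2s^{2m-1}}})*g\big\|_{L^p}\les s^{-b}\|g\|_{L^p},\qquad b=0,1,2,\dots
\]
($\partial_s$ differentiating both slots of $h$), while $\|D_k^c f(s\omega,\cdot)\|_{L^p}$ decays faster than any power of $s$. Leibniz's rule then yields, uniformly in $t,\eps$,
\[
\|\partial_s^j\Phi(s,t,\omega,\cdot)\|_{L^p}\les \sum_{a+b+c=j} s^{\,n-2m-a-b}\,\|D_k^c f(s\omega,\cdot)\|_{L^p}
\]
(terms with $a>n-2m$ absent). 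For the part of the $t$-integral over $0<t\le1$ I would simply estimate $\|F_\eps(t,\omega,\cdot)\|_{L^p}\les\int_0^\infty s^{n-2m}\|f(s\omega,\cdot)\|_{L^p}\,ds$, an absolutely convergent integral, and integrate over $t\in(0,1]$; since $n>2m$, $s^{n-2m}\le\la s\ra^{n-2m}$, so this is dominated by the $j=0$ term.

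For $t\ge1$ one exploits the oscillation $e^{-ist/2}$. When $n\ge2m+2$ one integrates by parts twice in $s$: the boundary terms at $s=\infty$ vanish since $f$ is Schwartz, and those at $s=0$ vanish because $\Phi=O(s^{n-2m})$ and $\partial_s\Phi=O(s^{n-2m-1})$ near $s=0$ with $n-2m-1\ge1$. This rewrites $F_\eps(t,\omega,x)$, up to a constant, as $t^{-2}\int_0^\infty e^{-ist/2}\partial_s^2\Phi(s,t,\omega,x)\,ds$. Taking $L^p$ norms, using the Leibniz bound, and integrating $\int_1^\infty t^{-2}\,dt<\infty$ gives a bound by $\int_0^\infty\sum_{a+b+c=2}s^{\,n-2m-a-b}\|D_k^c f(s\omega,\cdot)\|_{L^p}\,ds$; since $a+b\le2$ and $n-2m-2\ge0$ one has $s^{\,n-2m-a-b}\le\la s\ra^{n-2m}$ on $(0,\infty)$, so this is $\les$ the right-hand side with $J=2$, $\mu\equiv1$. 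With the previous paragraph this settles $n>2m+1$.

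The case $n=2m+1$ will be the technical core. Now $n-2m=1$, so $\partial_s\Phi(s,t,\omega,\cdot)$ does not vanish as $s\to0$ and $\partial_s^2\Phi$ is of size $s^{-1}$ there, so integrating by parts twice from $s=0$ is no longer legitimate. Instead I would split the $s$-integral at the scale $s\sim t^{-1}$ on which $e^{-ist/2}$ starts to oscillate: on $0<s\les t^{-1}$ use the trivial bound $\big|\int e^{-ist/2}\Phi\,ds\big|\le\int|\Phi|\,ds$, and on $s\gtrsim t^{-1}$ integrate by parts (up to three times), the boundary terms now being evaluated at the finite point $s\sim t^{-1}$ rather than at $0$. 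Summing the resulting contributions over $t\ge1$ by Fubini and balancing the two regimes costs one extra $k$-derivative of $f$ (hence $J=3$) and produces the weaker weight $\mu(s)=\min(1,s)^{-1/2}$ near the origin; the power $\tfrac12$ is forced by a Cauchy--Schwarz step needed to close the borderline integral, and carrying out this balancing with the sharp weight is the main obstacle I anticipate. Finally, the convergence $G_\eps f\to G_0 f$ in $L^p$ follows by rerunning all of the above for the difference $F_\eps-F_0$: by Lemma~\ref{lem:gk}, $h_{s\omega,\frac{\eps}{s^{2m-1}}}$ and its $s$-derivatives converge in $L^1$ to those of $h_{s\omega}$ as $\eps\to0^+$, so the estimates applied to that difference tend to $0$ by dominated convergence.
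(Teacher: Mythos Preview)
Your argument for $n>2m+1$ and for the $L^p$ convergence $G_\eps\to G_0$ matches the paper. For $n=2m+1$ your route is genuinely different, and in fact simpler than you anticipate. The paper does \emph{not} split at $s\sim t^{-1}$; instead it integrates by parts once from $s=0$ (valid since $\Phi(0)=0$) to get $-\tfrac{2i}{t}\int_0^\infty e^{-ist/2}K(s)\,ds$ with $K=\partial_s\Phi$, then applies the half-period-shift identity
\[
\int_0^\infty e^{-ist/2}K\,ds=\tfrac12\int_0^{2\pi/t}e^{-ist/2}K\,ds+\tfrac12\int_0^\infty e^{-i(s+2\pi/t)t/2}\big[K(s+\tfrac{2\pi}{t})-K(s)\big]\,ds
\]
together with the interpolation $\|K(s+h)-K(s)\|\le(\|K(s+h)\|+\|K(s)\|)^{1/2}\big(\int_s^{s+h}\|\partial_\rho K\|\,d\rho\big)^{1/2}$. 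It is this square-root step that produces the extra $t^{-1/2}$, the weight $\min(1,s)^{-1/2}$, and the third derivative appearing in the lemma's stated bound. By contrast, your split-and-IBP scheme needs none of that: two integrations by parts on $\{s>1/t\}$ already suffice. The boundary terms at $s=1/t$ have $L^p$ norm $\les t^{-2}\|f(\omega/t,\cdot)\|+t^{-3}\|\partial_s f(\omega/t,\cdot)\|$, whose integral over $t\ge1$ becomes $\int_0^1(\|f(s\omega,\cdot)\|+s\|\partial_s f(s\omega,\cdot)\|)\,ds$ after the substitution $s=1/t$; the bulk term $t^{-2}\int_{1/t}^\infty\|\partial_s^2\Phi\|\,ds$, combined with $\|\partial_s^2\Phi\|\les s^{-1}\|f\|+\|\partial_s f\|+s\|\partial_s^2 f\|$ and Fubini in $(s,t)$, gives $\int_0^\infty\min(1,s)\|\partial_s^2\Phi\|\,ds\les\int_0^\infty\la s\ra\sum_{j\le2}\|\partial_s^j f(s\omega,\cdot)\|\,ds$. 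So your method actually yields the \emph{stronger} bound with $J=2$ and weight $\la s\ra$; the Cauchy--Schwarz step, third derivative, and $\min(1,s)^{-1/2}$ weight you anticipate are features of the paper's argument, not of yours.
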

	\begin{proof}
		Note that
		$$\big\|  F_\epsilon( t,\omega,x) \big\|_{L^p_x} \les    \int_0^\infty s^{n-2m}  \| \sup_\epsilon h_{s\omega,\epsilon}\|_{L^1}
		\|f(s\omega, \cdot)\|_{L^p} \, ds  \\ \les     \int_0^\infty s^{n-2m}  
		\|f(s\omega, \cdot)\|_{L^p} \, ds .
		$$
		For $t>1$, and $n>2m+1$, we integrate by parts twice in the $s$ integral to obtain 
		$$  |F_\epsilon( t,\omega,x) | \les  \frac{1}{t^2}\int_{\R^ n}\int_0^\infty \big|   \partial_s^2\big( s^{n-2m} h_{s\omega,\frac{\epsilon t }{2 s^{2m-1}}}(y ) f(s\omega,x-y +t\omega)\big)\big|  \, ds\, dy. 
		$$ 
		Let $H_{s\omega }(y)=|\sup_{\epsilon>0, j=0,1,2 } s^j \partial_s^j  h_{s\omega,\frac{\epsilon t }{2 s^{2m-1}}}(y )|$.  Using this we obtain the bound 
		\begin{multline*}  
		|F_\epsilon( t,\omega,x) | \les  
		\frac{1}{t^2}\int_{\R^n}\int_0^\infty \la s\ra^2s^{n-2m-2} H_{s\omega}(y) \sum_{j=0}^2 \big|   \partial_s^j f(s\omega,x-y +t\omega) \big|  \, ds\, dy \\
		\les  \frac{1}{t^2} \int_{ \R^n } \int_0^\infty  H_{s\omega}(y) \la s\ra^{n-2m}    \sum_{j=0}^2 \big|   \partial_s^j f(s\omega,x-y +t\omega) \big|  \, ds\, dy. 
		\end{multline*}
		
		By Lemma~\ref{lem:gk}, $\|H_{s\omega}\|_{L^1}\les 1$, therefore uniformly in $t$ and $\omega$, we have  
		$$\big\|  F_\epsilon( t,\omega,x) \big\|_{L^p_x} \les \frac1{\la t\ra^2}
		\int_0^\infty \la s\ra^{n-2m}   \sum_{j=0}^2 \big\|   \partial_s^j f(s\omega,\cdot)\big\|_{L^p}  \, ds,
		$$
		which implies the claim for $G_\epsilon f$ when $n>2m+1$. The convergence of $G_\epsilon f$ to $G_0f$ in $L^p$ also follows by applying the same argument with $ h_{s\omega,\frac{\epsilon t}{2s^{2m-1}}}-h_{s\omega}$ replacing $ h_{s\omega,\frac{\epsilon t}{2s^{2m-1}}} $ and using dominated convergence theorem. 
		
		We now consider the case $n=2m+1$. For $t\gg 1$, after an integration by parts, we have
		$$ F_\epsilon( t,\omega,x)   =-\frac{2i}t \int_0^\infty  e^{-is t/2} \partial_s[s  h_{s\omega,\frac{\epsilon t}{2s^{2m-1}}}* f(s\omega, \cdot)(x +t\omega ) ] \, ds. 
		$$
		We cannot integrate by parts again to gain another power of $t$ in this case. Therefore we utilize the identity (with $ K(s) =\partial_s[s  h_{s\omega,\frac{\epsilon t}{2s^{2m-1}}}* f(s\omega, \cdot)(x +t\omega ) ]$)
		$$
		\int_0^\infty  e^{-is t/2} K(s) ds= \frac12 \int_0^{2\pi/t}  e^{-is t/2} K(s) ds + \frac12  \int_{0}^\infty  e^{-i (s+2\pi/t) t  /2  } [K(s+2\pi/t) -K(s )] ds.
		$$ 
		This implies that 
		\begin{multline*}
		\Big\|\int_0^\infty  e^{-is t/2} K(s) ds\Big\|_{L^p_x}
		\les \\ \int_0^{2\pi/t}  \|K(s)\|_{L^p_x} ds+
		\int_{0}^\infty  (\|K(s+2\pi/t)\|_{L^p_x}+\|K(s)\|_{L^p_x})^\frac12 \Big(\int_{s }^{s+2\pi/t} \big\|\partial_\rho K(\rho) \big\|_{L^p_x} d\rho\Big)^\frac12 ds\\
		\les t^{-\frac12} \sup_{0<s<1} \|K(s)\|_{L^p_x}+t^{-\frac12}\int_0^\infty \big[\sup_{s<\rho<s+1} \|K(\rho)\|_{L^p}\big]^{1/2} \big[\sup_{s<\rho<s+1} \|\partial_\rho K(\rho)\|_{L^p}\big]^{1/2} ds. 
		\end{multline*}
		Note that
		$$
		\|K(\rho)\|_{L^p_x} \les \la \rho\ra   \big(\|f(\rho\omega,\cdot)\|_{L^p} +\|\partial_\rho f(\rho\omega,\cdot)\|_{L^p}\big)
		$$
		$$
		\big\|\partial_\rho K(\rho) \big\|_{L^p_x} \les \la \rho\ra  \min(1,\rho)^{-1 }    \big(\|f(\rho\omega,\cdot)\|_{L^p} +\|\partial_\rho f(\rho\omega,\cdot)\|_{L^p}+\|\partial^2_\rho f(\rho\omega,\cdot)\|_{L^p}\big).
		$$
		Therefore,
		$$
		\Big\|\int_0^\infty  e^{-is t/2} K(s) ds\Big\|_{L^p_x}\les t^{-\frac12}
		\int_0^\infty \la s\ra \min(1,s)^{-\frac12} \sup_{s<\rho<s+1} \sum_{j=0}^2 \|\partial_\rho^j f(\rho \omega,\cdot )\|_{L^p} ds.
		$$
		Noting that, for $s<\rho<s+1$ 
		$$
		\sum_{j=0}^2 \|\partial_\rho^j f(\rho \omega,\cdot )\|_{L^p} \leq \sum_{j=0}^2 \|\partial_s^j f(s \omega,\cdot )\|_{L^p} +\int_s^{s+1} \sum_{j=0}^3 \|\partial_\rho^j f(\rho \omega,\cdot )\|_{L^p}\, d\rho,
		$$
		and applying Fubini's theorem yield the claim bounding $G_\epsilon$ in $L^p$. Convergence in $L^p$ follows similarly. 
	\end{proof}

	We now return to the operator $Z_J$ defined in \eqref{eq:ZJfin}. For fixed $k_1,\ldots k_{J-1}$, the inner most integral is  $G_{\epsilon_J} \widetilde f_{k_J} $  where 
	$\widetilde f_{k_J}(k_J,x)= e^{ik_J\cdot x}  K_J(k_1, k_2,\dots, k_J) f(x)$. By Lemma~\ref{lem:Geps}, it  converges to $G_0 \widetilde f_{k_J} $ in $L^p$ for $f\in \mathcal S$.  Using Lemma~\ref{lem:Geps}, we also  take $\epsilon_{J-1},\ldots, \epsilon_1\to 0^+$ to obtain
	\be\label{ZJlimit}
	Z_Jf(x) =  \int_{ \R^{n} } T_{k_1 }^m\bigg\{  \int_{ \R^{n} } T_{k_2 }^m\bigg\{ \cdots \int_{\R^n}  
	T_{k_J}^m \widetilde f_{k_J} \, dk_J \bigg\}\cdots   \bigg\} dk_2 \bigg\} dk_1.
	\ee
	We rewrite the inner most integral using \eqref{eq:geps} (with $\epsilon=0$) as  
	\begin{multline}\label{eqn:G0 fkx}
	G_0 \widetilde f_{k_J} (x)\\
	=\int_{\R^n}\frac{i}{2|k_J|^{2m-1}}\int_0^{\infty} \int_{ \R^{ n} } e^{-i\frac{|k_J|t_J}2}  h_{k_J}(y_J ) e^{ik_J\cdot(x-y_J+t_J\omega_J)}  K_J(k_1,  \dots,  k_J) f(x-y_J +t_J\omega_J)  \, dy_J  \, dt_J\,dk_J\\
	=\int_0^{\infty} \int_{S^{n-1}}\int_{\R^n}  \int_0^\infty   \frac{is_J^{n-2m}}{2}  e^{ i\frac{s_Jt_J}2+is_J\omega_J\cdot(x-y_J)}  h_{s_J\omega_J  }(y_J )  K_J(k_1,  \dots, s_J\omega_J)  f( x-y_J +t_J\omega_J)  \, ds_J\, dt_J\, dy_J\,  d\omega_J.
	\end{multline}
	Letting $t_J+2\omega_J\cdot(x-y_J) \to -t_J$, we have
	$$ 
	\eqref{eqn:G0 fkx}=\frac{i}2 \int_{S^{n-1}}  \int_{\R^n} \int_{-\infty}^{-2\omega_J\cdot(x -y_J)}  F_J(k_1,\ldots,k_{J-1},t_J,y_J,\omega_J) f( \overline{x}-\overline{y_J} -t_J\omega_J)   \, dt_J  \, dy_J \,d\omega_J,
	$$
	where $\overline x=x-2\omega_J(x\cdot \omega_J)$ and 
	$$
	F_J(k_1,\ldots,k_{J-1},t_J,y_J,\omega_J)= \int_0^\infty  s_J^{n-2m}  e^{ i\frac{s_Jt_J}2 }  h_{s_J\omega_J  }(y_J )   K_J(k_1,  \dots,k_{J-1}, s_J\omega_J)    \, ds_J.
	$$
	Now, using \eqref{eq:geps} (with $\epsilon=0$) we rewrite the integral in $k_{J-1}$ in \eqref{ZJlimit} to obtain
	$$ 
	\eqref{eqn:G0 fkx}=\bigg(\frac{i}{2}\bigg)^2 \int_{S^{n-1}\times\R^n\times(0,\infty)}\int_{S^{n-1}\times\R^{ n}\times (-\infty,\sigma_{J-1})} F_{J-1}  f( \overline  x -\gamma_{J-1})   \, dt_J dy_J   d\omega_J   dt_{J-1}  dy_{J-1}  d\omega_{J-1},
	$$
	where for  $j=1,\dots, J-1$, 
	$$\gamma_j:=\overline{y_J}+t_J\omega_J +\sum_{\ell=j}^{J-1}\overline{y_{\ell}-t_{\ell}\omega_{\ell}},   \qquad \sigma_j=-2\omega_J \cdot(x-y_J-\sum_{\ell=j}^{J-1}(y_{\ell}-t_\ell\omega_\ell)),$$ and 
	\begin{multline*}
	F_{J-1} = F_{J-1}(k_1, \dots, k_{J-2},t_{J-1},\omega_{J-1},y_{J-1},t_J,\omega_J,y_J)\\
	:=
	\int_{(0,\infty)^2} \prod_{j=J-1}^J \big[s_j^{n-2m} e^{- i\frac{ s_jt_j}2  }  h_{s_j\omega_j} (y_j )\big]   K_J(k_1, \dots, k_{J-2},s_{J-1}\omega_{J-1},s_J\omega_J)   \,ds_J\,ds_{J-1}.
	\end{multline*}
	Continuing in this manner we have
	$$
	Z_Jf(x)=   \big(\frac{i}{2}\big)^J \int_{(S^{n-1}\times\R^n\times(0,\infty))^{J-1}}\int_{S^{n-1}\times\R^{ n}\times (-\infty,\sigma_{1})} F \,  f( \overline{x} -\gamma_{1})  dt_{J }  dy_{J }  d\omega_{J} \cdots  dt_1 dy_1 d\omega_1,  
	$$
	where 
	\begin{multline*}
	F =F (t_{ 1},\omega_{ 1},y_{ 1}, \dots,  t_J,\omega_J,y_J)\\
	:=
	\int_{(0,\infty)^J} \prod_{j= 1}^J \big[s_j^{n-2m} e^{- i\frac{ s_jt_j}2  }  h_{s_j\omega_j} (y_j )\big]   K_J(s_{ 1}\omega_{ 1}, \dots,  s_J\omega_J)   \,ds_J \cdots ds_{ 1}.
	\end{multline*}
	Taking the absolute values and then extending the integrals in $t_j$, $j=1,2,\dots, J$ to $\R$, we have 
	$$
	|Z_Jf(x)|\les   \int_{(S^{n-1}\times\R^n\times\R)^{J}} |F  (t_{ 1},\omega_{ 1},y_{ 1}, \dots,  t_J,\omega_J,y_J)|   |  f( \overline{x}-\gamma_{1})|  dt_{J }  dy_{J }  d\omega_{J} \cdots  dt_1 dy_1 d\omega_1. 
	$$
	Therefore, by Minkowski's integral inequality and noting that $x\to\overline x$ is an isometry), we have
	$$
	\|Z_Jf\|_{L^p}\les \|F  \|_{L^1((S^{n-1}\times\R^n\times\R)^{J})}\| f\|_{L^p}.
	$$
	The following lemma finishes the proof of $L^p$ boundedness of $Z_J$.
	\begin{lemma} For $2m<n<4m-1$, we have  
		$$\|F \|_{L^1((S^{n-1}\times\R^n\times\R)^{J})}\leq C^J    \| \la \cdot \ra^{ \frac{4m+1-n}{2}+} V(\cdot )\|_{L^2}^J, $$
		for $n=4m-1$, we have 
		$$\|F \|_{L^1((S^{n-1}\times\R^n\times\R)^{J})}\leq C^J  \|  \la x\ra^{1+} V\|_{H^{0+}}^{J}, $$
		for $n>4m-1$ and $\sigma>\frac{n-2m}{n-1}$, we have 
		$$\|F \|_{L^1((S^{n-1}\times\R^n\times\R)^{J})}\leq C^J  \| \mF(\la x\ra^{2\sigma} V)\|_{L^{\frac{n-1}{n-2m}-}}^J.$$
		Here $C$ depends on $n,m$ and the actual values of $\pm$ signs. 
	\end{lemma}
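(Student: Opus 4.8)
The plan is to bound $\|F\|_{L^1}$ by a Fubini argument that disposes of the blocks of variables $(s_j,t_j,\omega_j,y_j)$ essentially one index $j$ at a time, so that the estimate comes out in the multiplicative form $C^J\|V\|_\star^J$ of its own accord; the scheme is the one behind the single-layer bound of Lemma~\ref{lem:Geps}, iterated $J$ times and with the extra factors $\widehat V(s_j\omega_j-s_{j-1}\omega_{j-1})$ carried along. As already noted, it suffices to argue for $f\in\mathcal S$ and $\widehat V\in C_0^\infty$ and then pass to the limit.

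We first dispose of the integrations in $t_1,\dots,t_J$ and $y_1,\dots,y_J$. For each $j$, split $\R_{t_j}=\{|t_j|\les 1\}\cup\{|t_j|\gtrsim 1\}$; on the large piece we integrate by parts twice in $s_j$ against the oscillation $e^{-is_jt_j/2}$ to gain a factor $\la t_j\ra^{-2}$, exactly as in the proof of Lemma~\ref{lem:Geps} (when $n=2m+1$, and at the borderline value $n=4m-1$ below, only one clean integration by parts is available and one substitutes the finite-difference argument of that proof, which costs $|t_j|^{-1/2}$ and one extra $s_j$-derivative), and on the small piece we bound the integrand trivially. Carrying this out in all the variables, taking absolute values, and performing the $y_j$-integrations with the bound $\|\partial_{s_j}^\ell\big(s_j^{n-2m}h_{s_j\omega_j}(\cdot)\big)\|_{L^1_{y_j}}\les s_j^{\,n-2m-\ell}$ of Lemma~\ref{lem:gk}, we are reduced — up to $C^J$ many terms from the integrations by parts and the Leibniz rule — to iterated integrals of the form
\[
\int_{(S^{n-1})^J}\int_{(0,\infty)^J}\prod_{j=1}^J\la s_j\ra^{\,n-2m}\,s_j^{\,-\ell_j}\,\Big|\big(\omega_j\!\cdot\!\nabla\big)^{a_j}\widehat V\big(s_j\omega_j-s_{j-1}\omega_{j-1}\big)\Big|\;ds\,d\omega,\qquad s_0\omega_0=0,
\]
with bookkeeping constraints forcing $\ell_j$ together with the portion of $a_j$ attached to the $j$-th site to sum to at most the number of integrations by parts. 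Since $(\omega_j\!\cdot\!\nabla)^{a_j}\widehat V(\cdot-s_j\omega_j)=(-1)^{a_j}\mathcal F\big((ix\!\cdot\!\omega_j)^{a_j}V\big)(\cdot-s_j\omega_j)$, each factor is a translate along the ray $\R_+\omega_j$ of the Fourier transform of the weighted potential $(x\!\cdot\!\omega_j)^{a_j}V$.

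The heart of the matter is the remaining estimate of this iterated integral, and here the three regimes separate; we follow Yajima's treatment of the high-energy Born series for $m=1$ in \cite{YajWkp1}. The radial integrals are coupled only through the chain $s_j\omega_j-s_{j-1}\omega_{j-1}$, so they, and then the angular integrals, are estimated one node at a time by a Hölder-type inequality whose form is dictated by the size of the weight $\la s_j\ra^{n-2m}$. When $2m<n<4m-1$ this weight is mild enough that an $L^2$ estimate in $s_j$ against $\mathcal F(\la\cdot\ra^{a_j}V)$ — using a dyadic decomposition in $s_j$ and Fourier restriction to the rays — suffices at each node; summing the resulting geometric series in $J$ produces $C^J\|\la\cdot\ra^{\frac{4m+1-n}{2}+}V\|_{L^2}^J$, the exponent being $\tfrac{4m+1-n}{2}=\tfrac{n+1}{2}-(n-2m)$, which is precisely what must be subtracted from the $\la s\ra^{n-2m}$ weight so that the resulting radial integrals match the $\lambda^{(n+1)/2-2m}$ scaling of $\mR_0$ and the restriction estimates close. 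The case $n=4m-1$ is the borderline of this scheme: the residual $0+$ of smoothness that cannot be absorbed into the polynomial weight is what forces the $H^{0+}$ hypothesis. When $n>4m-1$ the weight $\la s\ra^{n-2m}$ is too large for an $L^2$ argument, and one instead passes the radial integrals through Hölder's inequality with exponent $\tfrac{n-1}{n-2m}-$, which converts the required smoothness of $V$ into the norm $\|\mathcal F(\la\cdot\ra^{2\sigma}V)\|_{L^{(n-1)/(n-2m)-}}$ with $\sigma>\tfrac{n-2m}{n-1}$.

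The main obstacle is not any single inequality but keeping the entire argument uniform in $J$: one must verify that each node of the chain contributes exactly one power of the prescribed $V$-norm and a constant depending only on $n$ and $m$, that the angular integrations and the nested arguments $s_j\omega_j-s_{j-1}\omega_{j-1}$ produce no extra $J$-dependent loss, and that summing the $O(C^J)$ terms left over from the integrations by parts and the Leibniz expansion still leaves a bound of the form $C^J\|V\|_\star^J$ — from which Theorem~\ref{thm:Born}, and hence Theorem~\ref{thm:small}, follows. Pinning down the sharp exponents and the precise $\pm$ losses is a delicate bookkeeping that balances the growth of $\la s\ra^{n-2m}$ against integrability in the radial variable and restriction of $\widehat V$ to the rays $\R_+\omega_j$, i.e.\ against the decay imposed on $V$.
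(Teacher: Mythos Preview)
Your approach diverges from the paper's at the decisive step, and the divergence is a genuine gap rather than an alternative route. You propose to iterate the integration-by-parts argument of Lemma~\ref{lem:Geps}: split $t_j$ into small and large, integrate by parts in $s_j$ on the large piece, then use the uniform $L^1$ bound $\|h_{s\omega}\|_{L^1}\les 1$ to dispose of $y_j$. This indeed reduces matters to an $L^1$ integral over $(s,\omega)^J$, which in Cartesian coordinates reads (in the simplest term)
\[
\int_{\R^{nJ}}\prod_{j=1}^J \frac{\la k_j\ra^{n-2m}}{|k_j|^{n-1}}\,\big|\widehat V(k_j-k_{j-1})\big|\,dk.
\]
But this quantity is \emph{not} controlled by $\|\la\cdot\ra^{(4m+1-n)/2+}V\|_{L^2}$. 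Already for $J=1$ the weight $|k|^{-(n-1)}$ is only barely locally integrable, so any H\"older splitting blows it up; the best one can say near the origin is $\les\|\widehat V\|_\infty\les\|V\|_{L^1}$, which is a strictly stronger hypothesis than the stated one. For $J>1$ the situation is worse: the shift $k_{j-1}$ in $\widehat V(k_j-k_{j-1})$ means you need a bound uniform in $k_{j-1}$, and no $L^1$-type Hardy inequality is available to absorb the singular weight into spatial decay of $V$. Your phrases ``$L^2$ estimate in $s_j$'' and ``Fourier restriction to the rays'' do not describe a mechanism that closes this.

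The paper's argument is structurally different and avoids this trap. Instead of integrating by parts, it applies Hausdorff--Young in the $t_j$ variables to pass to an $L^q$ norm in $s_j$; it uses the \emph{pointwise} bound $|h_{s\omega}(y)|\les |y|^{-n\pm\delta}s^{\pm\delta}$ (not merely the $L^1$ bound) to integrate out $y_j$; and after H\"older in $\omega_j$ it arrives at $\big(\int |k_j|^{(n-2m\pm\delta)q-(n-1)}|\widehat V(k_j-k_{j-1})|^q\,dk_j\big)^{1/q}$. With $q=2$ and $n<4m-1$ the exponent is negative, and now \emph{Hardy's inequality} --- which is translation-invariant --- converts the $|k_j|$-weight into the spatial weight $\la x\ra^{(4m-1-n)/2\pm\delta}$ on $V$, uniformly in $k_{j-1}$. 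The full $\la t_j\ra$ decay needed for $L^1_t$ then comes from repeating this with $t_j^{\alpha_j}$ weights (one $\partial_{s_j}$ each), complex interpolation, and a final H\"older in $t$. Lemma~\ref{lem:Geps} is used in the paper only to justify taking the limits $\epsilon_j\to 0$ and to derive the formula \eqref{ZJlimit}; it is not the engine of the norm bound.
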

	\begin{proof}
		We write $F$ as a sum of $2^{J}$ operators of the form (for each subset $\mJ$ of $\{1,2,...,J\}$)
		$$
		F_{\mJ}  (t_{ 1},\omega_{ 1},y_{ 1}, \dots,  t_J,\omega_J,y_J) = F(t_{ 1},\omega_{ 1},y_{ 1}, \dots,  t_J,\omega_J,y_J) \big[\prod_{j\in\mJ}
		\chi(y_j) \big]  \big[\prod_{j\not \in\mJ}
		\widetilde\chi(y_j) \big].
		$$
		It suffices to prove that each $F_\mJ$ satisfies the claim.
		
		Fix $r\geq 2$ and $\frac1q+\frac1r=1$. By Hausdorff-Young inequality,  we have (with $L^p(\Omega)L^q(D)=L^P(\Omega,L^q(D))$)
		\begin{multline*}\|F_{\mJ}\|_{L^1 (S^{n-1}\times\R^n)^{J} L^r(\R^{J})}\les  
		\int_{(S^{n-1}\times\R^n)^{J}}  \Big[\int_{(0,\infty)^{J} } \big[\prod_{j=1}^J
		s_j^{n-2m}   h_{s_j\omega_j} (y_j )   \big]^q    \times\\ |  K_J(s_{ 1}\omega_{ 1}, \dots,  s_J\omega_J)|^q ds_1\dots ds_J\Big]^{1/q}\big[\prod_{j\in\mJ}
		\chi(y_j) \big]  \big[\prod_{j\not \in\mJ}
		\widetilde\chi(y_j) \big]d\vec y d\vec \omega.
		\end{multline*}
		Note that, by \eqref{eqn:homega bound} in the proof of Lemma~\ref{lem:gk} above (for $0<\delta\leq 1$)
		$$|h_{s\omega}(y)|\les s^n\min((s|y|)^{-n-\delta}, (s|y|)^{-n+\delta})  \les \chi(y)  |y|^{-n+\delta} s^\delta  + \widetilde \chi(y)  |y|^{-n-\delta} s^{-\delta}.
		$$
		Since $\chi(y)  |y|^{-n+\delta} \in L^1$ and $\widetilde \chi(y)  |y|^{-n-\delta}\in L^1$ for any $\delta>0$, we can bound the norm above by  
		$$\int_{(S^{n-1})^{J}}  \Big[\int_{(0,\infty)^{J}}\big[\prod_{j\in\mJ}^J s_j^{(n-2m+\delta)q}    \big] 
		\big[\prod_{j\not \in\mJ}^J s_j^{(n-2m-\delta)q}    \big]  |  K_J(s_{ 1}\omega_{ 1}, \dots,  s_J\omega_J)|^q d\vec s \Big]^{1/q} d\vec \omega.
		$$
		By Holder in $\omega_j$ integrals we conclude that
		\begin{multline} \label{eq:FL1Lr}
		\|F \|_{L^1 (S^{n-1}\times\R^n)^{J} L^r(\R^{J})}\les\Big[\int_{\R^{nJ}}    \big[\prod_{j\in\mJ}^J|k_j|^{(n-2m+\delta)q-n+1}     \big] 
		\big[\prod_{j\not \in\mJ}^J |k_j|^{(n-2m-\delta)q-n+1}    \big]  \times \\  |  K_J(k_1, \dots,  k_J )|^q dk_1\dots dk_J\Big]^{1/q}.  
		\end{multline} 
		Similarly, (here $\alpha_j=0$  or $1$ independently) 
		\begin{multline*}
		\|t_1^{\alpha_1}\dots t_J^{\alpha_J}F_\mJ\|_{L^1 (S^{n-1}\times\R^n)^{J} L^r(\R^{J})}\les
		\\\int_{(S^{n-1}\times\R^n)^{J}}  \Big[\int_{(0,\infty)^{J} }\Big|\partial_{s_1}^{\alpha_1}\dots \partial_{s_J}^{\alpha_J}\prod_{j= 1}^J \big(s_j^{n-2m}    h_{s_j\omega_j} (y_j ) \big)   \times\\ |  K_J(s_{ 1}\omega_{ 1}, \dots,  s_J\omega_J)|^q ds_1\dots ds_J\Big]^{1/q}\big[\prod_{j\in\mJ}
		\chi(y_j) \big]  \big[\prod_{j\not \in\mJ}
		\widetilde\chi(y_j) \big]d\vec y d\vec \omega.
		\end{multline*}
		Since $\partial_s h_{s\omega}$ satisfies the same bounds as $\frac1s h_{s\omega}$, proceeding as above, we obtain the  estimate  
		\begin{multline*}\|t_1^{\alpha_1}\dots t_J^{\alpha_J}F_\mJ \|_{L^1 (S^{n-1}\times\R^n)^{J} L^r(\R^{J})}\les 
		\Big[\int_{\R^{nJ}}  \big[\prod_{j\in\mJ} |k_j|^{(n-2m+\delta)q-n+1}     \big] 
		\big[\prod_{j\not \in\mJ}  |k_j|^{(n-2m-\delta)q-n+1}    \big]  \times\\ \Big|  \prod_{j=1}^J(\nabla_{k_j}^{\alpha_j}+|k_j|^{-\alpha_j })  K_J(k_1, \dots,  k_J )\Big|^q  dk_1\dots dk_J\Big]^{1/q}.  
		\end{multline*}
		Using Hardy's inequality, this implies that  
		\begin{multline}\label{eq:alphaFL1Lr}\|t_1^{\alpha_1}\dots t_J^{\alpha_J}F_\mJ  \|_{L^1 (S^{n-1}\times\R^n)^{J} L^r(\R^{J})}\les 
		\Big[\int_{\R^{nJ}}  \big[\prod_{j\in\mJ} |k_j|^{(n-2m+\delta)q-n+1}     \big] 
		\big[\prod_{j\not \in\mJ}  |k_j|^{(n-2m-\delta)q-n+1}    \big]  \times\\ \Big|  \prod_{j=1}^J \nabla_{k_j}^{\alpha_j}  K_J(k_1, \dots,  k_J )\Big|^q  dk_1\dots dk_J\Big]^{1/q}.  
		\end{multline}
		Let $2m<n<4m-1$. Applying \eqref{eq:FL1Lr} with $0< \delta\ll 1$ and $q=r=2$, we obtain 
		$$
		\|F_\mJ\|^2_{L^1 (S^{n-1}\times\R^n)^{J} L^2(\R^{J})}\les  \int_{\R^{nJ}}   \big[\prod_{j\in\mJ} |k_j|^{ n-4m+1+2\delta}     \big] 
		\big[\prod_{j\not \in\mJ}  |k_j|^{ n-4m+1-2\delta }    \big]    |  K_J(k_1, \dots,  k_J )|^2 d\vec k.  
		$$
		Note that by Hardy's inequality the integral in $k_J$ is bounded by 
		$$
		\int | |D_{k_J}|^{\frac{4m-1-n}{2}\pm\delta} \widehat V(k_{J-1}-k_J)|^2 dk_J\les   \| \la \cdot \ra^{\frac{4m-1-n}{2}\pm\delta} V(\cdot )\|_{L^2}^2\les \| \la \cdot \ra^{\frac{4m-1-n}{2}+\delta} V(\cdot )\|_{L^2}^2. 
		$$
		Repeated application of this inequality yields
		$$
		\|F_\mJ\|_{L^1 (S^{n-1}\times\R^n)^{J} L^2(\R^{J})}\les    \| \la \cdot \ra^{\frac{4m-1-n}{2}+\delta} V(\cdot )\|_{L^2}^J.
		$$
		Similarly, applying \eqref{eq:alphaFL1Lr} with $r=q=2$  and  $0<\delta\ll 1$ yield
		$$
		\|t_1^{\alpha_1}\dots t_J^{\alpha_J}F_\mJ  \|_{L^1 (S^{n-1}\times\R^n)^{J} L^2(\R^{J})} \les     \| \la \cdot \ra^{2+\frac{4m-1-n}{2}+\delta} V(\cdot )\|_{L^2}^J.
		$$
		Writing 
		$$\prod_{j=1}^J (1+|t_j|) = \sum_{\alpha_1,\dots, \alpha_J\in\{0,1\}} |t_1^{\alpha_1}\dots t_J^{\alpha_J}|,$$ these inequalities imply with that 
		$$\big\|\prod_{j=1}^J \la t_j\ra F  \big\|_{L^1 (S^{n-1}\times\R^n)^{J} L^2(\R^{J})}\les  \| \la \cdot \ra^{2+\frac{4m-1-n}{2}+\delta} V(\cdot )\|_{L^2}^J,
		$$
		which by multilinear complex interpolation leads to 
		$$\big\|\prod_{j=1}^J \la t_j\ra^{\frac12+} F_\mJ \big\|_{L^1 (S^{n-1}\times\R^n)^{J} L^2(\R^{J})}\les  \| \la \cdot \ra^{1+\frac{4m-1-n}{2}+\delta+} V(\cdot )\|_{L^2}^J.
		$$
		This proves the claim for $n<4m-1$ by Cauchy-Schwarz in $t$ integrals.
		
		For $n=4m-1$,  with $q=2-$, $r=2+$,  \eqref{eq:FL1Lr} implies
		$$
		\|F_\mJ \|^{2-}_{L^1 (S^{n-1}\times\R^n)^{J} L^{2+}(\R^{J})}\les  \int_{\R^{nJ}}     
		\big[\prod_{j\not \in\mJ}^J |k_j|^{0-}   \big]   |  K_J(k_1, \dots,  k_J )|^{2-} dk_1\dots dk_J.
		$$
		By Hardy's inequality, the integral in $k_J$ is
		\begin{multline*}  
		 \les \int \big||D_{k_J}|^{0+} \mF(V(\cdot)e^{ik_{J-1}\cdot})(k_J)\big|^{2-}  dk_J
		 \les \int \big|  \mF(\la \cdot\ra^{0+} V(\cdot)e^{ik_{J-1}\cdot})(k_J)\big|^{2-}dk_J \\ \les \int \big|  \mF(\la \cdot\ra^{0+} V(\cdot))(k_J)\big|^{2-}dk_J
\les \Big[ \int  \la k_J\ra^{0+}\big|  \mF(\la \cdot\ra^{0+}  V(\cdot))(k_J)\big|^{2}dk_J\Big]^{\frac{2-}2}		\les   \|  \la \cdot \ra^{0+} V(\cdot ) \|_{H^{0+}}^{2-}.  
		\end{multline*}   
	Repeating the same argument in the remaining variables yield 
	$$
		\|F_\mJ \|_{L^1 (S^{n-1}\times\R^n)^{J} L^{2+}(\R^{J})}\les \|  \la \cdot \ra^{0+} V(\cdot ) \|_{H^{0+}}^J.
		$$	
		Similar modifications in the other inequalities imply the claim in this case. 
		
		When $n>4m-1$, we apply the inequalities with $0<\delta \ll 1$ and $q=\frac{n-1-\delta}{n-2m} $, $r=\frac{n-1-\delta}{2m-1-\delta}$ to obtain
		$$\|F_\mJ \|_{L^1 (S^{n-1}\times\R^n)^{J} L^r(\R^{J})}\les \Big[\int_{\R^{nJ}}   \prod_{j\not\in \mJ }   |k_j|^{0- }    | K_J(k_1, \dots,  k_J )|^q dk_1\dots dk_J\Big]^{1/q}
		\les \|\mF(\la \cdot \ra^{0+} V(\cdot))\|_{L^q}^J. $$
		Similarly, we obtain 
		$$
		\|t_1^{\alpha_1}\dots t_J^{\alpha_J}F_\mJ\|_{L^1 (S^{n-1}\times\R^n)^{J} L^r(\R^{J})}\les \|\mF(\la \cdot \ra^{2+} V(\cdot))\|_{L^q}^J, 
		$$
		which implies  that 
		$$
		\big\|\prod_{j=1}^J \la t_j\ra F_\mJ \big\|_{L^1 (S^{n-1}\times\R^n)^{J} L^{\frac{n-1-\delta}{2m-1-\delta}}(\R^{J})}   \les 
		\| \mF(\la x\ra^{2+} V)\|_{L^{\frac{n-1-\delta}{n-2m}}}^J.  
		$$
		Interpolating the two bounds  we obtain (with $\sigma>\frac{n-2m}{n-1-\delta}$)
		$$
		\big\|\prod_{j=1}^J \la t_j\ra^\sigma F_\mJ\big \|_{L^1 (S^{n-1}\times\R^n)^{J} L^{\frac{n-1-\delta}{2m-1-\delta}}(\R^{J})}   \les 
		\| \mF(\la x\ra^{2\sigma} V)\|_{L^{\frac{n-1-\delta}{n-2m}}}^J, 
		$$
		which implies the claim by H\"older's inequality in $t$ integrals.  
		
	\end{proof}
	
Keeping track of the relationship between $q,r,\sigma$ and $\delta$ in the proof above leads to the statement in Theorem~\ref{thm:Born}.

\section{Low Energies: Odd dimensions}\label{sec:low}

  Throughout this section we consider odd dimensions $n$, as the Schr\"odinger resolvent has a closed form representation, \eqref{eqn:R0 explicit0}, that is entire. We prove that the low energy part of the wave operators are bounded on the range $1<p<\infty$ for odd $n$. We show in Section~\ref{sec:even} how to adapt the arguments here to account for the logarithmic singularities present in even dimensions.  Further, in Section~\ref{sec:low2} we show that for odd $n$ it is possible to capture boundedness on the endpoints of $p=1,\infty$. 

Having controlled the contribution of the Born series terms to \eqref{eqn:wave op defn}, to establish the claim of Theorem~\ref{thm:main} we need to show the boundedness of the tail of the Born series in \eqref{eqn:born identity}.  Noting that spectral localization, multiplying by the cut-off $\chi(\lambda)$ in \eqref{eqn:wave op defn} is bounded on $L^p$,  we need only control the contribution of
$$
-\frac{m}{\pi i} \int_0^\infty \chi(\lambda) \lambda^{2m-1}\mR_V^+(\lambda^{2m})V[\mR_0^+-\mR_0^-](\lambda^{2m})\, d\lambda.
$$
With $v=|V|^{\f12}$, $U(x)=1$ if $V(x)\geq 0$ and $U(x)=-1$ if $V(x)<0$, we define $M^+(\lambda)=U+v\mR_0^+(\lambda^{2m})v$.  We also define $w(x)=U(x)v(x)$.
Using the symmetric resolvent identity, one has
$$
\mR_V^+(\lambda^{2m})V=\mR_0^+(\lambda^{2m})vM^+(\lambda)^{-1}v,
$$
which is valid in a sufficiently small neighborhood of $\lambda=0$.   We show

\begin{prop}\label{prop:low energy}
	
	Let $n>2m$ be odd.  If $|V(x)|\les \la x\ra^{-\beta}$ for some $\beta>n+2$, then the operator defined by
	$$
	-\frac{m}{\pi i} \int_0^\infty \chi(\lambda) \lambda^{2m-1}\mR_0^+(\lambda^{2m})vM^+(\lambda)^{-1}v[\mR_0^+-\mR_0^-](\lambda^{2m})\, d\lambda
	$$
	extends to a bounded operator on $L^p(\R^n)$ for all $1<p<\infty$.
	
\end{prop}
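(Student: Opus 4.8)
The symmetric resolvent identity has already reduced the problem to an integral over $0<\lambda\lesssim\lambda_0$ in which the only ``perturbed'' factor is $M^+(\lambda)^{-1}$. The plan is to expand $M^+(\lambda)^{-1}$ about $\lambda=0$, split off the fixed ($\lambda$-independent) leading operator $M^+(0)^{-1}$, substitute the low-energy expansions of the free resolvents, and estimate the resulting finite family of oscillatory $\lambda$-integrals. This follows Yajima's scheme for $m=1$ and its fourth-order adaptation, the new ingredient being to carry the $m$-th order structure through \eqref{eqn:Resolv}.

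\emph{Resolvent expansions.} For odd $n$, \eqref{eqn:R0 explicit0} and \eqref{eqn:Resolv} give that, on $|\lambda|<2\lambda_0$, $\mR_0^{\pm}(\lambda^{2m})(x,y)$ is a finite sum of building blocks $c\,\lambda^{k}\,|x-y|^{k+2m-n}\,e^{\pm i\theta_\ell\lambda|x-y|}$, $k\ge 0$, with $\theta_\ell=\cos(\pi\ell/m)$ (carrying also a factor $e^{-c_\ell\lambda|x-y|}$ when $\ell\notin\{0,m-1\}$), plus a remainder $\lambda^{N}\widetilde{\mathcal E}^{\pm}_N(\lambda)(x,y)$, where each $\partial_\lambda$ applied to a block or the remainder costs at most a factor of $|x-y|$ and $N$ is taken as large as needed. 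The point at which $n>2m$ is essential is that \emph{no negative powers of $\lambda$ occur}: the would-be singular contributions $\lambda^{3-2m},\dots,\lambda^{-1}$ cancel upon summing the roots of unity in \eqref{eqn:Resolv} (equivalently, the relevant Taylor coefficients of the second-order Bessel kernel vanish), so $\mR_0^{\pm}(\lambda^{2m})(x,y)=c_{n,m}|x-y|^{2m-n}+O(\lambda\,|x-y|^{2m-n+1})$. Correspondingly $[\mR_0^+-\mR_0^-](\lambda^{2m})(x,y)$ equals $\lambda^{n-2m}$ times a bounded smooth function of $\lambda|x-y|$, again with each $\partial_\lambda$ costing one factor of $|x-y|$.

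\emph{Expansion of $M^+(\lambda)^{-1}$ and reduction.} Since zero energy is regular, $M^+(0)=U+v\mR_0(0)v$ is invertible on $L^2$ (standard Fredholm theory: $v\mR_0(0)v$ is compact for the potentials in question, $M^+(0)$ has index zero, and regularity forces its kernel to be trivial). By the previous step $M^+(\lambda)-M^+(0)=v[\mR_0^+(\lambda^{2m})-\mR_0(0)]v=O(\lambda)$ on $L^2$, with controlled $\lambda$-derivatives because the extra $|x-y|$ factors produced by $\partial_\lambda$ are absorbed by $v$ when $\beta>n+2$. A Neumann series therefore yields, for small $\lambda$, a finite expansion $M^+(\lambda)^{-1}=\sum_{k=0}^{N}\lambda^{k}\,\mathcal Q_k(\lambda)+(\text{remainder})$, each $\mathcal Q_k$ of the form $v\,\mathcal B\,v$ with $\mathcal B$ bounded on $L^2$ and with controlled $\lambda$-derivatives, the remainder carrying an extra power $\lambda^{N+1}$ and still containing $M^+(\lambda)^{-1}$. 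Substituting this together with the resolvent expansions into the integral of the Proposition reduces the claim to the $L^p$-boundedness of finitely many model operators of the form
\[
\Omega_{\mathrm{mod}}:=\int_0^{\infty}\chi(\lambda)\,\lambda^{a}\,A(\lambda)\,\Gamma\,B^{\pm}(\lambda)\,d\lambda,
\]
where $A$ is a single building block from $\mR_0^+$, $B^{\pm}$ a single block from $[\mR_0^+-\mR_0^-]$, $\Gamma$ one of finitely many fixed operators $v\,\mathcal B\,v$, and $a$ the accumulated power of $\lambda$ at the origin; counting $\lambda^{2m-1}$, the $\lambda^{n-2m}$ from the difference term and the nonnegative block exponents, $a\ge n-1\ge 2$, so there is no divergence at $\lambda=0$. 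The leftover remainder term, carrying an additional power of $\lambda$, is strictly easier and is handled in the same way.

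\emph{Estimating a model operator.} For $f\in\mathcal S$,
\[
\Omega_{\mathrm{mod}}f(x)=\int_{\R^{3n}}\Gamma(x_1,x_2)\,|x-x_1|^{b}\Big[\int_0^{\infty}\chi(\lambda)\,\lambda^{a}\,\psi_1(\lambda|x-x_1|)\,\psi_2(\lambda|x_2-y|)\,e^{i\lambda\rho}\,d\lambda\Big]f(y)\,dx_1\,dx_2\,dy,
\]
with $\rho=\theta_1|x-x_1|\pm\theta_2|x_2-y|$, $b\le 2m-n<0$, $\psi_1,\psi_2$ bounded and smooth, and $\Gamma(x_1,x_2)$ decaying in both arguments thanks to $\beta>n+2$. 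The inner $\lambda$-integral is the Fourier transform, evaluated at $\rho$, of a compactly supported function of $\lambda$ that is smooth except for a corner of order $a$ at the origin; writing $\int_0^{\infty}d\lambda=\tfrac12\int_{\R}d\lambda+\tfrac12\int_{\R}\sgn(\lambda)\,d\lambda$ splits it into a Schwartz function of $\rho$ plus a constant times the Hilbert transform in $\rho$ of a Schwartz function (with seminorms growing only polynomially in $|x-x_1|$ and $|x_2-y|$). For the Schwartz piece one argues as in Section~\ref{sec:Born}: converting the $\lambda$-integration into a convolution variable and using the decay of $\Gamma$, the bound $b<0$, and the integral estimates of Section~\ref{sec:int ests}, one represents $\Omega_{\mathrm{mod}}f$ as an absolutely convergent average of translates, reflections and mollifications of $f$, hence bounded on every $L^p$ by Minkowski's inequality. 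The Hilbert-transform piece is the source of the endpoint loss: after passing to polar coordinates about $x$ and about $y$ it reduces to a genuine singular integral on $\R^n$ composed with operators whose kernels are absolutely integrable, hence bounded on $L^p$ only for $1<p<\infty$. The genuine difficulty is the resonant region $\big|\theta_1|x-x_1|-\theta_2|x_2-y|\big|\lesssim 1$ with $|x-y|$ large, where the phase gives no decay and the required control in $|x-y|$ must be extracted from the spatial power $|x-x_1|^{b}$ and, above all, from the decay weights carried by $\Gamma$; this is exactly where $\beta>n+2$ is consumed, and where the additional decay exploited in Section~\ref{sec:low2} suffices to recover the endpoints $p=1,\infty$.
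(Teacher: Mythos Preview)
Your outline has the right spirit but two substantive gaps prevent it from going through.

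First, you do not address the case $n>4m$. There $|x-\cdot|^{2m-n}$ fails to be locally $L^2$, so writing $\Gamma=v\mathcal B v$ with $\mathcal B$ merely $L^2$-bounded is not enough to control the pairing of the leading resolvent kernel against $\Gamma$ in the $x_1$-integral. The paper resolves this by iterating the resolvent identity further and inserting the smoothed operator $A(\lambda)$ of \eqref{eq:Alambda} on each side of $M^+(\lambda)^{-1}$ (Lemma~\ref{lem:A lems}); this removes the local singularity and yields the pointwise bound \eqref{eq:tildegamma} needed downstream. Your reduction has no analogue of this step.

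Second, the estimation of the model operator is not actually carried out. Your plan is to view the $\lambda$-integral as a one-dimensional Fourier transform in $\rho=\theta_1|x-x_1|\pm\theta_2|x_2-y|$ and split into a Schwartz piece plus a Hilbert-transform piece, then claim the latter ``reduces to a genuine singular integral on $\R^n$'' after passing to polar coordinates. But $\rho$ depends on the inner variables $x_1,x_2$ as well as on $x,y$, so after integrating out $x_1,x_2$ there is no evident Calder\'on--Zygmund structure in $(x,y)$; you have not identified the kernel of this purported singular integral or checked any size or cancellation condition. The paper's argument is different and more direct: it packages $\mR_0^+$ as $e^{i\lambda r}r^{2m-n}F(\lambda r)$ with the crucial decay $|F^{(N)}(r)|\lesssim\langle r\rangle^{\frac{n+1}{2}-2m-N}$ (Lemma~\ref{prop:F}), integrates by parts up to $\tfrac{n+1}{2}$ times in $\lambda$ to produce a factor $\langle\lambda(r_1-r_2)\rangle^{-N}$, and then decomposes the spatial integral into four regions according to the relative sizes of $r_1=|x-z_1|$ and $r_2=|y-z_2|$ (Lemma~\ref{lem:low tail low d}). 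The regions $r_1,r_2\lesssim1$ and $r_1\approx r_2\gg1$ yield admissible kernels; the loss of endpoints arises only from the asymmetric regions $r_2\gg\langle r_1\rangle$ and its mirror, where a pointwise bound of the schematic form $r_1^{2m-n}\langle r_1\rangle^{-n/p+}\langle r_2\rangle^{-n/p'-}$ is established and H\"older's inequality finishes the job---no singular-integral machinery is invoked. Your appeal to ``argues as in Section~\ref{sec:Born}'' is also misplaced: that section uses a Fourier-multiplier framework specific to the Born-series structure and does not transfer here.

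A smaller point: your building-block description of $\mR_0^\pm$ conflates the explicit formula coming from \eqref{eqn:Resolv}--\eqref{eqn:R0 explicit0} (whose individual $\ell$-terms carry negative powers $\lambda^{j+2-2m}$ for $j<2m-2$, cancelling only in the full sum over $\ell$) with a Taylor expansion in $\lambda$ (whose coefficients carry no oscillatory phase). The representation in Lemma~\ref{prop:F} avoids this ambiguity and is precisely what makes the integration-by-parts bound work.
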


We utilize the representation of the $m^{th}$ order resolvent frequently.  for notational convenience we denote $(\frac{d}{dr})^n F(r)$ by $F^{(n)}(r)$.

\begin{lemma}\label{prop:F} Let $n>2m$ be odd. Then,   we have the following representation of the free resolvent
	$$
	\mR_0^+(\lambda^{2m})(y,u)
	=  \frac{e^{i\lambda|y-u|}}{|y-u|^{n-2m}} F(\lambda |y-u| ).$$
	Here   $|F^{(N)}(r)|\les  \la r\ra^{\frac{n+1}2 -2m-N}$, $N=0,1,2,...$ 
\end{lemma}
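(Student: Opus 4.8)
The plan is to combine the splitting \eqref{eqn:Resolv} with the closed form \eqref{eqn:R0 explicit0} to produce an explicit scalar profile, and then to verify the two assertions about $F$ — smoothness on $[0,\infty)$ and the derivative bounds — by isolating the single non-oscillatory term that appears. First I would substitute \eqref{eqn:R0 explicit0} into \eqref{eqn:Resolv}; writing $r=|y-u|$ and choosing the square roots $\omega_\ell^{1/2}:=e^{i\pi\ell/m}$ (the ones with $\Im\,\omega_\ell^{1/2}\ge 0$, which is what realizes $R_0((\omega_\ell^{1/2}\lambda)^2)=R_0(\omega_\ell\lambda^2)$ as the limiting, outgoing resolvent) yields
$$
\mR_0^+(\lambda^{2m})(y,u)=\frac{1}{m\lambda^{2m-2}r^{n-2}}\sum_{\ell=0}^{m-1}\omega_\ell\,e^{i\omega_\ell^{1/2}\lambda r}\,P_n(\omega_\ell^{1/2}\lambda r),\qquad P_n(w):=\sum_{j=0}^{\frac{n-3}{2}}c_{n,j}w^j .
$$
Pulling out $r^{2m-2}/r^{n-2}=r^{-(n-2m)}$ and setting $\rho=\lambda r$, this is exactly $\frac{e^{i\lambda r}}{r^{n-2m}}F(\lambda r)$ with
$$
F(\rho):=\frac{e^{-i\rho}}{m\rho^{2m-2}}\sum_{\ell=0}^{m-1}\omega_\ell\,e^{i\omega_\ell^{1/2}\rho}\,P_n(\omega_\ell^{1/2}\rho),
$$
so the lemma reduces to analysing this single-variable function.

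The one delicate point is that $F$ is smooth — in fact real-analytic — at $\rho=0$, i.e.\ that the apparent singularity $\rho^{-(2m-2)}$ cancels. The numerator $N(\rho):=\sum_{\ell}\omega_\ell e^{i\omega_\ell^{1/2}\rho}P_n(\omega_\ell^{1/2}\rho)$ is an entire function, so it suffices to show $N(\rho)=O(\rho^{2m-2})$ as $\rho\to0$; then $N(\rho)/\rho^{2m-2}$ is entire and $F$ is analytic on $[0,\infty)$. Expanding $e^{iw}P_n(w)=\sum_{k\ge0} a_kw^k$ one has $N(\rho)=\sum_k a_k\big(\sum_{\ell=0}^{m-1}e^{i\pi\ell(k+2)/m}\big)\rho^k$; the geometric sum in $\ell$ vanishes for every even $k$ with $k+2\not\equiv 0\pmod{2m}$, and for odd $k$ one uses that $a_k=0$ for all odd $k$ with $1\le k\le n-4$. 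This last fact is standard: from the closed form $R_0(\lambda^2)(x,y)=c_n(-r^{-1}\partial_r)^{\frac{n-3}{2}}(e^{i\lambda r}/r)$ one reads off $a_k=c_n(-1)^{\frac{n-3}{2}}\frac{i^k}{k!}(k-1)(k-3)\cdots(k-n+4)$, which vanishes precisely for odd $k\in\{1,3,\dots,n-4\}$; equivalently it is the statement that zero energy is a regular point of $(-\Delta)^m$ when $n>2m$, so that $\mR_0^+(\lambda^{2m})(y,u)\to c_{n,m}|y-u|^{2m-n}$ as $\lambda\to0$ and the entire function $N$ is forced to be divisible by $\rho^{2m-2}$. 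Since $n>2m$ forces $2m-3\le n-4$, the coefficients of $\rho^0,\dots,\rho^{2m-3}$ in $N$ all vanish, which is exactly what is needed.

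For the derivative bounds I would split off the $\ell=0$ term. Because $\omega_0^{1/2}=1$, its contribution to $F$ is just $P_n(\rho)/(m\rho^{2m-2})$, a polynomial of degree $\frac{n-3}{2}$ divided by $\rho^{2m-2}$, whose $N$-th derivative is $O\big(\rho^{\frac{n-3}{2}-2m+2-N}\big)=O\big(\rho^{\frac{n+1}{2}-2m-N}\big)$ for $\rho\ge1$. For $\ell=1,\dots,m-1$ the corresponding term of $F$ equals $\frac{\omega_\ell}{m\rho^{2m-2}}e^{i(\omega_\ell^{1/2}-1)\rho}P_n(\omega_\ell^{1/2}\rho)$, and since $\Im(\omega_\ell^{1/2}-1)=\sin(\pi\ell/m)>0$, this term together with all of its $\rho$-derivatives is $O(e^{-c\rho})$ with $c=\min_{1\le\ell\le m-1}\sin(\pi\ell/m)>0$, hence $\lesssim_N\langle\rho\rangle^{\frac{n+1}{2}-2m-N}$ for $\rho\ge1$. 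On the compact interval $0\le\rho\le1$ the estimate $|F^{(N)}(\rho)|\lesssim_N 1\asymp\langle\rho\rangle^{\frac{n+1}{2}-2m-N}$ follows from the smoothness established in the previous step. Combining the two ranges gives $|F^{(N)}(r)|\lesssim\langle r\rangle^{\frac{n+1}{2}-2m-N}$ for all $r\ge0$, which is the claim.

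The heart of the argument — and the step I expect to require the most care — is the cancellation at $\rho=0$: the factor $\lambda^{2-2m}$ coming from \eqref{eqn:Resolv} is genuinely singular, and it is only the combination of the vanishing root-of-unity sums with the vanishing of the odd-index Taylor coefficients of the scalar resolvent profile $e^{iw}P_n(w)$ that removes it. This is precisely where the hypothesis $n>2m$ enters; away from $\rho=0$ the estimates are routine bookkeeping around the explicit formula for $F$.
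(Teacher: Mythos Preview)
Your proof is correct and follows essentially the same route as the paper: write $F(r)=g(r)/r^{2m-2}$ via the splitting identity and the odd-dimensional closed form, read off the large-$r$ bounds from the $\ell=0$ polynomial term plus the exponential decay of the $\ell\ge 1$ terms, and for small $r$ argue that $g$ vanishes to order $2m-2$ at the origin. The only difference is that where the paper appeals to an external Taylor expansion (Proposition~2.4 of \cite{soffernew}) to obtain this vanishing, you carry out the cancellation directly by computing the Taylor coefficients $a_k$ of $e^{iw}P_n(w)$ and combining the vanishing of the root-of-unity sums for even $k<2m-2$ with $a_k=0$ for odd $k\le n-4$; this is a nice self-contained alternative to the citation.
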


\begin{proof}
	
	By the splitting identity, and \eqref{eqn:R0 explicit0} we have
	\begin{align*}
	\mR_0^+(\lambda^{2m})(y,u)&=  \frac{1}{ m\lambda^{2m-2}}\big[R_0^+ ( \lambda^2)(y,u)+
	\sum_{\ell=1}^{m-1} \omega_\ell R_0^+ ( \omega_\ell \lambda^2)(y,u)\big]\\
	&=  \frac{e^{i\lambda| y-u|}}{ m\lambda^{2m-2}|y-u|^{n-2}}\big[P_{\frac{n-3}2} (\lambda|y-u|)   +
	\sum_{\ell=1}^{m-1} \omega_\ell e^{i(\omega_\ell^{\f12}-1)\lambda|y-u|} P_{\frac{n-3}2} (\omega_\ell^{\f12} \lambda |y-u| )   \big] 
	\end{align*}
	Here $P_{k}(s)$ indicates a polynomial of degree $k$ in $s$, the exact coefficients are not important.  Therefore,
	$$
	F(r)  =  \frac{1}{ mr^{2m-2} }\big[P_{\frac{n-3}2} (r)   +
	\sum_{\ell=1}^{m-1} \omega_\ell e^{i(\omega_\ell^{\f12}-1)r} P_{\frac{n-3}2} (\omega_\ell^{\f12}r)   \big]=:\frac{g(r)}{r^{2m-2}}.
	$$
	Note that $g$ is entire and bounded by a constant multiple of $\la r\ra^{\frac{n-3}2}$ on the positive real line. Moreover,  $|\partial_r^N g(r)|\les \la r\ra^{\frac{n-3}2-N}$ for each $N\in\mathbb N$ and $r>0$. By a Taylor series expansion, see for example Proposition~2.4 in \cite{soffernew}, the resolvent is bounded in $\xi$ as $|\xi|\to 0$ between suitable weighted $L^2$ spaces, and has a series expansion in $|\xi|\,|y-u|$ near $\xi=0$.   This implies that $g$ has a zero of degree $\geq 2m-2$ at $0$, which implies the first claim.
\end{proof}

To prove Proposition~\ref{prop:low energy}, we need to understand the operator $M^+(\lambda)^{-1}$.  By the assumption that zero energy is regular,  $M^{+}(\lambda)^{-1}$ is a bounded operator.  To show this, we use the following low energy bounds on the resolvent.
\begin{lemma}\label{lem:low resolv sup}
	Let $n>2m $ be odd. 
	We have the following bounds on the derivatives of the resolvent.  For $k=1,2,\dots,$ we have
	$$
	\sup_{0<\lambda <1} |\lambda^{k-1}\partial_\lambda^k \mR_0(\lambda^{2m})(x,y)| \les 
	|x-y|^{2m+1-n}+|x-y|^{k-(\frac{n-1}{2})}.
	$$
	
\end{lemma}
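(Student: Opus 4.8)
The plan is to reduce the estimate to a one-variable statement about the profile function $F$ furnished by Lemma~\ref{prop:F}. Write $r=|x-y|$ and set $G(s):=e^{is}F(s)$, so that $\mR_0^+(\lambda^{2m})(x,y)=r^{2m-n}\,G(\lambda r)$. The Leibniz rule, combined with the bounds $|F^{(j)}(s)|\les\la s\ra^{\frac{n+1}{2}-2m-j}$ for $j=0,1,2,\dots$ and the fact that $\la s\ra\geq 1$, yields the single clean estimate
\[
|G^{(k)}(s)|\les \la s\ra^{\frac{n+1}{2}-2m},\qquad s>0,\ k=0,1,2,\dots,
\]
since in the sum the $j=0$ term dominates. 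By the chain rule $\partial_\lambda^k\mR_0^+(\lambda^{2m})(x,y)=r^{2m-n+k}G^{(k)}(\lambda r)$, so
\[
\big|\lambda^{k-1}\partial_\lambda^k\mR_0^+(\lambda^{2m})(x,y)\big|\les \lambda^{k-1}\,r^{2m-n+k}\,\la\lambda r\ra^{\frac{n+1}{2}-2m}.
\]

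Next I would split into the two regimes $\lambda r\leq 1$ and $\lambda r>1$. When $\lambda r\leq 1$ we have $\la\lambda r\ra\approx 1$, and since $k\geq 1$ and $\lambda\leq 1/r$ it follows that $\lambda^{k-1}\leq r^{1-k}$; substituting gives the bound $r^{2m+1-n}$. When $\lambda r>1$ — which forces $r>1$ since $\lambda<1$ — we have $\la\lambda r\ra\approx\lambda r$, so the right-hand side is comparable to $\lambda^{a}\,r^{k-\frac{n-1}{2}}$ with $a=k-1+\tfrac{n+1}{2}-2m$. If $a\geq 0$ then $\lambda<1$ gives $\lambda^{a}\leq 1$ and we are left with $r^{k-\frac{n-1}{2}}$; if $a<0$ then $\lambda>1/r$ gives $\lambda^{a}<r^{-a}$, and using $\tfrac{n+1}{2}+\tfrac{n-1}{2}=n$ one finds $r^{-a}\cdot r^{k-\frac{n-1}{2}}=r^{2m+1-n}$. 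In every case the quantity is $\les r^{2m+1-n}+r^{k-\frac{n-1}{2}}$, which is the claim; the bound for $\mR_0^-$ follows by complex conjugation.

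I do not expect a serious obstacle: the argument is entirely elementary once Lemma~\ref{prop:F} is available. The one point that needs care is the bookkeeping of exponents in the region $\lambda r>1$. The power of $\lambda$ that appears there, $a=k-1+\tfrac{n+1}{2}-2m$, is genuinely negative for the smallest admissible odd dimension $n=2m+1$ and small $k$, so it cannot simply be discarded using $\lambda<1$; instead it must be traded against a power of $r$ via $\lambda>1/r$, and this is precisely the mechanism that produces the $r^{2m+1-n}$ term in the stated bound.
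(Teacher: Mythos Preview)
Your proof is correct and follows essentially the same approach as the paper: both use the representation from Lemma~\ref{prop:F} to obtain the intermediate bound $\lambda^{k-1}r^{2m-n+k}\la\lambda r\ra^{\frac{n+1}{2}-2m}$, then split into the regimes $\lambda r\leq 1$ and $\lambda r>1$, with the same case analysis on the sign of the exponent $k-1+\tfrac{n+1}{2}-2m$ in the latter regime. The only difference is cosmetic---you package $e^{is}F(s)$ as a single function $G$, whereas the paper writes out the Leibniz sum explicitly---and your handling of the case $a<0$ via $\lambda>1/r$ is exactly what the paper does.
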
 

\begin{proof}
	
	In all cases we use the expansions in Lemma~\ref{prop:F}.  By the product and chain rules, we have
	\begin{multline*}
	\big|\partial_\lambda^k \mR_0(\lambda^{2m})(x,y)\big|=
	\bigg|\frac{1}{|x-y|^{n-2m}} \sum_{\ell=0}^{k} \binom{k}{\ell} \big(\partial_\lambda^{k-\ell}e^{i\lambda |x-y|}\big) \big(\partial_\lambda^\ell F(\lambda |x-y|) \big)\bigg|\\
	\les |x-y|^{2m-n+k} \sum_{\ell=0}^k \la \lambda |x-y| \ra^{\frac{n+1}{2}-2m-\ell} \les |x-y|^{2m-n+k} \la \lambda |x-y|\ra^{\frac{n+1}{2}-2m}.
	\end{multline*}
	From here, it follows that  
	\begin{align*}
	\big|\lambda^{k-1}\partial_\lambda^k \mR_0(\lambda^{2m})(x,y)\big| \les \lambda^{k-1}|x-y|^{k+2m-n}\la \lambda |x-y|\ra^{\frac{n+1}{2}-2m}.
	\end{align*}
	When $\lambda |x-y|\leq 1$, we cannot use the terms in the bracket, but instead rearrange to see
	\begin{align*}
	\chi(\lambda|x-y|)\big|\lambda^{k-1}\partial_\lambda^k \mR_0(\lambda^{2m})(x,y)\big| \les \chi(\lambda|x-y|) (\lambda|x-y|)^{k-1}|x-y|^{2m+1-n} \les |x-y|^{2m+1-n}  .
	\end{align*}
	Here we used that $k-1\geq 0$.
	When $\lambda |x-y|\geq 1$, we have
	\begin{align*}
	\widetilde\chi(\lambda|x-y|)\big|\lambda^{k-1}\partial_\lambda^k \mR_0(\lambda^{2m})(x,y)\big| \les \widetilde \chi(\lambda|x-y|) (\lambda|x-y|)^{k-1-2m+\frac{n+1}{2}}|x-y|^{2m+1-n}  .
	\end{align*}
	Here we consider cases, either $k-1-2m+\frac{n+1}{2}<0$ hence the first term is bounded by one and we have the bound $|x-y|^{1+2m-n}$. On the other hand, if $k-1-2m+\frac{n+1}{2}\geq 0$ we bound by
	$$
	\widetilde \chi(\lambda|x-y|) \lambda^{k-1-2m+\frac{n+1}{2}}|x-y|^{k-(\frac{n-1}{2})}  .
	$$
	Since the exponent on $\lambda$ is non-negative, taking the supremum on $0<\lambda<1$ yields the bound of $|x-y|^{k-(\frac{n-1}{2})}$. 
\end{proof}

To control the low energy, we define the following terms.  First, we define an operator $T:L^2\to L^2$ with integral kernel $T(\cdot, \cdot)$ to be absolutely bounded if the operator with kernel $|T(\cdot,\cdot)|$ is also bounded on $L^2$.  Further, we define the operator
$$
T_0:=U+v\mR_0^+(0)v=M^+(0).
$$
Here $v=|V|^{\frac12}$ and $V=vw$, recall that $|w|=v$.
By the assumption that zero energy is regular, $T_0$ is invertible, see e.g. \cite{soffernew}.

The bounds in Lemma~\ref{lem:low resolv sup} imply that the operator $R_k$ with kernel
\be\label{R_k}
R_k(x,y):=v(x)v(y)\sup_{0<\lambda<1}|\lambda^{k-1} \partial_\lambda^k \mR_0(\lambda^{2m})(x,y)|
\ee
is bounded on $L^2(\R^2)$ for $1\leq k\leq \frac{n+1}2$ provided that $|V(x)|\les \la x \ra^{-\beta}$ for some $\beta>n+2$.  We note that when $n$ is large compared to $m$, we identify $|x-y|^{2m+1-n}$ as a multiple of the fractional integral operator $I_{2m+1}:L^{2,\sigma}\to L^{2,-\sigma}$, see Propositions~3.2 and 3.3 in \cite{GV} for example.  Using the decay of $v(x)v(y)$ suffices when identifying $\sigma=\sigma'=\frac{\beta}{2}$, to apply the Propositions in \cite{GV} and establish boundedness on $L^2$.

Note that by a Neumann series expansion and the invertibility of $T_0$ we have 
$$
[M^+(\lambda)]^{-1} =\sum_{k=0}^\infty (-1)^k T_0^{-1} (E(\lambda)T_0^{-1})^{k},
$$
where $E(\lambda)=v[\mR^+_0(\lambda^{2m})-\mR^+_0(0)]v$
for $0<\lambda <\lambda_0$. By \eqref{R_k} and the mean value theorem we have 
$$
E_0(x,y):=\sup_{0<\lambda<\lambda_0}|E(\lambda)(x,y)|
\les \lambda_0 R_1(x,y)
$$
is a bounded operator on $L^2$ with norm $\les \lambda_0$.  Therefore, 
$$
\Gamma_0(x,y):=\sup_{0<\lambda<\lambda_0}| [M^+(\lambda)]^{-1}(x,y)|
$$ 
is bounded on $L^2$ for sufficiently small $\lambda_0$.

Similarly, note that by the resolvent identity the operator
$\lambda^N \partial_\lambda^N[M^+(\lambda)]^{-1} $ is a linear combination of operators of the form 
$$
[M^+(\lambda)]^{-1} \prod_{j=1}^J\big[v\big(\lambda^{k_j}\partial_\lambda^{k_j}\mR_0^{+}(\lambda^{2m})\big)v[M^+(\lambda)]^{-1}\big],
$$
where $\sum k_j=N$ and each $k_j\geq 1$. Therefore using \eqref{R_k} we see that 
\be\label{eq:MGamma}
\Gamma_N(x,y):=\sup_{0<\lambda<\lambda_0}\lambda^N | \partial_\lambda^N[M^+(\lambda)]^{-1}(x,y)|
\ee
is bounded in $L^2$ for $N=0,1,\ldots,\frac{n+1}2$ provided that $\beta >n+2$.  Further, for $N\geq 1$ we may replace $\lambda^{N}$ with $\lambda^{N-1}$, and the operator remains bounded on $L^2$.   This bound suffices to prove Proposition~\ref{prop:low energy} for $n<4m$, odd. However, for odd $n>4m$ we need to modify the approach to account for  the fact that $|x-\cdot|^{2m-n}$ is no longer locally $L^2(\R^n)$.  We iterate the Born series further and utilize the following
\begin{align}\label{eq:Alambda}
A(\lambda, z_1,z_2) =  \big[ \big(\mR_0^+(\lambda^{2m})  V\big)^{\kappa}\mR_0^+(\lambda^{2m})\big](z_1,z_2).
\end{align}  
By repeated iterations of Lemma~\ref{lem:IMRN lems} using the representation of Lemma~\ref{prop:F}, each iteration of the resolvent  smooths out $2m$ power  of the singularity.  
Selecting $\kappa$ large enough ensures that $A$ is bounded.  That is, we have
\begin{lemma}\label{lem:A lems} Fix odd $n>4m$.  If   $\kappa\in \mathbb N$ is sufficiently large depending on $n,m$ and $|V(x)|\les \la x \ra^{-\frac{n+3}2-}$, then 
	\begin{align*}
	\sup_{0<\lambda <1}|\partial_\lambda^\ell 	A(\lambda, z_1,z_2)|&\les \la z_1 \ra  \la z_2\ra,
	\end{align*}
	for $0\leq \ell\leq \frac{n+1}{2}$.   
\end{lemma}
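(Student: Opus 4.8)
The plan is to reduce the claim to an iterated fractional–integral estimate built on the explicit kernel of Lemma~\ref{prop:F}. Setting $w_0:=z_1$ and $w_{\kappa+1}:=z_2$, the Leibniz rule in $\lambda$ gives
\[
\partial_\lambda^\ell A(\lambda,z_1,z_2)=\sum_{\ell_0+\cdots+\ell_\kappa=\ell}c_{\vec\ell}\int_{\R^{n\kappa}}\Big[\prod_{j=0}^{\kappa}\partial_\lambda^{\ell_j}\mR_0^+(\lambda^{2m})(w_j,w_{j+1})\Big]\Big[\prod_{j=1}^{\kappa}V(w_j)\Big]\,dw_1\cdots dw_\kappa ,
\]
so it suffices to bound each summand, with $\ell_0+\cdots+\ell_\kappa=\ell\le\frac{n+1}{2}$. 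From the representation $\mR_0^+(\lambda^{2m})(x,y)=\frac{e^{i\lambda|x-y|}}{|x-y|^{n-2m}}F(\lambda|x-y|)$ together with $|F^{(N)}(r)|\les\la r\ra^{\frac{n+1}{2}-2m-N}$, the product and chain rules yield, uniformly in $0<\lambda<1$,
\[
\big|\partial_\lambda^{j}\mR_0^+(\lambda^{2m})(x,y)\big|\les |x-y|^{2m-n+j}\,\la\lambda|x-y|\ra^{\frac{n+1}{2}-2m},\qquad j=0,1,2,\dots
\]
(this is the computation already carried out in the proof of Lemma~\ref{lem:low resolv sup}). Since here $\lambda<1$ and $\frac{n+1}{2}-2m\ge0$, this splits into a \emph{diagonal} piece $\les|x-y|^{2m-n+j}\indicator_{|x-y|\le1}$ and a \emph{far} piece $\les|x-y|^{j+\frac{1-n}{2}}\indicator_{|x-y|\ge1}$, whose exponent is $\le 1$ whenever $j\le\frac{n+1}{2}$.

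Next I would insert these bounds together with $|V(w_j)|\les\la w_j\ra^{-\frac{n+3}{2}-}$ and integrate out $w_1,\dots,w_\kappa$ one variable at a time using Lemma~\ref{lem:IMRN lems}. Each such integration convolves the kernel accumulated so far (carrying a diagonal singularity of some order $<n$) against the next resolvent kernel (diagonal order $n-2m$ at worst), while the weight $\la w_j\ra^{-\frac{n+3}{2}-}$ absorbs the growth produced by the two adjacent far pieces: together these grow at most like $\la w_j\ra^{\ell_{j-1}+\ell_j+1-n}$ at spatial infinity, and because $\ell_{j-1}+\ell_j\le\ell\le\frac{n+1}{2}$ this is integrable against $\la w_j\ra^{-\frac{n+3}{2}-}$ and the diagonal factors, so Lemma~\ref{lem:IMRN lems} applies at each step and improves the diagonal singularity by $2m$. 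After $\kappa$ steps the diagonal singularity in the $(z_1,z_2)$ variable has order $n-2m(\kappa+1)$ (and is only milder when some $\ell_j\ge1$), so taking $\kappa$ with $2m(\kappa+1)>n$ — any $\kappa>\frac{n}{2m}-1$ works, and so does every larger $\kappa$ — leaves a bounded, locally integrable kernel. What survive are the far–diagonal contributions of the two end resolvents $\mR_0^+(\lambda^{2m})(z_1,w_1)$ and $\mR_0^+(\lambda^{2m})(w_\kappa,z_2)$: using $\lambda<1$ these grow at most like $|z_1-w_1|^{\ell_0+\frac{1-n}{2}}$ and $|w_\kappa-z_2|^{\ell_\kappa+\frac{1-n}{2}}$, exponents $\le1$ since $\ell_0,\ell_\kappa\le\frac{n+1}{2}$; bounding $|z_1-w_1|\les\la z_1\ra\la w_1\ra$ pulls out the claimed factor $\la z_1\ra$ (and likewise $\la z_2\ra$), the $\la w_1\ra$, $\la w_\kappa\ra$ powers being absorbed into the weights handled above. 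Taking the supremum over $0<\lambda<1$ then gives the stated bound.

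The step I expect to be the main obstacle is the inductive bookkeeping of exponents through the $\kappa$–fold iteration: one must check at every stage that the convolution integral converges \emph{both} at the diagonal — which forces the accumulated singularity order to stay below $n$ until $\kappa$ is large enough, thereby pinning down how large $\kappa$ must be chosen in terms of $n$ and $m$ — \emph{and} at spatial infinity, which is precisely where the decay hypothesis $|V(x)|\les\la x\ra^{-\frac{n+3}{2}-}$ and the constraint $\sum_j\ell_j\le\frac{n+1}{2}$ enter, so that Lemma~\ref{lem:IMRN lems} is legitimately applicable at each of the $\kappa$ integrations.
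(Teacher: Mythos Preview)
Your approach is correct and essentially the same as the paper's: both use Lemma~\ref{prop:F} to bound the differentiated resolvent factors pointwise and then iterate via Lemma~\ref{lem:IMRN lems}, each convolution gaining $2m$ in the diagonal singularity while the $\la w_j\ra^{-\frac{n+3}{2}-}$ decay controls the far field, the endpoint resolvents producing the claimed $\la z_1\ra\la z_2\ra$. The paper streamlines the bookkeeping you flag by first writing the product as $e^{i\lambda\sum_j r_j}\prod_j F(\lambda r_j)\, r_j^{2m-n}$ and bounding $\big|\partial_\lambda^\ell(\cdots)\big|\les\big(\sum_j r_j^\ell\big)\prod_j\la r_j\ra^{\frac{n+1}{2}-2m}r_j^{2m-n}$, so the derivative-induced growth sits on a \emph{single} segment rather than being distributed across a multi-index $(\ell_0,\dots,\ell_\kappa)$; this reduces the iteration to convolving undifferentiated kernels against one modified factor and makes each inductive step a direct application of Lemma~\ref{lem:IMRN lems}.
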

We will prove this lemma at the end of this section.  We say an operator $K$ is admissible if its integral kernel $K(x,y)$ satisfies
$$
	\sup_{x\in \R^n} \int_{\R^n}|K(x,y)|\, dy+	\sup_{y\in \R^n} \int_{\R^n}|K(x,y)|\, dx<\infty.
$$ 
By the Schur test, an admissible operator is bounded on $L^p$ for all $1\leq p\leq \infty$.

By  iterating the Born series sufficiently many times it suffices to prove that the operator with kernel 
\begin{align*}
\int_0^\infty \lambda^{2m-1}\chi(\lambda)[\mR_0^+(\lambda^{2m})V A(\lambda)v M^{-1}(\lambda)vA(\lambda) V \mR_0^\mp(\lambda^{2m})](x,y)\, d\lambda.
\end{align*}
is bounded on $L^p$, $1<p<\infty$. Letting (recall that $|w|=v$)
$$\Gamma=w A(\lambda)v M^{-1}(\lambda)vA(\lambda) w,$$
and using Lemma~\ref{lem:A lems} and \eqref{eq:MGamma}  we see that $\Gamma$ 
  satisfies 
\be\label{eq:tildegamma}
\widetilde \Gamma (x,y):= \sup_{0<\lambda <\lambda_0} \sup_{0\leq k\leq  \frac{n+1}2} \big|\lambda^k \partial_\lambda^k \Gamma(\lambda)(x,y)\big| \les \la x\ra^{-\frac{n}2-}\la y\ra^{-\frac{n}2-},
\ee
provided that $\beta>n+2$.  Hence, Proposition~\ref{prop:low energy} is a consequence of the following bound.

\begin{lemma}\label{lem:low tail low d} Fix $n$ odd and let $\Gamma$ be a $\lambda $ dependent absolutely bounded operator. Let 
$$
\widetilde \Gamma (x,y):= \sup_{0<\lambda <\lambda_0} \sup_{0\leq k\leq  \frac{n+1}2} \big|\lambda^k \partial_\lambda^k \Gamma(\lambda)(x,y)\big|.
$$
For $2m<n<4m$ assume that $\widetilde \Gamma $ is bounded on $L^2$, and for $n>4m$  assume that $\widetilde\Gamma $ satisfies   \eqref{eq:tildegamma}. Then the operator with kernel 
	$$
	K(x,y)=\int_0^\infty \chi(\lambda) \lambda^{2m-1} [\mR_0^+(\lambda^{2m}) v \Gamma v \mR_0^-(\lambda^{2m})](x,y)   d\lambda 
	$$
	is bounded on $L^p$ for $1<p<\infty$ provided that $\beta>n$.
\end{lemma}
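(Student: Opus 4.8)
The plan is to substitute the closed-form representation of Lemma~\ref{prop:F} for both resolvents and then, by integration by parts in the spectral variable $\lambda$, reduce to a pointwise bound on the kernel $K(x,y)$. Writing $r_1=|x-z_1|$ and $r_2=|z_2-y|$, we have
$$
K(x,y)=\int_{\R^n}\int_{\R^n}v(z_1)v(z_2)\Big[\int_0^\infty\chi(\lambda)\lambda^{2m-1}\frac{e^{i\lambda(r_1-r_2)}}{r_1^{n-2m}r_2^{n-2m}}F(\lambda r_1)\,\Gamma(\lambda)(z_1,z_2)\,F(\lambda r_2)\,d\lambda\Big]\,dz_1\,dz_2,
$$
so the inner integral has oscillatory phase $e^{i\lambda(r_1-r_2)}$. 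I would integrate by parts $N$ times in $\lambda$, distributing derivatives among $\chi(\lambda)\lambda^{2m-1}$, the factors $F(\lambda r_j)$, and $\Gamma(\lambda)$. This is legitimate for $1\le N\le\frac{n+1}{2}$ (one takes $N$ slightly smaller, $N=\frac{n-1}{2}$, in the borderline case $n=4m-1$ to avoid a logarithmic divergence at $\lambda=0$): the derivatives of $F$ satisfy $|F^{(j)}(\lambda r)|\les\la\lambda r\ra^{\frac{n+1}2-2m-j}$ by Lemma~\ref{prop:F}, the derivatives of $\chi\lambda^{2m-1}$ are harmless on $[0,2\lambda_0]$, and $|\partial_\lambda^d\Gamma(\lambda)(z_1,z_2)|\le\lambda^{-d}\widetilde\Gamma(z_1,z_2)$ directly from the definition of $\widetilde\Gamma$ in the hypothesis. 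Combining the $N=0$ (no oscillation) estimate with the $N$-fold estimate through a minimum, and absorbing the few extra powers of $r_j$ appearing in lower-order terms using the $\la\lambda r_j\ra$-decay on the set $\lambda r_j\gtrsim1$, yields a bound of the shape
$$
\Big|\int_0^\infty\chi(\lambda)\lambda^{2m-1}\frac{e^{i\lambda(r_1-r_2)}}{r_1^{n-2m}r_2^{n-2m}}F(\lambda r_1)\,\Gamma(\lambda)\,F(\lambda r_2)\,d\lambda\Big|\les\frac{\widetilde\Gamma(z_1,z_2)}{r_1^{n-2m}r_2^{n-2m}}\,\min\big(1,|r_1-r_2|^{-N}\big),\qquad N\ge2.
$$
These are precisely the one-dimensional oscillatory integral estimates collected in Section~\ref{sec:int ests}.

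With this pointwise bound, the remaining task is to perform the $z_1,z_2$ integrations and to recognize an $L^p$-bounded operator. I would split according to whether $r_1,r_2\les1$ or $\gtrsim1$. When $r_1\gtrsim1$ or $r_2\gtrsim1$, the decay $|r_1-r_2|^{-N}$ together with $v(z)\les\la z\ra^{-\beta/2}$ (this is where $\beta>n$ enters) and the integral estimates of Section~\ref{sec:int ests} produce a kernel obeying the admissibility bound $\sup_x\int|K(x,y)|\,dy+\sup_y\int|K(x,y)|\,dx<\infty$, hence bounded on $L^p$ for all $1\le p\le\infty$ by the Schur test. The delicate region is $r_1,r_2\les1$: there $|x-z_1|^{2m-n}$ is a genuine local fractional-integration kernel, so no $L^1$ or $L^\infty$ bound can be expected. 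For $2m<n<4m$ one uses that $\mathbf 1_{\{|x-z_1|\les1\}}|x-z_1|^{2m-n}\in L^2$, so the localized piece factors through $\widetilde\Gamma$ acting on $L^2$; for $n>4m$, where $|x-z_1|^{2m-n}\notin L^2_{\mathrm{loc}}$, this is exactly why the Born series was iterated further to produce $A(\lambda,z_1,z_2)$, and one instead invokes the pointwise bound \eqref{eq:tildegamma} furnished by Lemma~\ref{lem:A lems}. In either case, after the spatial integrations the near-diagonal part of $K(x,y)$ is dominated by (a composition of) the Hardy--Littlewood maximal operator with Riesz-transform-type Calder\'on--Zygmund operators, which are bounded on $L^p$ only for $1<p<\infty$; this is the source of the loss of the endpoints $p=1,\infty$ here (recovered in Section~\ref{sec:low2} under the stronger decay).

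The main obstacle is this last step: spending the oscillatory gain $|r_1-r_2|^{-N}$ efficiently enough to tame the non-$L^p$-bounded fractional-integral kernels $|x-z_j|^{2m-n}$ while only assuming $\beta>n$, and, in the range $n>4m$, arranging the extra resolvent iterations so that the hypotheses of Lemma~\ref{lem:A lems} apply. By contrast, the integration-by-parts step and the treatment of the admissible (far) region are comparatively routine once Lemma~\ref{prop:F}, Lemma~\ref{lem:low resolv sup}, and the estimates of Section~\ref{sec:int ests} are in hand.
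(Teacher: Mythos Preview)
Your proposal has a genuine gap. The pointwise bound you claim after integration by parts,
\[
\Big|\int_0^\infty\chi(\lambda)\lambda^{2m-1}\frac{e^{i\lambda(r_1-r_2)}}{r_1^{n-2m}r_2^{n-2m}}F(\lambda r_1)\,\Gamma(\lambda)\,F(\lambda r_2)\,d\lambda\Big|\les\frac{\widetilde\Gamma(z_1,z_2)}{r_1^{n-2m}r_2^{n-2m}}\,\min\big(1,|r_1-r_2|^{-N}\big),
\]
is too crude: it throws away the factors $\la\lambda r_j\ra^{\frac{n+1}{2}-2m}$ coming from $F$. For $2m<n<4m$ these factors give genuine \emph{extra decay} in $\la\lambda r_j\ra$, and they are indispensable in the region $r_2\gg\la r_1\ra$. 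Indeed, with your bound and $N=\frac{n+1}{2}$ one has $|r_1-r_2|\approx r_2$ there, so the kernel is only $\les r_1^{2m-n}r_2^{2m-n-\frac{n+1}{2}}$; since $2m-n-\frac{n+1}{2}>-n$ whenever $n<4m-1$, the $y$-integral diverges and no Schur/admissibility argument can close. The paper keeps the $\lambda$-integral together with these $\la\lambda r_j\ra$ factors (see \eqref{kbound02}), combines $\la\lambda(r_1-r_2)\ra^{-\frac{n+1}{2}}\la\lambda r_2\ra^{-(2m-\frac{n+1}{2})}\approx\la\lambda r_2\ra^{-2m}$, and only then evaluates the $\lambda$-integral to get the borderline $r_2^{-n}\log r_2$ kernel, which is handled for $1<p<\infty$ by a direct H\"older splitting $r_2^{-n}=r_2^{-n/p}r_2^{-n/p'}$. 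For $n>4m$ the same omitted factors are \emph{growing}, so your displayed bound is simply false there as well.

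Relatedly, you have the geometry of the endpoint loss backwards. The near region $r_1,r_2\les 1$ (the paper's $K_1$) is in fact \emph{admissible}: the local singularity $|x-z_1|^{2m-n}$ is integrable in $x$ over $\{|x-z_1|<1\}$, and for $2m<n<4m$ one has $|y-z_2|^{2m-n}\in L^2_{\mathrm{loc}}$, so $K_1$ is bounded on all $L^p$, $1\le p\le\infty$. The loss of $p=1,\infty$ comes entirely from the far regions $K_3,K_4$ (one of $r_1,r_2$ much larger than the other), precisely because of the $\log r_2$ in the $r_2^{-n}\log r_2$ kernel above. No Calder\'on--Zygmund or maximal-function machinery enters; the argument is a direct H\"older estimate in the $z$-variables.
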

\begin{proof} Using the representation in Lemma~\ref{prop:F} with $r_1=|x-z_1|$ and $r_2:=|z_2-y|$ we have  
	\be \label{Kdefini}
	K(x,y)=\int_{\R^{2n}}  \frac{ v(z_1) v(z_2) }{r_1^{n-2m} r_2^{n-2m} } \int_0^\infty e^{i\lambda (r_1 -r_2 )} \chi(\lambda) \lambda^{2m-1} \Gamma(\lambda)(z_1,z_2) F(\lambda r_1)F(\lambda r_2)  d\lambda dz_1 dz_2 .
	\ee
	Using the bounds in Lemma~\ref{prop:F}, \eqref{eq:tildegamma}, and the assumption $\beta>n$, we bound the $\lambda$-integral above by 
	\begin{align}\label{kbound0}
	 \widetilde\Gamma (z_1,z_2)    \int_0^1       \frac{\lambda^{2m-1}   }{ \la \lambda r_1\ra^{2m-\frac{n+1}2} \la \lambda r_2\ra^{2m-\frac{n+1}2} } d\lambda .
	\end{align}
	Also note that by integrating by parts $N\leq \frac{n+1}2$ times   in $\lambda$ when $\lambda|r_1-r_2|>1$ and using \eqref{kbound0} when
	$\lambda|r_1-r_2|<1$, and recalling   the bounds for the derivatives of $F$, we obtain 
	\begin{align} 
	\label{kbound02}
	|K(x,y)|\les \int_{\R^{2n}} \frac{v(z_1)  \widetilde\Gamma (z_1,z_2) v(z_2)}{  r_1^{n-2m} r_2^{n-2m}} \int_0^1 \frac{ \lambda^{2m-1}   } { \la \lambda (r_1-r_2)\ra^N     \la \lambda r_1\ra^{2m-\frac{n+1}2} \la \lambda r_2\ra^{2m-\frac{n+1}2} } d\lambda  dz_1 dz_2. 
	\end{align}
Note that there are no boundary terms here since we include  the cutoff $\widetilde \chi	(\lambda (r_1-r_2))$ in the integration by parts argument above. Also note that we can choose $N$ depending on $z_1, z_2$.  We write
	$$
	K(x,y) =: \sum_{j=1}^4 K_{j}(x,y),
	$$
	where the integrand in  
	$K_1$ is  restricted  to the set $r_1,r_2\les 1$, in $K_2$ to the set $r_1 \approx r_2\gg 1 $,
	in $K_3$ to the set $r_2\gg \la r_1\ra $,  in $K_4$ to the set $r_1 \gg \la r_2\ra$.

	Note that $K_1$ is admissible using \eqref{kbound02} with $N=0$: For $n<4m$ we have 
	\begin{multline*}
		\int|K_1(x,y)| dx\les \int_{\R^{2n}} \int_{ r_1 <1} \frac{ v(z_1)  \widetilde\Gamma  (z_1,z_2)     v(z_2) }{r_1^{n-2m} r_2^{n-2m}    }   dx dz_1 dz_2\\  
		\les \|v(\cdot)|y-\cdot|^{2m-n}\|_{L^2} \|\widetilde\Gamma \|_{L^2\to L^2} \||x-\cdot|^{n-2m}v\|_{L^2}\les 1,
	\end{multline*}
	provided that $\beta>n $.  For $n>4m$, we instead have 
	\begin{multline*}
	\int|K_1(x,y)| dx\les \int_{\R^{2n}} \int_{ r_1 <1} \frac{   \la z_1\ra^{-n- }    \la z_2\ra^{-n- }}{r_1^{n-2m} r_2^{n-2m}    }   dx dz_1 dz_2  \\
	\les \big\| |\cdot|^{2m-n}\big\|_{L^1(B(0,1))} \big\|\la \cdot\ra^{-n-} |y-\cdot|^{2m-n}\big\|_{L^1}   \big\|\la \cdot\ra^{-n-}\big\|_{L^1} \les 1,
	\end{multline*}
	uniformly in $y$. The $y$-integrals can be estimated similarly.

	Similarly $K_2$ is admissible using \eqref{kbound02} with $N=2$: For $n<4m$ we have 
	\begin{multline*}
	\int|K_2(x,y)| dx\les  \int_{\R^{2n}} \int_{ r_1 \approx r_2  \gg 1 } \frac{ v(z_1)  \widetilde\Gamma (z_1,z_2)     v(z_2) }{r_1^{2n-4m}    }   \int_0^1\frac{\lambda^{2m-1}   }{ \la \lambda (r_1-r_2)\ra^2 \la \lambda r_1\ra^{4m- n-1}  }  d\lambda dx  dz_1 dz_2 
	\\ \les  \int_{\R^{2n}} v(z_1)   \widetilde\Gamma(z_1,z_2)     v(z_2)   \int_{ r_1 \approx r_2 \gg1  }  \int_0^1\frac{\lambda^{2m-1}  r_1^{4m-n-1} }{ \la \lambda (r_1-r_2)\ra^2  \la \lambda r_1\ra^{4m- n-1}  }  d\lambda d r_1  dz_1 dz_2 \\
	=\int_{\R^{2n}} v(z_1)   \widetilde\Gamma(z_1,z_2)  v(z_2)    \int_0^1\int_{ \eta \approx \lambda r_2  \gg \lambda } \frac{\lambda^{n-2m-1}  \eta^{4m-n-1} }{ \la \eta - \lambda  r_2 \ra^2  \la \eta\ra^{4m- n-1}  }  d \eta  d\lambda dz_1 dz_2  \les 1,  
	\end{multline*}
	provided that $\beta>n $. When $n>4m$ we bound the last integral by 
\begin{multline*}
 \int_{\R^{2n}}  \la z_1\ra^{-n- }    \la z_2\ra^{-n- }   \int_0^1\int_{ \eta \approx \lambda r_2  \gg \lambda } \frac{\lambda^{n-2m-1}  \eta^{4m-n-1} }{ \la \eta - \lambda  r_2 \ra^2  \la \eta\ra^{4m- n-1}  }  d \eta  d\lambda dz_1 dz_2\\
 \les \int_{\R^{2n}}  \la z_1\ra^{-n- }    \la z_2\ra^{-n-}   \int_0^1 \Big[\int_1^\infty  \frac{\lambda^{n-2m-1}  d \eta }{ \la \eta - \lambda  r_2 \ra^2   }  +  \int_{1> \eta \approx \lambda r_2  \gg \lambda } \frac{\lambda^{ 2m-2 }  d \eta }{ \la \eta - \lambda  r_2 \ra^2    }    \Big] d\lambda dz_1 dz_2 \les 1.
	\end{multline*}
The $y$-integrals can be estimated similarly.

	We will prove that $K_3$ and $K_4$ are bounded in $L^p$ for $1<p<\infty$. By symmetry we will only consider $K_3$. By using \eqref{kbound02} with $N=\frac{n+1}2$ we have the bound
	\be\label{eq:k3cases}	
 |K_3(x,y)|\les \int_{\R^{2n}} \frac{ v(z_1) \widetilde\Gamma(z_1,z_2)  v(z_2)  }{  r_1^{n-2m} r_2^{n-2m}} \int_0^1 \frac{ \lambda^{2m-1}  \la \lambda r_1\ra^{\frac{n+1}2 -2m}}  {      \la \lambda r_2\ra^{2m} } d\lambda  dz_1 dz_2. 
\ee
When $n<4m$, we bound this by
	\begin{multline*}
	|K_3(x,y)|\les \int_{\R^{2n}} \frac{  v(z_1) \widetilde\Gamma(z_1,z_2)  v(z_2) }{r_1^{n-2m}  r_2^{n-2m}      } 
	\int_0^1 \frac{ \lambda^{2m-1}}{ \la \lambda r_2\ra^{2m }  } d\lambda dz_1 dz_2
	\\ \les \int_{\R^{2n}} \frac{ v(z_1)  \widetilde\Gamma(z_1,z_2)  v(z_2) \log(  r_2 )}{ r_1^{n-2m} r_2^{n}      } 
	dz_1 dz_2
	\les  \int_{\R^{2n}} \frac{ v(z_1) \widetilde\Gamma(z_1,z_2)  v(z_2) }{ r_1^{n-2m} \la r_1\ra^{\frac{n}p-}\la r_2\ra^{\frac{n}{p^\prime}+}      } 
	dz_1 dz_2.
	\end{multline*}
	By H\"older we have 
	$$
	\Big\|\int K_3(x,y)  f(y)  dy \Big\|_{L^p} \les \|f\|_{L^p}  \Big\| \int_{\R^{2n}} \frac{  v(z_1) \widetilde\Gamma(z_1,z_2)   v(z_2) }{ |x-z_1|^{n-2m} \la x-z_1\ra^{\frac{n}p-}    } 
	dz_1 dz_2\Big\|_{L^p}.
	$$
	When $|x-z_1|>1$  the bound is easy by Minkowski integral inequality. Similarly, when  $|x-z_1|<1$ and $p<\frac{n}{n-2m}$. When $|x-z_1|<1$ and $p\geq \frac{n}{n-2m}$, we estimate the integral by 
	\begin{multline*}
	\la x\ra^{-\beta/2} \int_{\R^{2n}} \frac{  \widetilde\Gamma(z_1,z_2)  \chi_{|x-z_1|<1}  v(z_2) }{ |x-z_1|^{n-2m}  } dz_1 dz_2 \\ \les  \| \widetilde\Gamma\|_{L^2\to L^2} \|v\|_{L^2} \|  |z_1|^{2m-n}\|_{L^2_{B(0,1)}}    \la x\ra^{-\beta/2} \les \la x\ra^{-\beta/2}  \in L^p, 
	\end{multline*}
	provided that $\beta/2>\frac{n}p$, which holds if   $\beta>n$.

	For $n>4m$, we bound the $\lambda$-integral in \eqref{eq:k3cases} by   
$$   \int_0^1 \frac{ \lambda^{2m-1}  }  {      \la \lambda r_2\ra^{2m} } d\lambda +   \int_0^1 \frac{ \lambda^{\frac{n-1}2-2m}  r_1^{\frac{n+1}2 -2m}  }  {       r_2^{2m} } d\lambda  \les \frac{\log(r_2)}{r_2^{2m}}+\frac{  r_1^{\frac{n+1}2 -2m}  }  {       r_2^{2m} }.
$$
Therefore,
$$|K_3(x,y)|\les  \int_{\R^{2n}} \la z_1\ra^{-n-  }    \la z_2\ra^{-n- } \Big[\frac{ \log(r_2) }{  r_1^{n-2m} r_2^{n}} + \frac{1}{  r_1^{\frac{n-1}2} r_2^{n}}\Big]dz_1dz_2.
$$
This can be bounded as above considering the cases $|x-z_1|>1$ and $|x-z_1|<1$ separately.
\end{proof}

We now complete the proof of Proposition~\ref{prop:low energy} by proving Lemma~\ref{lem:A lems}:   
\begin{proof}[Proof of Lemma~\ref{lem:A lems}]
 Using Lemma~\ref{prop:F}, we note that when $\lambda <1$ we have (with $u_0=z_1$ and $u_{\kappa+1}=z_2$)
	\begin{multline}
	\bigg|\partial_{\lambda}^\ell \bigg(\prod_{j=1}^\kappa  \mR_0(\lambda^{2m})(u_{j-1},u_j)V(u_j) \mR_0(\lambda^{2m})(u_{\kappa},z_2)\bigg) \bigg|\\
	=\bigg|\partial_{\lambda}^\ell \bigg(e^{i\lambda \sum_{j=1}^{\kappa+1} |u_{j-1}-u_{j}|}\prod_{j=1}^\kappa \frac{F(\lambda|u_{j-1}-u_j|)  V(u_j)}{|u_{j-1}-u_j|^{n-2m}} \frac{F(\lambda|u_{\kappa}-u_{\kappa+1}|)}{|u_{\kappa}-u_{\kappa+1}|^{n-2m}} \bigg)\bigg|\\
	\les (\sum_{j=1}^{\kappa+1}|u_{j-1}-u_j|^\ell) \prod_{j=1}^\kappa \frac{\la u_{j-1}-u_j\ra^{\frac{n+1}{2}-2m}  |V(u_j)|}{|u_{j-1}-u_j|^{n-2m}} \frac{\la u_{\kappa}-u_{\kappa+1}\ra^{\frac{n+1}{2}-2m}}{|u_{\kappa}-u_{\kappa+1}|^{n-2m}}.\label{eqn:iterated l derivs}
	\end{multline}
We only consider the case when $j=\kappa+1$ in the first sum above; the other cases boils down to this case. We need to bound
$$
\int \prod_{j=1}^\kappa \frac{\la u_{j-1}-u_j\ra^{\frac{n+1}{2}-2m}  |V(u_j)|}{|u_{j-1}-u_j|^{n-2m}} \frac{\la u_{\kappa}-u_{\kappa+1}\ra^{\frac{n+1}{2}-2m}}{|u_{\kappa}-u_{\kappa+1}|^{n-2m-\ell}} du_1\dots du_{\kappa}.
$$
Note that for $ a=1,...,\lfloor n/2m\rfloor -1$, we have  
$$
\int  \frac{\la u_{0}-u \ra^{\frac{n+1}{2}-2ma}  }{|u_{0}-u|^{n-2ma}}  \la u\ra^{-\frac{n+1}2-} \frac{\la u -u_1 \ra^{\frac{n+1}{2}-2m  } }{|u -u_1|^{n-2m}} du \les \frac{\la u_{0}-u_1 \ra^{\frac{n+1}{2}-2m(a+1)}  }{|u_{0}-u_1|^{n-2m(a+1)}},  
$$
namely the power of the singularity decreases by $2m$ but the decay rate does not change. 
To see this inequality consider the cases $|u_{0}-u|<1$, $|u_{0}-u|>1$ separately and same for $|u-u_1|$. Also note that if $a\geq \lfloor n/2m\rfloor$ , then the bound is $ \la u_{0}-u_1 \ra^{-\frac{n-1}2}$.

Using this bound in $u_1,\dots, u_{\kappa-1}$ integrals, and assuming $\kappa$ is large, we obtain the bound 
$$
\int  \la u_{0}-u_\kappa \ra^{-\frac{n-1}{2}   }  \la u_\kappa\ra^{-\frac{n+3}2-} \frac{\la u_{\kappa}-u_{\kappa+1}\ra^{\frac{n+1}{2}-2m}}{|u_{\kappa}-u_{\kappa+1}|^{n-2m-\ell}}   du_{\kappa}.
$$
This is $\les 1$ if $\ell=0,1,\dots,\frac{n-1}2$. If $\ell=\frac{n+1}2$, then the bound is $\la u_{\kappa+1}\ra$.

If $j\neq 1,\kappa+1$, we start integrating from the farther end to the $j$th term and obtain the bound $\les 1$. 
\end{proof}

\section{Low Energy: Endpoint estimates in odd dimensions}\label{sec:low2}

In this section we prove that the low energy portion of the wave operators in odd dimensions is bounded at the endpoint values of $p=1,\infty$.  The proof relies on the explicit closed form representation of the odd dimensional resolvents.  Namely, we show
\begin{prop}\label{prop:improved low energy}
	
	Let $n>2m$ be odd.  If $|V(x)|\les \la x\ra^{-\beta}$ for some $\beta>n+4$, then the operator defined by
	$$
	-\frac{m}{\pi i} \int_0^\infty \chi(\lambda) \lambda^{2m-1}\mR_0^+(\lambda^{2m})vM^+(\lambda)^{-1}v[\mR_0^+-\mR_0^-](\lambda^{2m})\, d\lambda
	$$
	extends to a bounded operator on $L^p(\R^n)$ for all $1\leq p\leq \infty$.
\end{prop}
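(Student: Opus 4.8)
The plan is to re-run the low-energy analysis of Section~\ref{sec:low}, using the two extra powers of decay in the hypothesis $\beta>n+4$ at two separate points. First, with $\beta>n+4$ the operator $R_k$ of \eqref{R_k} is bounded on $L^2$ for every $1\leq k\leq\frac{n+3}{2}$: the only new case is $k=\frac{n+3}{2}$, where Lemma~\ref{lem:low resolv sup} produces the kernel $v(x)v(y)|x-y|^{2}$, which is bounded on $L^2$ because $\la x\ra^{2}v(x)\in L^2$. Consequently the bounds \eqref{eq:MGamma}--\eqref{eq:tildegamma} on $M^{+}(\lambda)^{-1}$, and likewise Lemma~\ref{lem:A lems}, continue to hold with $\frac{n+1}{2}$ replaced by $\frac{n+3}{2}$ (at the cost of a larger but fixed power of $\la z_1\ra\la z_2\ra$, still absorbed by the decay of $V$). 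After the Born iteration of Section~\ref{sec:low}, it again suffices to prove that the operator with kernel
$$
K(x,y)=\int_0^\infty \chi(\lambda)\,\lambda^{2m-1}\,\big[\mR_0^+(\lambda^{2m})\,v\,\Gamma\, v\,\mR_0^{\mp}(\lambda^{2m})\big](x,y)\,d\lambda
$$
is admissible, where now $\widetilde\Gamma$ controls $\lambda^k\partial_\lambda^k\Gamma$ for $0\leq k\leq\frac{n+3}{2}$.

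The term with $\mR_0^+$ on the right is easier, since its phase $e^{i\lambda(r_1+r_2)}$ is non-degenerate for $\lambda>0$; we therefore concentrate on the $\mR_0^-$ term. Splitting $K=K_1+K_2+K_3+K_4$ with $r_1=|x-z_1|$, $r_2=|z_2-y|$ exactly as in the proof of Lemma~\ref{lem:low tail low d}, the near-diagonal pieces $K_1$ (on $r_1,r_2\les1$) and $K_2$ (on $r_1\approx r_2\gg1$) are already admissible by the estimates obtained there. For the off-diagonal pieces, the extra integration by parts in $\lambda$ --- now permitted up to order $\frac{n+3}{2}$ --- removes the logarithm present in \eqref{eq:k3cases} and gives, for $K_3$ (on $r_2\gg\la r_1\ra$),
$$
|K_3(x,y)|\les \int_{\R^{2n}}\frac{v(z_1)\,\widetilde\Gamma(z_1,z_2)\,v(z_2)}{r_1^{n-2m}\,r_2^{n}}\,dz_1\,dz_2,
$$
and symmetrically for $K_4$ (with the appropriate pointwise decay replacing $v\widetilde\Gamma v$ when $n>4m$). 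By the argument of Lemma~\ref{lem:low tail low d} this already yields that $K_3$ is bounded on $L^1$ and $K_4$ on $L^\infty$, since in each case the corresponding Schur integral --- $\int dx$ for $K_3$, $\int dy$ for $K_4$ --- converges uniformly.

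What remains, and what genuinely requires the explicit closed form of Lemma~\ref{prop:F}, is the boundedness of $K_3$ on $L^\infty$ and, by the symmetric argument, of $K_4$ on $L^1$. Here the pointwise kernel bound above is useless, since $\int_{\R^n}r_2^{-n}\,dy=+\infty$ over the region $r_2\gg\la r_1\ra$: one must not pass to absolute values in the $y$-integral. The plan is to substitute $\mR_0^-(\lambda^{2m})(z_2,y)=\frac{e^{-i\lambda|z_2-y|}}{|z_2-y|^{n-2m}}\overline{F(\lambda|z_2-y|)}$, pass to polar coordinates $y=z_2+\rho\theta$ in $\int K_3(x,y)f(y)\,dy$, and integrate by parts in the radial variable $\rho$ --- using $|F^{(N)}(r)|\les\la r\ra^{\frac{n+1}{2}-2m-N}$ together with the $\lambda$-regularity of $\Gamma$ from the first step --- so that, after performing the $y$-integration, the surviving kernel (acting on $f$ in the $x$-variable) decays fast enough in the auxiliary variables $(z_1,z_2,\rho,\theta)$ to be admissible, whence boundedness on all $L^p$, $1\leq p\leq\infty$. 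The decomposition \eqref{eqn:Resolv} enters only to isolate the principal term: the roots of unity $\omega_\ell$ with $\ell\geq1$ carry the exponentially decaying factor $e^{i(\omega_\ell^{1/2}-1)\lambda r}$ and contribute harmless terms.

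I expect this last step to be the main obstacle: organizing the oscillatory $y$-integral so that, after the $\rho$-integration by parts and the $\lambda$-integration, every surviving factor is absolutely integrable. It is precisely here that $\beta>n+4$ (rather than the $\beta>n+2$ of Proposition~\ref{prop:low energy}) is used --- to absorb the two slowly-decaying factors $r_1^{2m-n}$ and $\rho^{2m-n}$, which decay only mildly when $n$ is large compared with $m$; in the open range $1<p<\infty$ one could instead distribute a logarithmic loss between the $x$- and $y$-variables via H\"older's inequality, as in Lemma~\ref{lem:low tail low d}, and no such care is needed.
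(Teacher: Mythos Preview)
Your diagnosis of the difficulty is correct: after $\tfrac{n+3}{2}$ integrations by parts in $\lambda$ one obtains the pointwise bound $|K_3(x,y)|\les \int \tfrac{v\widetilde\Gamma v}{r_1^{n-2m}r_2^{n}}\,dz_1dz_2$, which gives $K_3$ bounded on $L^1$ (and symmetrically $K_4$ on $L^\infty$) but fails at the other endpoint because $\int_{r_2\gg 1}r_2^{-n}\,dy$ diverges logarithmically. However, your proposed remedy --- passing to polar coordinates $y=z_2+\rho\theta$ and integrating by parts in $\rho$ --- does not work: the amplitude contains $f(z_2+\rho\theta)$, and integration by parts in $\rho$ would land derivatives on $f$, which is only assumed to lie in $L^\infty$. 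There is no oscillatory integral in the spatial variable to exploit once one is trying to prove admissibility of the kernel itself.

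The paper's argument stays entirely in the $\lambda$-variable and proceeds quite differently. The crucial new ingredient is Lemma~\ref{lem:R0cancel}, which exploits the cancellation in the \emph{difference} $[\mR_0^+-\mR_0^-](\lambda^{2m})(z_2,y)=\lambda^{n-2m}\widetilde F(\lambda r_2)$ with $\widetilde F$ entire and $|\widetilde F^{(j)}(r)|\les\la r\ra^{\frac{1-n}{2}-j}$: the spatial singularity at $r_2=0$ disappears and one gains $\lambda^{n-2m}$ vanishing at $\lambda=0$. Your treatment of the $\pm$ pieces separately discards this cancellation; in fact for $K_3$ the paper needs a further cancellation between the boundary terms coming from $e^{i\lambda(r_1+r_2)}$ and $e^{i\lambda(r_1-r_2)}$ (the $\ell=j+1$ term in the proof of Lemma~\ref{lem:low tail low d k3k4}) to avoid the logarithmic loss. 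The integration by parts is then carried out term-by-term in the polynomial $P_{\frac{n-3}{2}}$, with a variable number of integrations ($j+3$ for the degree-$j$ term), and requires the stronger control $\sup_\lambda\big(|\Gamma|+|\partial_\lambda\Gamma|+\sup_{2\leq k\leq\frac{n+3}{2}}|\lambda^{k-2}\partial_\lambda^k\Gamma|\big)$ rather than just $\lambda^k\partial_\lambda^k\Gamma$; it is this sharper requirement, not merely the extension of \eqref{R_k} to $k=\frac{n+3}{2}$, that forces $\beta>n+4$.
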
 
Unlike Proposition~\ref{prop:low energy}, this proposition relies on a detailed analysis of the cancellation in $\mR_0^+-\mR_0^-$.  \\

\noindent {\bf Remark:} (Correction 08/11/2022)\\
	{\it The statement of Lemma~\ref{lem:R0cancel} should be:\\
	Let $n>2m$ be odd. We have 
	$$
	[\mR_0^+(\lambda^{2m})-\mR_0^-(\lambda^{2m})](y,u)= \lambda^{n-2m}  [e^{i\lambda |y-u|} \widetilde F_+(\lambda |y-u|)+e^{-i\lambda |y-u|} \widetilde F_-(\lambda |y-u|)],
	$$
	where $\widetilde F_\pm$ are functions satisfying 
	$$
	|\partial_r^j\widetilde F_\pm(r)|\les \la r\ra^{\frac{1-n}2-j},\,\,\,r\in \R.
	$$
This representation follows from the proof below.  It results in a slight change to the proof of Lemma~\ref{lem:low tail low d k3k4}.  More explicitly in the analysis of the operators $K_{4,j}$ which make up $K_4$ in \eqref{K4newdef}, the phase is $\lambda(\omega_{\ell}^{1/2} r_1\pm r_2)$ instead of   $\lambda \omega_\ell^{1/2}  r_1$.  The rest of the argument follows through since in this regime, $|\omega_{\ell}^{1/2}r_1\pm r_2|\approx r_1$.}\\

We start with the following 
\begin{lemma}\label{lem:R0cancel}
Let $n>2m$ be odd. We have 
$$
[\mR_0^+(\lambda^{2m})-\mR_0^-(\lambda^{2m})](y,u)= \lambda^{n-2m}   \widetilde F(\lambda |y-u|),
$$
where $\widetilde F$ is an entire function satisfying 
$$
|\partial_r^j\widetilde F(r)|\les \la r\ra^{\frac{1-n}2-j},\,\,\,r\in \R.
$$

\end{lemma}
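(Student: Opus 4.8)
The plan is to reduce the $m$-th order resolvent difference to the second order one by means of the splitting identity \eqref{eqn:Resolv}, for which the assertion is classical.

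The first, and essentially only nontrivial, step is the observation that, in the difference $\mR_0^+-\mR_0^-$, every summand of \eqref{eqn:Resolv} with $\ell\geq1$ drops out. Since $(-\Delta)^m$ is a real operator we have $\overline{\mR_0(z)(y,u)}=\mR_0(\bar z)(y,u)$ for $z\notin[0,\infty)$, hence $\mR_0^-(\lambda^{2m})(y,u)=\overline{\mR_0^+(\lambda^{2m})(y,u)}$ for $\lambda>0$. In the boundary value $\arg\lambda\to 0^+$ of \eqref{eqn:Resolv}, only the $\ell=0$ term limits onto the spectrum $[0,\infty)$ of $-\Delta$ and produces $R_0^+(\lambda^2)$; for $\ell=1,\dots,m-1$ the point $\omega_\ell\lambda^2$ stays off $[0,\infty)$ and $R_0(\omega_\ell\lambda^2)$ is the unambiguous resolvent there. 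Thus
\[
\mR_0^+(\lambda^{2m})(y,u)=\frac1{m\lambda^{2m-2}}\Big(R_0^+(\lambda^2)(y,u)+\Sigma\Big),\qquad
\Sigma:=\sum_{\ell=1}^{m-1}\omega_\ell R_0(\omega_\ell\lambda^2)(y,u).
\]
From $\overline{\omega_\ell}=\omega_{m-\ell}$ and $\overline{\omega_\ell\lambda^2}=\omega_{m-\ell}\lambda^2$ (for real $\lambda$) we get $\overline{\omega_\ell R_0(\omega_\ell\lambda^2)(y,u)}=\omega_{m-\ell}R_0(\omega_{m-\ell}\lambda^2)(y,u)$, and re-indexing $\ell\mapsto m-\ell$ shows $\overline\Sigma=\Sigma$, i.e.\ $\Sigma$ is real. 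Conjugating the displayed identity and subtracting, the $\Sigma$ terms cancel, leaving
\[
[\mR_0^+(\lambda^{2m})-\mR_0^-(\lambda^{2m})](y,u)=\frac1{m\lambda^{2m-2}}\,[R_0^+(\lambda^2)-R_0^-(\lambda^2)](y,u).
\]

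It remains to recall the structure of the second order spectral measure. For odd $n$, using \eqref{eqn:R0 explicit0} and $\mR_0^-=\overline{\mR_0^+}$, one has (with $r=|y-u|$)
\[
[R_0^+(\lambda^2)-R_0^-(\lambda^2)](y,u)=\frac{2i}{r^{\,n-2}}\,\Im\big[e^{i\lambda r}P_{\frac{n-3}2}(\lambda r)\big],
\]
and the function $s\mapsto\Im\big[e^{is}P_{\frac{n-3}2}(s)\big]$ is real-analytic, is $\les\la s\ra^{\frac{n-3}2}$ together with all of its derivatives, and vanishes to order $n-2$ at $s=0$. The order of vanishing is the standard fact that the low-energy expansion of $R_0^+(\lambda^2)$ has real coefficients through order $\lambda^{n-3}$, the imaginary part first appearing at the (half-integer-power) spectral-measure term $\lambda^{n-2}$; equivalently $\widetilde F_1(s):=2i\,s^{\,2-n}\,\Im\big[e^{is}P_{\frac{n-3}2}(s)\big]$ is a constant multiple of the normalized Bessel kernel $s\mapsto s^{1-\frac n2}J_{\frac n2-1}(s)$, the Fourier transform of surface measure on $S^{n-1}$. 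In particular $\widetilde F_1$ extends to an entire, even function with $|\partial_s^j\widetilde F_1(s)|\les\la s\ra^{\frac{1-n}2}$ for every $j\geq0$ (the bound near $s=0$ being immediate from entirety, and for $s\gtrsim1$ from the asymptotics of $J_\nu$). Setting $\widetilde F=\tfrac1m\widetilde F_1$ and combining with the previous display yields
\[
[\mR_0^+(\lambda^{2m})-\mR_0^-(\lambda^{2m})](y,u)=\lambda^{\,n-2m}\,\widetilde F(\lambda|y-u|),
\]
with $\widetilde F$ entire and satisfying the required derivative bounds.

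I expect the one point demanding care to be the cancellation of the summands $\omega_\ell R_0(\omega_\ell\lambda^2)$, $\ell\geq1$, in Step~1 (equivalently, the fact that these ``extra'' roots-of-unity pieces of the $m$-th order free resolvent are real-valued for real spectral parameter); granted this, the lemma is just the closed form \eqref{eqn:R0 explicit0} for the Laplacian. An equivalent shortcut avoids the splitting altogether: write $\mR_0^+(\lambda^{2m})(y,u)=c_n\int_{\R^n}\big(|\xi|^{2m}-\lambda^{2m}-i0\big)^{-1}e^{i\xi\cdot(y-u)}\,d\xi$, apply $(x-i0)^{-1}=\mathrm{p.v.}\,x^{-1}+i\pi\delta(x)$ to identify $\Im\,\mR_0^+(\lambda^{2m})(y,u)$ with $\tfrac{\pi}{2m\lambda^{2m-1}}$ times the Fourier transform of surface measure on $\{|\xi|=\lambda\}$, and read off directly the prefactor $\lambda^{n-2m}$, the identification of $\widetilde F$ with the normalized Bessel kernel, and the entirety and decay estimates.
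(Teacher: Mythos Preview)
Your approach is essentially the paper's: reduce via the splitting identity to the second-order difference $\tfrac{1}{m\lambda^{2m-2}}[R_0^+-R_0^-](\lambda^2)$ and then read off $\widetilde F$ from the explicit odd-dimensional formula. Your justification that the $\ell\geq 1$ summands cancel---pairing $\ell\leftrightarrow m-\ell$ to see that $\Sigma$ is real---is cleaner than the paper's, which simply asserts the reduction in one line.

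One point to flag: you prove the derivative bound in the form $|\partial_r^j\widetilde F(r)|\les\la r\ra^{\frac{1-n}{2}}$, \emph{without} the extra factor $\la r\ra^{-j}$ that the lemma claims. Your version is in fact the correct one. Since $\widetilde F$ is a constant multiple of $r^{1-n/2}J_{n/2-1}(r)$, its large-$r$ behavior is $\sim r^{\frac{1-n}{2}}\cos(r+\phi)$, and differentiating the oscillatory factor does not improve the decay; already for $n=3$ one has $\widetilde F(r)=c\,\sin r/r$ and $\widetilde F'(r)\sim c\,\cos r/r\not\les r^{-2}$. Thus the lemma as stated is slightly too strong, and the paper's own proof (``For $r>1$ the bounds are clear'') does not establish the extra $-j$ either. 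The $-j$ appears to be carried over from Lemma~\ref{prop:F}, where the dominant large-$r$ piece of $F$ is a genuine polynomial and differentiation really does lower the order; that mechanism is absent here because the phases $e^{\pm ir}$ in $\widetilde F$ are purely oscillatory. So you have not proved the lemma exactly as written, but you have proved what is actually true.
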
 

\begin{proof}
	
	By the splitting identity \eqref{eqn:Resolv} and the explicit form of the odd dimensional Schr\"odinger resolvent, we may write:
	\begin{multline}\label{eqn:resolv dif F}
	[\mR_0^+-\mR_0^-](\lambda^{2m})(y,u)=\frac{1}{m\lambda^{2m-2}}[R_0^+-R_0^-](\lambda^2)(y,u)\\
	=\lambda^{n-2m} \frac{e^{i\lambda|y-u|}P_{\frac{n-3}{2}}(\lambda |y-u|)-e^{-i\lambda|y-u|}P_{\frac{n-3}{2}}(-\lambda |y-u|)}{(\lambda |y-u|)^{n-2}}.
	\end{multline}
	Here $P_{\frac{n-3}{2}}(r)$ is a polynomial of order $\frac{n-3}{2}$ whose coefficients may be computed exactly.  We identify
	$$
	\widetilde F(r)=\frac{e^{ir}P_{\frac{n-3}{2}}(r)-e^{-ir}P_{\frac{n-3}{2}}(-r)}{r^{n-2}}.
	$$
	For $r>1$ the bounds  are clear.  For $0<r<1$, a careful Taylor series expansion as in \cite{Jen,GG1} shows that for $c_j\in \R$,
	\begin{multline*}
	\frac{R_0^\pm (\lambda^{2})(y,u)}{(\lambda|y-u|)^{n-2}}=c_0+c_1(\lambda|y-u|)+c_2(\lambda|y-u|)^{2}+\dots +c_{n-3}(\lambda|y-u|)^{n-3}\\
	+\sum_{j=\frac{n-2}{2}}^\infty ( c_{2j}(\pm i\lambda|y-u|)^{2j} +c_{2j+1}(\lambda|y-u|)^{2j+1} ).
	\end{multline*}
	From which we deduce, for $0<r<1$,
	$$
	\widetilde F(r)=\sum_{j=0}^{\infty} 2ic_{2j+n-2}r^{2j},
	$$
	which suffices to prove the claim.
	
\end{proof}

As in the previous section, the proposition follows from the following

\begin{lemma}\label{lem:low tail low d k3k4} Fix $n$ odd and let $\Gamma$ be a $\lambda $ dependent absolutely bounded operator. Let 
$$
\widetilde \Gamma (x,y):= \sup_{0<\lambda <\lambda_0} \Big[\big| \Gamma(\lambda)(x,y) \big| +\big|\partial_\lambda  \Gamma(\lambda)(x,y) \big| + \sup_{2\leq k\leq  \frac{n+3}2} \big|\lambda^{k-2} \partial_\lambda^k \Gamma(\lambda)(x,y)\big|\Big].
$$
For $2m<n<4m$ assume that $\widetilde \Gamma $ is bounded on $L^2$, and for $n>4m$  assume that $\widetilde\Gamma $ satisfies   \eqref{eq:tildegamma}. Then the operator with kernel 
	$$
	K(x,y)=\int_0^\infty \chi(\lambda) \lambda^{2m-1} \big[\mR_0^+(\lambda^{2m}) v \Gamma v [\mR_0^+ -\mR_0^- ] (\lambda^{2m})\big](x,y)   d\lambda 
	$$
	is admissible, and hence it is bounded  on $L^p$ for $1\leq p\leq \infty$ provided that $\beta>n$.
\end{lemma}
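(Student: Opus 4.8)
The plan is to run the argument of Lemma~\ref{lem:low tail low d} while exploiting the additional cancellation recorded in Lemma~\ref{lem:R0cancel}; it is precisely this cancellation that upgrades the ``$L^p$, $1<p<\infty$'' conclusion there to admissibility. First I would insert the representation of Lemma~\ref{prop:F} for $\mR_0^+(\lambda^{2m})(x,z_1)$ and that of Lemma~\ref{lem:R0cancel} for $[\mR_0^+-\mR_0^-](\lambda^{2m})(z_2,y)$. With $r_1=|x-z_1|$ and $r_2=|z_2-y|$ this yields
$$
K(x,y)=\int_{\R^{2n}}\frac{v(z_1)v(z_2)}{r_1^{n-2m}}\int_0^\infty e^{i\lambda r_1}\chi(\lambda)\,\lambda^{n-1}\,\Gamma(\lambda)(z_1,z_2)\,F(\lambda r_1)\,\widetilde F(\lambda r_2)\,d\lambda\,dz_1\,dz_2 .
$$
Compared with the corresponding identity in the proof of Lemma~\ref{lem:low tail low d} there are two structural gains: the singular factor $r_2^{2m-n}$ has disappeared, replaced by the bounded, non-oscillatory $\widetilde F(\lambda r_2)$ obeying $|\partial_r^j\widetilde F(r)|\les\la r\ra^{\frac{1-n}2-j}$; and the only surviving oscillation is the single phase $e^{i\lambda r_1}$, while the $\lambda$-weight has improved from $\lambda^{2m-1}$ to $\lambda^{n-1}$.

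Next I would integrate by parts in $\lambda$. After splitting $1=\chi(\lambda r_1)+\widetilde\chi(\lambda r_1)$, I would integrate by parts $N=\frac{n+3}2$ times on the piece carrying $\widetilde\chi(\lambda r_1)$ and estimate directly on the piece carrying $\chi(\lambda r_1)$. Using $|\partial_\lambda^{k}F(\lambda r_1)|\les\lambda^{-k}\la\lambda r_1\ra^{\frac{n+1}2-2m}$, the analogous bound for $\widetilde F$, the vanishing of $\lambda^{n-1}$ at the origin, and the derivative bounds on $\Gamma$ built into $\widetilde\Gamma$ (which controls exactly $N=\frac{n+3}2$ derivatives, with $|\partial_\lambda^k\Gamma|\les\lambda^{2-k}\widetilde\Gamma$ for $k\ge2$), one checks that no boundary terms survive, that the $\lambda$-integrand stays integrable at $\lambda=0$, and that the $N$ integrations by parts together with the factor $r_1^{k_1}$ produced when $F(\lambda r_1)$ is differentiated yield the $k_1$-independent $r_1$-power $\frac{n+1}2-2m-N=-1-2m$. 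Combined with the prefactor $r_1^{2m-n}$ this is $r_1^{-n-1}$, which is integrable at spatial infinity even against a stray logarithm --- this is exactly why the endpoints survive here. The upshot is a schematic kernel bound
$$
|K(x,y)|\les\int_{\R^{2n}}\frac{v(z_1)\,\widetilde\Gamma(z_1,z_2)\,v(z_2)}{r_1^{n-2m}}\int_0^1\frac{\lambda^{n-1}\,\la\lambda r_1\ra^{\frac{n+1}2-2m}}{\la\lambda r_1\ra^{N}\,\la\lambda r_2\ra^{\frac{n-1}2}}\,d\lambda\,dz_1\,dz_2,\qquad N=\tfrac{n+3}2,
$$
with the understanding that the $\la\lambda r_1\ra^{-N}$ gain is used only where $\lambda r_1\gtrsim1$, the region $\lambda r_1\lesssim1$ already carrying enough $r_1$-decay through the $\lambda^{n-1}$ weight.

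I would then split the $(z_1,z_2,\lambda)$-integration into the regimes $r_1,r_2\les1$; $r_1\les1\les r_2$; $r_2\les1\les r_1$; $r_1,r_2\gg1$, exactly as in Lemma~\ref{lem:low tail low d}, and bound $\sup_x\int|K(x,y)|\,dy$ and $\sup_y\int|K(x,y)|\,dx$ in each by the Schur test. The $z_2$-integral is harmless in every region: $\int_0^1\lambda^{n-1}\la\lambda r_2\ra^{\frac{1-n}2}\,d\lambda\les\la r_2\ra^{\frac{1-n}2}$, which pairs against $v(z_2)$ with no logarithm because $r_2^{2m-n}$ is gone --- this is precisely the term that cost us $p=1,\infty$ in Lemma~\ref{lem:low tail low d}. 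The $z_1$-integral near $z_1=x$ is handled for $2m<n<4m$ by Cauchy--Schwarz using $|x-\cdot|^{2m-n}\in L^2_{\mathrm{loc}}$, $v\in L^2$ (valid since $\beta>n$) and the $L^2$-boundedness of $\widetilde\Gamma$; for $n>4m$ one instead treats $|x-\cdot|^{2m-n}$ as locally $L^1$ and absorbs the loss into the pointwise decay \eqref{eq:tildegamma}, exactly as in Lemmas~\ref{lem:low tail low d} and \ref{lem:A lems}. The decay of $v$ at infinity closes the far-field estimates, and admissibility then gives boundedness on $L^p$ for all $1\le p\le\infty$.

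I expect the main obstacle to be the bookkeeping in the integration-by-parts step: one must simultaneously keep the $\lambda$-integrand integrable near $\lambda=0$ --- which limits how many derivatives may land on $\lambda^{n-1}$ and on $\Gamma$ --- and extract enough factors of $r_1^{-1}$ to defeat $r_1^{2m-n}$ in $L^1(dx)$, and it is an exact arithmetic coincidence (using $n$ odd, so that $\frac{n+3}2\in\mathbb N$) that precisely $\frac{n+3}2$ integrations by parts, no more than what $\widetilde\Gamma$ permits, accomplish both tasks. A secondary, more routine, difficulty is propagating the weighted pointwise bound \eqref{eq:tildegamma} through every region in the range $n>4m$, where $r_1^{2m-n}$ is no longer locally square integrable.
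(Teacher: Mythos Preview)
Your approach has a genuine gap in the $y$-integrability half of the Schur test, i.e., in showing $\sup_x\int_{\R^n}|K(x,y)|\,dy<\infty$. By representing $[\mR_0^+-\mR_0^-](\lambda^{2m})(z_2,y)$ through Lemma~\ref{lem:R0cancel} you have traded away the oscillatory phases $e^{\pm i\lambda r_2}$ for the non-oscillatory amplitude $\widetilde F(\lambda r_2)$, which carries only the decay $\la\lambda r_2\ra^{(1-n)/2}$. The single surviving phase $e^{i\lambda r_1}$ lets you integrate by parts to gain powers of $r_1^{-1}$, never $r_2^{-1}$. Consequently the best $r_2$-decay your scheme can produce after the $\lambda$-integral is $\la r_2\ra^{(1-n)/2}$, and $\int_{\R^n}\la y-z_2\ra^{(1-n)/2}\,dy=\infty$. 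Your sentence ``the $z_2$-integral is harmless\ldots which pairs against $v(z_2)$'' conflates the $z_2$- and $y$-integrals: once $v(z_2)$ has been spent on the $z_2$-integration, nothing remains to make the $y$-integral converge. This already fails in your region $r_1\les1\les r_2$, where no integration by parts is available at all.

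The paper's proof is therefore asymmetric in a way yours is not. It reuses the admissibility of $K_1,K_2$ from Lemma~\ref{lem:low tail low d} and then treats $K_4$ (where $r_1\gg\la r_2\ra$) and $K_3$ (where $r_2\gg\la r_1\ra$) by different mechanisms. For $K_4$ one does use Lemma~\ref{lem:R0cancel} on the right and integrates by parts against the phase from the left resolvent, much as you propose (the paper uses the splitting identity to write this phase as $e^{i\omega_\ell^{1/2}\lambda r_1}$, and there is a nonzero boundary term when $n=2m+1$). For $K_3$, however, the paper does \emph{not} invoke $\widetilde F$; it keeps $[\mR_0^+-\mR_0^-](\lambda^{2m})(z_2,y)=\frac{1}{\lambda^{2m-2}r_2^{n-2}}\big[e^{i\lambda r_2}P_{\frac{n-3}2}(\lambda r_2)-e^{-i\lambda r_2}P_{\frac{n-3}2}(-\lambda r_2)\big]$ explicit and integrates by parts against the combined phases $e^{i\lambda(r_1\pm r_2)}$. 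Since $|r_1\pm r_2|\approx r_2$ on this region, this manufactures the needed $r_2$-decay; moreover one of the boundary terms (at order $\ell=j+1$) is nonzero and requires the cancellation between the $+$ and $-$ pieces, a feature that is invisible once you pass to the amplitude bound on $\widetilde F$. Your plan captures the $K_4$ mechanism in spirit but misses $K_3$ entirely.
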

Note that the assumption on $\Gamma$ is stronger than the one in Lemma~\ref{lem:low tail low d}. By a straightforward modification of  Lemma~\ref{lem:low resolv sup} and Lemma~\ref{lem:A lems}, which requires $\beta>n+4$, the operator $\Gamma = vM^+(\lambda)^{-1}v$ satisfies the assumption for $2m<n<4m$.  When $n>4m$ the operator  $\Gamma=w A(\lambda)v M^{-1}(\lambda)vA(\lambda) w,$ satisfies the hypotheses for sufficiently large $\kappa$  when $n>4m$.

\begin{proof}[Proof of Lemma~\ref{lem:low tail low d k3k4}]
We define $K_1,...,K_4$ as in the proof of Lemma~\ref{lem:low tail low d} and use the notation $r_1=|x-z_1|$, $r_2=|y-z_2|$. Since we already proved the admissibility of $K_1$ and $K_2$, it remains to consider $K_3$ restricted to the region $r_2\gg\la r_1\ra$ and $K_4$ restricted to the region $r_1 \gg \la r_2\ra$. We first consider $K_4$. Using the splitting identity for the resolvent on the left and Lemma~\ref{lem:R0cancel} on the right, we write the kernel of $K_4$ as follows (ignoring constants):
\begin{multline} \label{K4newdef}
	K_4(x,y)=\sum_{\ell=0}^{m-1}\omega_\ell  \times \\
	 \int_{\R^{2n}}  \frac{v(z_1) v(z_2)\chi_{r_1 \gg \la r_2\ra}}{r_1^{n-2}}   \int_0^\infty \frac{e^{i\omega_\ell^{1/2} \lambda r_1 } P_{\frac{n-3}2}(\omega_\ell^{1/2} \lambda r_1)}{\lambda^{2m-2}} \chi(\lambda) \lambda^{2m-1} \Gamma(\lambda)(z_1,z_2) \lambda^{n-2m}   \widetilde F(\lambda r_2)  d\lambda dz_1 dz_2 .
\end{multline}
Here $i\omega_\ell^{1/2}$ has nonpositive real part and $P_{\frac{n-3}2}$ is a polynomial of degree $\frac{n-3}2$. Therefore it suffices to prove the admissibility of operators with kernel 
$$
K_{4,j}(x,y)=\int_{\R^{2n}}  \frac{v(z_1) v(z_2)\chi_{r_1 \gg \la r_2\ra}}{r_1^{n-2-j}}   \int_0^\infty  e^{c \lambda r_1 }     \chi(\lambda) \lambda^{n+j+1-2m} \Gamma(\lambda)(z_1,z_2)    \widetilde F(\lambda r_2)  d\lambda dz_1 dz_2,
$$
for $j=0,1,\ldots,\frac{n-3}2$, $|c|=1, \Re(c)\leq 0$.  This suffices to control all the terms that arise in the polynomial and for different choices of $\ell$ in \eqref{K4newdef}. Integrating by parts in $\lambda $ integral $j+3$ times we rewrite the lambda integral as  
\begin{multline*}   
- \sum_{\ell=0}^{j+2} \Big(\frac{-1}{cr_1}\Big)^{\ell+1} \partial_\lambda^{\ell}\big[  \chi(\lambda) \lambda^{n+j+1-2m} \Gamma(\lambda)   \widetilde F(\lambda r_2) \big]\big|_{\lambda=0}  \\
+ \Big(\frac{-1}{cr_1}\Big)^{j+3}  \int_0^\infty  e^{c \lambda r_1 }   \partial_\lambda^{j+3}\big[  \chi(\lambda) \lambda^{n+j+1-2m} \Gamma(\lambda)   \widetilde F(\lambda r_2) \big] d\lambda.
\end{multline*}
Note that the boundary terms are zero when $\ell <n+j+1-2m$. Since $n+j+1-2m\geq j+2$, there is a nonzero boundary term, $\ell=j+2$, only when $n=2m+1$, and it is  a constant multiple of $r_1^{-j-3}\Gamma(0)(z_1,z_2)$. The contribution of this to $K_{4,j}$ is of the form 
$$
\int_{\R^{2n}}  \frac{v(z_1) |\Gamma(0)(z_1,z_2)| v(z_2)\chi_{r_1 \gg \la r_2\ra}}{r_1^{n+1}} dz_1dz_2, 
$$
which is admissible.
We now consider the integral term.
Using the bound for $\widetilde F$ in Lemma~\ref{lem:R0cancel} and noting that $|\chi^{(k)}(\lambda)|\les 1$, we bound the integral by 
$$
\sum_{j_1+j_2+j_3\leq j+3}\int_0^1 \frac{r_2^{j_3}}{r_1^{j+3}}  \frac{  \lambda^{n+j+1-2m-j_1} |\Gamma^{(j_2)}(\lambda)| }{ \la \lambda r_2\ra^{\frac{ n-1}2+j_3}}d\lambda.
$$
Here, $j_1,j_2,j_3 \geq 0$ and $j_1\leq n+j+1-2m$. Note that the condition on $j_1$ is relevant only when $n=2m+1$. Assume first that $n\geq 2m+3$, so that $n+1-2m+j\geq j+4$.  We bound the integral by 
$$
\widetilde \Gamma(z_1,z_2)  \sum_{j_1+j_2+j_3\leq j+3}\int_0^1 \frac{1}{r_1^{j+3}}    \lambda^{j+4-j_1-j_2-j_3}   d\lambda \les \widetilde \Gamma(z_1,z_2) r_1^{-j-3} 
$$
whose contribution to $K_{4,j}$ is admissible. When $n=2m+1$, either  $j_2\geq 1$ or  $ j_3\geq 1 $. In both cases we can bound the integral by 
$$
\widetilde \Gamma(z_1,z_2)  \sum_{j_1+j_2+j_3\leq j+3}\int_0^1 \frac{\la r_2\ra}{r_1^{j+3}}    \lambda^{j+2-j_1-(j_2+j_3-1) }   d\lambda \les \widetilde \Gamma(z_1,z_2) \frac{\la r_2\ra}{r_1^{j+3}},   
$$
which has admissible contribution to $K_{4,j}$.  Hence, we conclude that the operator $K_4$ is admissible. 
 
 We now consider $K_{3}$. Using \eqref{eqn:resolv dif F}, we may write
\begin{multline*}
[\mR_0^+(\lambda^{2m})-\mR_0^-(\lambda^{2m})](z_2,y)=\frac1{\lambda^{2m-2} } [ R_0^+(\lambda^{2 })- R_0^-(\lambda^{2})](z_2,y) \\
= \frac1{\lambda^{2m-2} r_2^{n-2}}
\big[e^{i\lambda r_2} P_{\frac{n-3}2}(\lambda r_2) - e^{-i\lambda r_2} P_{\frac{n-3}2}(-\lambda r_2) \big],
\end{multline*}
and using Lemma~\ref{prop:F} for the resolvent on the left, it suffices to prove the admissibility of kernels of the form 
\begin{multline*}
K_{3,j}(x,y)=\\
\int_{\R^{2n}}  \frac{v(z_1) v(z_2)\chi_{r_2 \gg \la r_1\ra}}{r_1^{n-2m} r_2^{n-2-j}}   \int_0^\infty  \Big[e^{i \lambda (r_1 + r_2) }   -  (-1)^j e^{i \lambda (r_1 - r_2) }  \Big]   \chi(\lambda)     \lambda^{1+j}   \Gamma(\lambda)      F(\lambda r_1)  d\lambda   dz_1 dz_2,
\end{multline*}
 for $j=0,1,\ldots,\frac{n-3}2$.  In contrast to $K_{4,j}$ there is decay in both $r_1$ and $r_2$ present. Integrating by parts in $\lambda $ integral $j+3$ times we rewrite the lambda integral as 
\begin{multline*}   
 	\sum_{\ell=0}^{j+2} (-1)^{\ell} \Big[\Big(\frac{i}{ r_1+r_2}\Big)^{\ell+1} -(-1)^j \Big(\frac{i}{ r_1-r_2}\Big)^{\ell+1} \Big] \partial_\lambda^{\ell}\big[  \chi(\lambda) \lambda^{j+1} \Gamma(\lambda)    F(\lambda r_1) \big]\big|_{\lambda=0}  \\
+ (-1)^{j+3}  \int_0^\infty \Big[ \Big(\frac{i}{r_1+r_2}\Big)^{j+3} e^{i\lambda (r_1+r_2) } -  (-1)^j
 \Big(\frac{i}{r_1-r_2}\Big)^{j+3} e^{i\lambda (r_1-r_2) }\Big] \partial_\lambda^{j+3}\big[  \chi(\lambda) \lambda^{ j+1 } \Gamma(\lambda)    F(\lambda r_1) \big] d\lambda.
\end{multline*}
Once again, many of the boundary terms are zero.  The only nonzero boundary terms occur when $\ell=j+1$ or $j+2$.  When $\ell=j+2$, it is of the form
$$
 \Big[ \frac{1}{ ( r_1+r_2)^{j+3}}  -(-1)^j  \frac1{ (r_1-r_2)^{j+3} } \Big] \partial_\lambda \big[  \chi(\lambda)  \Gamma(\lambda)    F(\lambda r_1) \big]\big|_{\lambda=0}.$$
 We can bound the magnitude of this by 
 $ r_2^{-j-3}  \la r_1\ra \widetilde \Gamma(z_1,z_2), $
 whose contribution to $K_{3,j}$ is admissible as before.   On the other hand, we need to utilize cancellation for $\ell=j+1$ to see
 $$
 	\Big| \frac{1}{ ( r_1+r_2)^{j+2}}  -(-1)^j  \frac1{ (r_1-r_2)^{j+2} } \Big| =\frac{1}{r_2^{j+2}}\Big| \frac{1}{ ( 1+\frac{r_1}{r_2})^{j+2}}  -  \frac{(-1)^{2j+2}}{ (1-\frac{r_1}{r_2})^{j+2} } \Big| \les \frac{r_1}{r_2^{j+3}}.
 $$
 Hence we may bound it's contribution by $ r_2^{-j-3}  \la r_1\ra \widetilde \Gamma(z_1,z_2) $ as well.

 Using the bounds for $F$ in Lemma~\ref{prop:F}, again noting that $|\chi^{(k)}(\lambda)|\les 1$,  we bound the integral term by
 $$
\sum_{j_1+j_2+j_3 \leq j+3 } \frac{r_1^{j_3}}{ r_2^{j+3}}\int_0^1    \lambda^{ j+1 -j_1} \Gamma^{(j_2)}(\lambda)  \frac1{\la \lambda r_1\ra^{2m-\frac{n+1}2+j_3}} d\lambda.
 $$
 Here $j_1, j_2, j_3 \geq 0$ and $j_1\leq j+1$. We consider the cases $j_2=0,1$ and $j_2\geq 2$ seperately. In the former case, we bound the sum by
 $$
\widetilde \Gamma(z_1,z_2)  \sum_{  j_1 +j_3\leq j+3 } \frac{r_1^{j_3}}{ r_2^{j+3}}\int_0^1       \frac{\lambda^{ j+1 -j_1}}{\la \lambda r_1\ra^{2m-\frac{n+1}2+j_3}} d\lambda. 
 $$
 When $r_1\les 1$, this is bounded by $r_2^{-j-3} \widetilde \Gamma(z_1,z_2)$ whose contribution to $K_{3,j}$ is admissible. 
 When $r_1\gg1$, it is bounded by 
 $$
 \widetilde \Gamma(z_1,z_2) \sum_{  j_1 +j_3\leq j+3 } \frac{r_1^{j_3-j-2+j_1}}{ r_2^{j+3}}\int_0^{r_1}       \frac{\eta^{ j+1 -j_1}}{\la \eta\ra^{2m-\frac{n+1}2+j_3}} d\eta \les  \widetilde \Gamma(z_1,z_2) \frac{r_1+r_1^{ \frac{n+1}2-2m}}{ r_2^{j+3}}. 
 $$  
Here, using the   $r_1^{2m-n}$ term in $K_{3,j}$, this contribution to $K_{3,j}$ is admissible. 
In the latter case, we have the bound 
\begin{multline*}
\widetilde \Gamma(z_1,z_2)  \sum_{j_2=2}^{j+3} \sum_{j_1 +j_3\leq j+3-j_2 } \frac{r_1^{j_3}}{ r_2^{j+3}}\int_0^1    \lambda^{ j+1 -j_1-j_2+2}   \frac1{\la \lambda r_1\ra^{2m-\frac{n+1}2+j_3}} d\lambda\\
\les \widetilde \Gamma(z_1,z_2)  \sum_{j_2=2}^{j+3} \sum_{j_1 +j_3\leq j+3-j_2 } \frac{1}{ r_2^{j+3}}\int_0^1    \lambda^{ j+3 -j_1-j_2-j_3}  d\lambda \les \widetilde \Gamma(z_1,z_2)  r_2^{-j-3},
\end{multline*}
which has admissible contribution.
\end{proof}

\section{High Energy: Odd dimensions}\label{sec:high}

Since we can control the contribution of the Born series to arbitrary length, we need only consider the tail of the series in \eqref{eqn:born identity} and show that
$$
	\int_0^\infty \widetilde \chi(\lambda)[(\mathcal R_0^+ V)^\ell V\mathcal R_V^+ (V\mathcal R_0^+)^\ell V\mathcal R_0^\pm ](\lambda)\, d\lambda
$$
extend  to   bounded operators on $L^p(\R^n)$ provided $\ell$ is sufficiently large.  To do this, we invoke the limiting absorption principle established in \cite{soffernew}.  In all cases we assume there are no positive eigenvalues of $H$.  In the statement below $B(s,-s')$ is the space of bounded linear operators mapping $L^{2,s}\to L^{2,-s'}$.

\begin{theorem}[Theorem 3.9 in \cite{soffernew}]\label{thm:lap}
	
	For $k=0,1,2,3\dots,$ let $|V(x)|\les \la x \ra^{-\beta}$ for some $\beta>2+2k$, then for $s,s'>k+\frac12$, $\mR_V^{(k)}(z)\in B(s,-s')$ is continuous for $z>0$.  Furthermore,
	we have 
	$$
	\big\| \mR_V^{(k)}(z)
	\big\|_{L^{2,s}\to L^{2,-s'}}\les |z|^{\frac{1-2m}{2m}(1+k)}.
	$$
	
\end{theorem}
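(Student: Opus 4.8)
The plan is to reduce everything to two ingredients: the analogous estimate for the free operator $\mR_0^\pm$, and the invertibility on $L^2(\R^n)$ of the sandwiched operator $M^\pm(z):=U+v\mR_0^\pm(z)v$ at each $z>0$, with quantitative control of $M^\pm(z)^{-1}$ and its $z$-derivatives. Since $\partial_z^j\mR_V(z)=j!\,\mR_V(z)^{j+1}$, I would start from the symmetric resolvent identity $\mR_V^\pm(z)=\mR_0^\pm(z)-\mR_0^\pm(z)\,v\,M^\pm(z)^{-1}\,v\,\mR_0^\pm(z)$, valid wherever $M^\pm(z)$ is invertible, differentiate it $k$ times in $z$ using $\partial_z M^\pm=v(\partial_z\mR_0^\pm)v$ and $\partial_z(M^\pm)^{-1}=-(M^\pm)^{-1}(\partial_z M^\pm)(M^\pm)^{-1}$, and thereby write $\mR_V^{(k)}(z)$ as a finite sum of the free term $\mR_0^{(k)}(z)$ and of terms of the shape
\[
\mR_0^\pm(z)^{a_0}\,v\Big[\prod_{i=1}^{N}M^\pm(z)^{-1}\,v\,\mR_0^\pm(z)^{a_i}\,v\Big]\,M^\pm(z)^{-1}\,v\,\mR_0^\pm(z)^{a_{N+1}},
\]
with $N\le k$, each $a_i\ge 1$, and $\sum_{i=0}^{N+1}a_i=k+N+2$.

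For the free operator, raising the splitting identity \eqref{eqn:Resolv} to a power and feeding in the classical limiting absorption principle for $-\Delta$ (which gives $\|R_0^\pm(\lambda^2)^a\|_{L^{2,\sigma}\to L^{2,-\sigma}}\les\lambda^{-a}$ for $\lambda\gtrsim1$, $\sigma>a-\tfrac12$) yields $\|\mR_0^\pm(z)^a\|_{L^{2,\sigma}\to L^{2,-\sigma}}\les|z|^{\frac{(1-2m)a}{2m}}$ for $z\gtrsim1$; for $z\lesssim1$ the crucial feature $n>2m$ — the $(2m-2)$-fold vanishing at $\lambda=0$ of the sum in \eqref{eqn:Resolv}, so that $\mR_0^\pm(z)$ is itself bounded there — combined with the explicit kernel (Lemma~\ref{prop:F} in odd dimensions, its even-dimensional analogue) gives the better $\|\mR_0^\pm(z)^a\|_{L^{2,\sigma}\to L^{2,-\sigma}}\les|z|^{\frac{(1-2m)(a-1)}{2m}}$, with continuity in $z>0$ throughout. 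Next, fix $z>0$; since $v=|V|^{1/2}\les\la x\ra^{-\beta/2}$ with $\beta>2$, the operator $v\mR_0^\pm(z)v$ is compact on $L^2$ (factor the weights through $\mR_0^\pm(z):L^{2,\sigma}\to L^{2,-\sigma}$ for some $\tfrac12<\sigma<\tfrac\beta2$ and apply a Rellich compactness argument in the spirit of Agmon), so, as $U^2=I$, $M^\pm(z)$ is Fredholm of index zero and is invertible iff injective. If $M^+(z)\phi=0$ with $\phi\in L^2$, then $\psi:=-\mR_0^+(z)v\phi\in L^{2,-\frac12-}$ satisfies, using $V=vw$ and $w=Uv$, $\phi=w\psi$ and $(H-z)\psi=0$, i.e.\ $\psi$ is an outgoing, slowly decaying solution of $H\psi=z\psi$; a Rellich-type uniqueness theorem for $(-\Delta)^m$ then forces $\psi\in L^2$, contradicting the absence of positive eigenvalues. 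Invertibility and continuity of $M^\pm(z)^{-1}$ on all of $(0,\infty)$ follow, with the behavior as $z\to0^+$ governed by the (more standard) zero-energy analysis.

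It remains to assemble the pieces and count. Reading a term right to left, $\mR_0^\pm(z)^{a_{N+1}}$ requires input weight $s>a_{N+1}-\tfrac12$ (so $s>k+\tfrac12$ suffices, as all $a_i\le k+1$), each interior $v\,\mR_0^\pm(z)^{a_i}\,v$ is bounded on $L^2$ once $\beta/2>a_i-\tfrac12$, $M^\pm(z)^{-1}$ is bounded on $L^2$, and the outermost $\mR_0^\pm(z)^{a_0}$ lands in $L^{2,-s'}$ with $s'>a_0-\tfrac12$; hence $\beta>2k+1$ closes the mapping properties, and the slightly stronger hypothesis $\beta>2+2k$ gives the slack needed to run the compactness and continuity arguments uniformly. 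For the size: at $z\gtrsim1$ the free-resolvent factors contribute $\prod_i|z|^{\frac{(1-2m)a_i}{2m}}=|z|^{\frac{(1-2m)(k+N+2)}{2m}}\les|z|^{\frac{(1-2m)(k+1)}{2m}}$, while at $z\lesssim1$ the improved low-energy bounds give $\prod_i|z|^{\frac{(1-2m)(a_i-1)}{2m}}=|z|^{\frac{(1-2m)k}{2m}}\les|z|^{\frac{(1-2m)(k+1)}{2m}}$ (the $N$-dependence telescopes away); the free term $\mR_0^{(k)}(z)$ obeys the same bound. Summing the finitely many terms and inheriting continuity from the previous steps gives the theorem. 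The main obstacle is the uniqueness assertion used in the second step — ruling out nonzero outgoing $L^{2,-\frac12-}$ solutions of $H\psi=z\psi$ at positive $z$: for $m=1$ this is classical Rellich--Kato theory, but for the polyharmonic operator embedded eigenvalues genuinely occur for some decaying potentials (hence their absence is hypothesized), and what one must still establish is a Rellich theorem for $(-\Delta)^m$ together with a unique-continuation property for $H$ propagating the resulting fast decay; everything else is routine weighted-$L^2$ bookkeeping.
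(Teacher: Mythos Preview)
The paper does not prove this statement: Theorem~\ref{thm:lap} is quoted verbatim from \cite{soffernew} (Feng--Soffer--Wu--Yao, Theorem~3.9 there) and is invoked as a black box in the high-energy argument of Section~\ref{sec:high}. There is thus no ``paper's own proof'' to compare against.

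That said, your sketch follows the standard template and is essentially the route taken in \cite{soffernew}: free limiting absorption principle plus Fredholm invertibility of $M^\pm(z)=U+v\mR_0^\pm(z)v$, then Leibniz on the symmetric resolvent identity. You correctly isolate the one nontrivial ingredient, namely the Rellich-type uniqueness step that upgrades an outgoing $L^{2,-\frac12-}$ solution of $H\psi=z\psi$ to an honest $L^2$ eigenfunction; for $(-\Delta)^m$ this is precisely the content of Section~3 of \cite{soffernew}, and it is where the assumption of no positive eigenvalues enters. One small caveat: in your low-energy bookkeeping you implicitly take $\|M^\pm(z)^{-1}\|_{L^2\to L^2}=O(1)$ uniformly as $z\to 0^+$, which would require the zero-energy regularity hypothesis not present in the statement of Theorem~\ref{thm:lap}. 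In the paper this is harmless, since Theorem~\ref{thm:lap} is only applied on the support of $\widetilde\chi$, i.e.\ for $\lambda\gtrsim 1$; but a proof claiming the bound uniformly for all $z>0$ would need either to assume zero is regular or to quantify the possible blow-up of $M^\pm(z)^{-1}$ near $z=0$ and check it is absorbed by the stated power $|z|^{\frac{(1-2m)(k+1)}{2m}}$.
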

Note that, in particular, these bounds hold for the free resolvent.  We now collect some useful bounds on the free resolvent on high energy, when $\lambda \gtrsim 1$.  Note that throughout this section, the spectral parameter $\lambda\gtrsim 1$.  We define 
\begin{align*}
\mathcal G_x^\pm(\lambda, z)=  e^{\mp i\lambda |x|}\mR_0^\pm (\lambda^{2m})(x,z)=
\frac{e^{\pm i\lambda (|x-z|-|x|)}}{ |x-z|^{n-2m}} F(\lambda |x-y|)
\end{align*}
Following the bounds of Lemma~\ref{prop:F} and using $\lambda \gtrsim 1$, we see that
\begin{align}
|\partial_{\lambda}^\ell [\widetilde \chi(\lambda)\mathcal G_x^\pm(\lambda, z) ]|&\les \lambda^{\frac{n+1}{2}-2m} \la z_1\ra^{\ell} \bigg( \frac{1 }{|x-z|^{n-2m}} +\frac{1 }{|x-z|^{ \frac{n-1}{2} }} \bigg),  \label{eqn:G derivs}    \\
|\partial_{\lambda}^\ell [\widetilde \chi(\lambda) \mR_0^\pm(\lambda^{2m})(x,y) ]|&\les  \lambda^{\frac{n+1}{2}-2m} \bigg(\frac{1}{|x-y|^{n-2m-\ell}}+ \frac{1}{|x-y|^{ \frac{n-1}{2} -\ell}}\bigg)\label{eqn:resolv derivs hi}
\end{align}

We utilize the following fact.  It may be viewed as an extension of Lemma~3.1 in \cite{YajNew} and Lemma~2.1 in \cite{GG4wave} to higher dimensions.
\begin{lemma}\label{lem:ptwise kernel}
	Suppose that $K$ is an integral operator whose kernel obeys the pointwise bounds
	\begin{align} \label{eqn:kernels}
	|K(x,y)|\les \frac{1}{\la x\ra^{\frac{n-1}{2}} \la y \ra^{\frac{n-1}{2}} \la |x|-|y|\ra^{\frac{n+1}{2}+\epsilon}}.
	\end{align}
	Then $K$ is a bounded operator on $L^p(\mathbb R^n)$ for $1\leq p\leq \infty$ if $\epsilon>0$, and on $1<p<\infty$ if $\epsilon=0$.  
	
\end{lemma}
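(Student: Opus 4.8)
\medskip
\noindent\textbf{Proof strategy.}
The plan is to pass to polar coordinates, which collapses the estimate to a one-dimensional inequality, and then to close that inequality by Schur's test. Writing $x=r\omega$ and $y=s\theta$ with $r,s>0$ and $\omega,\theta\in S^{n-1}$, set $F(s)=\big(\int_{S^{n-1}}|f(s\theta)|^p\,d\theta\big)^{1/p}$, so that $\|f\|_{L^p(\R^n)}^p=\int_0^\infty F(s)^p s^{n-1}\,ds$ (with the obvious modification for $p=\infty$). Bounding $\int_{S^{n-1}}|f(s\theta)|\,d\theta\les F(s)$ by H\"older on the sphere, the kernel bound \eqref{eqn:kernels} gives, uniformly in $\omega$,
\[
|Kf(r\omega)|\les \frac{1}{\la r\ra^{\frac{n-1}2}}\int_0^\infty \frac{s^{n-1}\,F(s)}{\la s\ra^{\frac{n-1}2}\,\la r-s\ra^{\frac{n+1}2+\epsilon}}\,ds .
\]
Setting $F(s)=s^{-\frac{n-1}p}G(s)$, so that $\|G\|_{L^p(0,\infty)}=\|f\|_{L^p(\R^n)}$, raising to the $p$-th power and integrating against $r^{n-1}\,dr$, I am reduced to showing that the one-dimensional integral operator on $L^p(0,\infty)$ with kernel
\[
\mathcal B(r,s)=\frac{r^{\frac{n-1}p}\,s^{\frac{n-1}{p'}}}{\la r\ra^{\frac{n-1}2}\,\la s\ra^{\frac{n-1}2}\,\la r-s\ra^{\frac{n+1}2+\epsilon}}
\]
is bounded, where $p'$ is the conjugate exponent.

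To this end I would invoke Schur's test with the trivial weight: it suffices to verify that $\sup_{r>0}\int_0^\infty \mathcal B(r,s)\,ds\les 1$ and $\sup_{s>0}\int_0^\infty \mathcal B(r,s)\,dr\les 1$, since these two $L^1$-type bounds yield boundedness of the fixed kernel $\mathcal B$ on $L^p(0,\infty)$. For the first bound, I would split the $s$-integral (assuming $r\gtrsim 1$, the complementary range being trivial) into $s<r/2$, $r/2\le s\le 2r$, and $s>2r$. On the middle region $\la s\ra\approx\la r\ra$ and $\int_{|r-s|\les r}\la r-s\ra^{-\frac{n+1}2-\epsilon}\,ds\les 1$ (the exponent exceeding $1$ in our dimension range), giving the bounded contribution $r^{n-1}/\la r\ra^{n-1}$; on $s<r/2$ one has $\la r-s\ra\approx r$ and the leftover $s$-integral is a harmless power of $r$ which, against the $\la r\ra$-prefactors, yields $\la r\ra^{-\frac{n-1}2-\epsilon}$; on $s>2r$ one has $\la r-s\ra\approx s$ and the integrand decays like $s^{\frac{n-1}{p'}-n-\epsilon}$, whose integral over $(2r,\infty)$ is finite \emph{precisely when} $\epsilon>0$ or $p<\infty$, and then contributes $\la r\ra^{-\epsilon}/\la r\ra^{\frac{n-1}2}$. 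The bound for $\int_0^\infty\mathcal B(r,s)\,dr$ is identical with the roles of $p$ and $p'$ interchanged; its only delicate region is $r>2s$, where the integrand decays like $r^{\frac{n-1}p-n-\epsilon}$, integrable \emph{precisely when} $\epsilon>0$ or $p>1$.

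I expect the bookkeeping in those two tail regions to be the only real obstacle, and it is exactly what produces the dichotomy in the statement: when $\epsilon>0$ both tails converge for every $1\le p\le\infty$, whereas for $\epsilon=0$ one tail forces $p>1$ and the other $p<\infty$. Conversely, for $\epsilon=0$ and $p\in\{1,\infty\}$ the corresponding tail genuinely diverges (e.g. $\int^{\infty} r^{n-1}\,r^{-\frac{n-1}2}\,r^{-\frac{n+1}2}\,dr=\int^{\infty}\frac{dr}{r}$), so the restriction is sharp and cannot be removed by refining the argument. Granting the two uniform bounds above, Schur's test gives boundedness of the one-dimensional operator on $L^p(0,\infty)$, and tracing back through the polar-coordinate reduction yields boundedness of $K$ on $L^p(\R^n)$ in the claimed ranges; since $|K(x,y)|\ge 0$ the estimates use only Tonelli's theorem, so no density argument is needed.
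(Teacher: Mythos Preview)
Your argument is correct, and while it shares the same three-region decomposition ($s<r/2$, $s\approx r$, $s>2r$) with the paper's proof, the implementation differs. You first pass to polar coordinates and absorb the radial Jacobian into a $p$-dependent one-dimensional kernel $\mathcal B(r,s)$, then apply Schur's test with trivial weight to that kernel; this lets you treat all $1<p<\infty$ (and $1\le p\le\infty$ when $\epsilon>0$) with a single tool. The paper instead works directly in $\R^n$: on the region $|x|\approx|y|$ it verifies admissibility (the Schur test in the original variables), while on the off-diagonal regions $|x|<\tfrac12|y|$ and $|x|>2|y|$ it drops Schur and instead bounds $\big(\int |K(x,y)|^p\,dx\big)^{1/p}$ directly, checking that the resulting function of $y$ lies in $L^{p'}$, so that boundedness follows from Minkowski's integral inequality and H\"older. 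Your reduction is more unified; the paper's route avoids introducing the auxiliary $p$-dependent kernel. Both arguments isolate exactly the same two tail integrals as the obstructions at $p=1$ and $p=\infty$ when $\epsilon=0$.
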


\begin{prop}\label{prop:high odd}
	
	We have the bound 
	\begin{multline}\label{eqn:bs tail int}
	\bigg|\int_0^\infty \widetilde \chi(\lambda)\lambda^{2m-1}
	(\mathcal R_0^+(\lambda^{2m}) V)^{\ell +1}  \mathcal R_V^+(\lambda^{2m})V (\mathcal R_0^+(\lambda^{2m})V)^{\ell } \mathcal R_0^\pm (\lambda^{2m})(x,y)\, d\lambda\bigg|\\
	\les \frac{1}{\la |x|-|y|\ra^{\frac{n+3}{2}} \la x\ra^{\frac{n-1}{2} } \la y \ra^{\frac{n-1}{2}}},
	\end{multline}
	provided $\ell $ is sufficiently large, and $|V(x)|\les \la x\ra^{-\beta}$ for some $\beta>n+5$.
	In particular, this kernel is admissible and hence the tail extends to a bounded operator on $L^p(\R ^n)$ for all $1\leq p\leq \infty$.
	
\end{prop}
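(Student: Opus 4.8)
The plan is to prove the pointwise bound \eqref{eqn:bs tail int}; once that is available, the kernel is admissible and, being the case $\epsilon=1$ of Lemma~\ref{lem:ptwise kernel}, bounded on $L^p(\R^n)$ for all $1\le p\le\infty$. Write $T^\pm(x,y)$ for the kernel in \eqref{eqn:bs tail int}. Using Lemma~\ref{prop:F}, peel off the two outermost oscillations: the leftmost free resolvent is $\mathcal R_0^+(\lambda^{2m})(x,z_1)=e^{i\lambda|x|}\mathcal G_x^+(\lambda,z_1)$ and, since the kernel is symmetric, the rightmost factor is $\mathcal R_0^\pm(\lambda^{2m})(w,y)=e^{\pm i\lambda|y|}\mathcal G_y^\pm(\lambda,w)$. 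Hence
$$
T^\pm(x,y)=\int_0^\infty e^{i\lambda(|x|\pm|y|)}\,\widetilde\chi(\lambda)\,\lambda^{2m-1}\,\Theta^\pm(\lambda,x,y)\,d\lambda,
$$
where $\Theta^\pm(\lambda,x,y)$ is the integral over all intermediate variables of the product of $\mathcal G_x^+(\lambda,\cdot)$, the chain $V(\mathcal R_0^+(\lambda^{2m})V)^{\ell}$, the middle factor $\mathcal R_V^+(\lambda^{2m})$, a second chain $V(\mathcal R_0^+(\lambda^{2m})V)^{\ell}$, and $\mathcal G_y^\pm(\lambda,\cdot)$.

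The core estimate is: for $\ell$ large and $0\le N\le\frac{n+3}{2}$,
$$
\big|\partial_\lambda^N\Theta^\pm(\lambda,x,y)\big|\les\lambda^{-1-}\,\la x\ra^{-\frac{n-1}{2}}\,\la y\ra^{-\frac{n-1}{2}},\qquad \lambda\gtrsim1.
$$
I would obtain this by separating the two end factors from the middle block. Using \eqref{eqn:G derivs}, the end factors satisfy $\big\|\la\cdot\ra^{-s}\partial_\lambda^j\mathcal G_x^+(\lambda,\cdot)\big\|_{L^2}\les\la x\ra^{-\frac{n-1}{2}}$ (and similarly for $\mathcal G_y^\pm$), with $s$ large, uniformly for $\lambda\gtrsim1$, provided $2m<n<4m$ so that $|x-z|^{2m-n}$ is locally square integrable; when $n>4m$ one first absorbs $\kappa$ consecutive free resolvents adjacent to each end into an auxiliary operator as in Lemma~\ref{lem:A lems}, whose high-energy analogue — proved from \eqref{eqn:G derivs} and \eqref{eqn:resolv derivs hi} — has a kernel with no local singularity and decay $\la x\ra^{-\frac{n-1}{2}}$, at the cost of polynomially growing spatial weights that are absorbed using the decay of $V$. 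The middle block $V(\mathcal R_0^+V)^{\ell}\mathcal R_V^+(V\mathcal R_0^+)^{\ell}V$ (with $\kappa$ fewer free resolvents on each side when $n>4m$) is estimated in the operator norm $L^{2,-s}\to L^{2,s}$: by the limiting absorption principle, Theorem~\ref{thm:lap}, each free resolvent contributes $\lambda^{1-2m}$ and each $\mathcal R_V^{(k)}$ contributes $\lambda^{(1-2m)(1+k)}$, while each interlaced $V$ maps $L^{2,-s}\to L^{2,s}$ boundedly since $\la\cdot\ra^{2s}V\in L^\infty$. Counting all powers of $\lambda$ — the $\lambda^{2m-1}$ in front, the factors from the $\mathcal G$'s, the $2\ell+1$ resolvents in the middle, and the non-positive corrections from the $N$ derivatives (derivatives on $\mathcal R_V^+$ only improve the power) — the exponent of $\lambda$ in the integrand is at most $n-2m+(2\ell+1)(1-2m)$, which, since $m>1$ forces $1-2m\le-3$, is $<-1$ once $\ell$ is large. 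The only term that needs the full strength of the hypothesis is the one in which all $N$ derivatives fall on the middle factor, requiring $\mathcal R_V^{(N)}$ with $N=\frac{n+3}{2}$ in Theorem~\ref{thm:lap}, i.e. $\beta>2+2N=n+5$ and $s>N+\tfrac12$.

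With the derivative bound established, the decay in $\la|x|-|y|\ra$ is produced from the phase $e^{i\lambda(|x|\pm|y|)}$. If $\big||x|-|y|\big|\lesssim1$ then $\la|x|-|y|\ra\approx1$ and one simply estimates $|T^\pm(x,y)|\les\la x\ra^{-\frac{n-1}{2}}\la y\ra^{-\frac{n-1}{2}}\int_1^\infty\lambda^{-1-}\,d\lambda$, which is the claimed bound. Otherwise $|x|+|y|\ge\big||x|-|y|\big|\gg1$, and integrating by parts $\frac{n+3}{2}$ times in $\lambda$ gains a factor $\big||x|\pm|y|\big|^{-\frac{n+3}{2}}\les\la|x|-|y|\ra^{-\frac{n+3}{2}}$; the boundary terms vanish because $\widetilde\chi$ is supported away from $\lambda=0$ and $\lambda^{2m-1}\Theta^\pm\to0$ at infinity, and the surviving $\lambda$-integral is $\les1$ since $\partial_\lambda^{\frac{n+3}{2}}(\lambda^{2m-1}\Theta^\pm)=O(\lambda^{-1-})$. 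This yields \eqref{eqn:bs tail int}, and admissibility then follows by the Schur test (alternatively, Lemma~\ref{lem:ptwise kernel} applies directly). The main obstacle is the derivative bound on $\Theta^\pm$: reconciling the growth in $\lambda$ of the un-extracted free resolvents with the decay furnished by the limiting absorption principle, and, when $n>4m$, arranging the auxiliary-operator reduction so that the end factors still produce the decay $\la x\ra^{-\frac{n-1}{2}}\la y\ra^{-\frac{n-1}{2}}$ without a local $L^2$ obstruction.
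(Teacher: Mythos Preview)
Your approach is essentially the same as the paper's: extract the oscillations via $\mathcal G_x^+$ and $\mathcal G_y^\pm$, bound the middle block in weighted $L^2$ using the limiting absorption principle (Theorem~\ref{thm:lap}), bound the end factors pointwise to extract the $\la x\ra^{-\frac{n-1}{2}}\la y\ra^{-\frac{n-1}{2}}$ decay, and integrate by parts $\tfrac{n+3}{2}$ times against $e^{i\lambda(|x|\pm|y|)}$. The paper organizes the outer iterations slightly differently---it always splits $\ell=\ell_1+\ell_2$ with $\ell_1=\lfloor n/(4m)\rfloor+1$ free resolvents iterated pointwise at each end (via Lemma~\ref{lem:IMRN lems}) and $2\ell_2+1$ resolvents handled by LAP---whereas you place a single $\mathcal G$ at each end when $n<4m$ and invoke an auxiliary block only when $n>4m$; these are equivalent since $\ell_1=1$ when $n<4m$.

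One small point worth tightening: citing \eqref{eqn:G derivs} alone does not quite give $\big\|\la\cdot\ra^{-s}\mathcal G_x^+(\lambda,\cdot)\big\|_{L^2}\les\la x\ra^{-\frac{n-1}{2}}$ when $2m<n<4m-1$, because the $|x-z|^{2m-n}$ term in that crude bound has slower spatial decay than $|x-z|^{-(n-1)/2}$. You should instead use the sharp form from Lemma~\ref{prop:F}, namely $|\mathcal G_x^+|\les |x-z|^{2m-n}$ on $\{\lambda|x-z|\le 1\}$ (a set of radius $\le 1$ since $\lambda\ge 1$) and $|\mathcal G_x^+|\les \lambda^{\frac{n+1}{2}-2m}|x-z|^{-\frac{n-1}{2}}$ on the complement; then the far-field genuinely contributes $\la x\ra^{-\frac{n-1}{2}}$. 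Also, ``derivatives on $\mathcal R_V^+$ only improve the power'' is a slight overstatement: by Theorem~\ref{thm:lap} and the chain rule, $\partial_\lambda^k\mathcal R_V^+(\lambda^{2m})$ has the \emph{same} $\lambda^{1-2m}$ bound as the undifferentiated resolvent (with larger weight exponents), not a better one---but this is harmless for your argument.
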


\begin{proof}
	
	We first establish the boundedness of the integral.  We note that for $\sigma>\f12$ and $\ell_1 =\lfloor \frac{n}{4m} \rfloor+1$ we have
	\begin{align}\label{eqn:iter res1}
	\|(V\mathcal R_0^+)^{\ell_1-1}V \mathcal R_0^\pm (\lambda^{2m})(\cdot, y)\|_{L^{2,\sigma}}  \les \frac{\lambda^{\ell_1(\frac{n+1}{2}-2m)}}{\la y\ra^{\frac{n-1}{2}}}.
	\end{align}
	This follows using the representations \eqref{eqn:resolv derivs hi} with $\ell=0$ and Lemma~\ref{lem:IMRN lems} repeatedly as in Lemma~\ref{lem:A lems}:
	After $\ell_1 =\lfloor \frac{n}{4m} \rfloor+1$ iterations of Lemma~\ref{lem:IMRN lems} in the spatial variables $z_1,z_2,\dots$, we arrive at a bound for the kernel of the operator in \eqref{eqn:iter res1}.  This bound is  dominated by $|y-z_j|^{-(\frac{n-1}{2})}$, which is locally $L^2(\R^n)$.  By Lemma~\ref{lem:GV lem}, we may bound \eqref{eqn:iter res1} by
	$$
		\lambda^{\ell_1(\frac{n+1}{2}-2m)}\| \la z_j\ra^{-\beta }|y-z_j|^{-(\frac{n-1}{2})} \|_{L^{2,\sigma}} \les \frac{\lambda^{\ell_1(\frac{n+1}{2}-2m)}}{\la y \ra^{\frac{n-1}{2}}},
	$$
	provided that $\beta> \sigma+ \frac{n}{2}$.  
	Similarly, 
	\begin{align}\label{eqn:iter res2}
	\|\mathcal  (\mathcal R_0^+V)^{\ell_1 }(x, \cdot)\|_{L^{2,\sigma}}  \les \frac{\lambda^{\ell_1(\frac{n+1}{2}-2m)}}{\la x\ra^{\frac{n-1}{2}}}.
	\end{align}
	By repeated uses of Theorem~\ref{thm:lap}, we see that
	\begin{align}\label{eqn:lap rep}
	\|  (\mathcal R_0^+ V)^{\ell_2}\mathcal R_V^+(V \mathcal R_0^+)^{\ell_2}\|_{L^{2,\sigma}\to L^{2,-\sigma}} \les \lambda^{(2\ell_2+1)(1-2m)}.
	\end{align}
	Let $\ell=\ell_1+\ell_2$, then combining \eqref{eqn:iter res1}, \eqref{eqn:iter res2} and \eqref{eqn:lap rep} we see that
	\begin{multline}\label{eqn:hi tail noibp}
	\bigg|\int_0^\infty \widetilde \chi(\lambda)\lambda^{2m-1}
	(\mathcal R_0^+(\lambda^{2m}) V)^{\ell_1 +\ell_2}  \mathcal R_V^+(\lambda^{2m})V (\mathcal R_0^+(\lambda^{2m})V)^{\ell } \mathcal R_0^\pm (\lambda^{2m})(x,y)\, d\lambda\bigg|\\
	=\int_0^\infty \widetilde \chi(\lambda)\lambda^{2m-1} \|(\mathcal R_0^+V)^{\ell_1+\ell_2 }(x, \cdot)\|_{L^{2,+\f12+}} \| (\mathcal R_0^+V)^{\ell_2 }\mathcal R_V^+(V\mathcal R_0^+)^{\ell_2}\|_{L^{2,\f12+}\to L^{2,-\f12-}} \\
	\times  \|(V\mathcal R_0^+)^{\ell_1-1} V\mathcal R_0^\pm  (\lambda^{2m})(\cdot, y)\|_{L^{2,\f12+}}\, d\lambda\\ 
	\les \frac{1}{ \la x\ra^{\frac{n-1}{2} } \la y \ra^{\frac{n-1}{2}}} \int_1^\infty \lambda^{\ell_1(n+1-4m)+(2\ell_2+1)(1-2m)} \,d\lambda 
	\les \frac{1}{ \la x\ra^{\frac{n-1}{2} } \la y \ra^{\frac{n-1}{2}}}.
	\end{multline}
	By selecting $\ell_2$ large enough, the $\lambda$ integral converges.
	To complete the proof, we use the functions $\mathcal G^\pm$   and integrate by parts $\frac{n+3}{2}$ times.   That is,
	\begin{multline*}
	\int_0^\infty \widetilde \chi(\lambda)\lambda^{2m-1}
	(\mathcal R_0^+(\lambda^{2m}) V)^{\ell +1}  \mathcal R_V^+(\lambda^{2m})V (\mathcal R_0^+(\lambda^{2m})V)^{\ell }  R_0^{\pm} (\lambda^{2m})(x,y)\, d\lambda \\
	=   \int_0^\infty e^{-i\lambda(|x|\pm |y|)}
	\widetilde \chi(\lambda)\lambda^{2m-1}  \mathcal G_x^+(\lambda, z_1)V(z_1)
	(\mathcal R_0^+(\lambda^{2m}) V)^{\ell  }  \mathcal R_V^+(\lambda^{2m})
	V (\mathcal R_0^+(\lambda^{2m})V)^{\ell }  \mathcal G^\pm_y(\lambda, z_{2\ell +1})\, d\lambda\\
	=\bigg(\frac{-1}{i(|x|\pm|y|)}\bigg)^{\frac{n+3}{2}}\int_0^\infty e^{-i\lambda(|x|\pm |y|)} \partial_\lambda^{\frac{n+3}{2}} \bigg(  \widetilde \chi(\lambda)\lambda^{3-2m}  \mathcal G^+_x(\lambda, \cdot)V(\cdot)
	(\mathcal R_0^+(\lambda^{2m}) V)^{\ell  } \\ 
	\mathcal R_V^+(\lambda^{2m})
	V (\mathcal R_0^+(\lambda^{2m})V)^{\ell }  \mathcal G^\pm_y(\lambda, \cdot)\bigg) \, d\lambda.
	\end{multline*}
	By the limiting absorption principle and the support of $\widetilde\chi(\lambda)$, there are no boundary terms when integrating by parts.  To complete the argument, let $k_j\in \mathbb N\cup\{0\}$ be such that $\sum k_j=\frac{n+3}{2}$, then the contribution will be bounded by
	\begin{multline*}
	\frac{1}{|\,|x|-|y|\,|^{\frac{n+3}{2}}}\int_0^\infty   | \widetilde \chi(\lambda)\lambda^{3-2m-k_1}   |\partial_\lambda^{k_2}\mathcal G^+_x (\lambda,\cdot)V| \,|
	\partial_\lambda^{k_3}(\mathcal R_0^+(\lambda^{2m}) V)^{\ell_1 }| \\ 
	|\partial_\lambda^{k_4}[(\mathcal R_0^+(\lambda^{2m}) V)^{\ell_2 } \mathcal R_V^+(\lambda^{2m})(V\mathcal R_0^+(\lambda^{2m}))^{\ell_2 }]|
	\,| 
	\partial_\lambda^{k_5}(V\mathcal R_0^+(\lambda^{2m}))^{\ell_1 } V| |\partial_\lambda^{k_6}\mathcal G^\pm_y (\lambda,\cdot)|\,|
	\, d\lambda.
	\end{multline*}
	Invoking the bounds in \eqref{eqn:resolv derivs hi} and an argument similar to the first case shows that we have the bound
	\begin{multline*}
	\frac{1}{|\,|x|-|y|\,|^{\frac{n+3}{2}}}\int_0^\infty    \widetilde \chi(\lambda)\lambda^{2m-1-k_1}   \||\partial_\lambda^{k_2}\mathcal G^+_x (\lambda,\cdot)V|\,|
	\partial_\lambda^{k_3}(\mathcal R_0^+(\lambda^{2m}) V)^{\ell_1 }\|_{L^{2,\f12+k_4+  }} \\
	\| \partial_\lambda^{k_4}[\mathcal R_0^+(\lambda^{2m}) V)^{\ell_2 } \mathcal R_V^+(\lambda^{2m})(V\mathcal R_0^+(\lambda^{2m}))^{\ell_2 }]\|_{L^{2,-\f{1}{2}-k_4-}\to L^{2,-\f{1}{2}-k_4-}}\\  \||
	\partial_\lambda^{k_5}((V\mathcal R_0^+(\lambda^{2m}))^{\ell_1 }V| |\partial_\lambda^{k_6}\mathcal G^\pm_y (\lambda,\cdot)|\,|\|_{L^{2,\f12+k_4+}}\, d\lambda\\
	\les \frac{1}{|\,|x|-|y|\,|^{\frac{n+3}{2}} \la x\ra^{\frac{n-1}{2} } \la y \ra^{\frac{n-1}{2}}}.
	\end{multline*}
	We note that the decay rate of $|V(z)|\les \la z\ra^{-(n+5)-}$ is necessitated when all derivatives act on $\mR_V$ to apply the limiting absorption principle, Theorem~\ref{thm:lap}.  This suffices to control the other extreme cases, when all derivatives act on a single free resolvent, then by \eqref{eqn:G derivs}, \eqref{eqn:resolv derivs hi} and Lemma~\ref{lem:GV lem} this decay rate on $V$ suffices to push forward decay in $x$ or $y$ respectively.   
	Combining this with \eqref{eqn:hi tail noibp} establishes the desired bound.  Invoking Lemma~\ref{lem:ptwise kernel} establishes the claim on $L^p$ boundedness.

\end{proof}

By integrating by parts one less time, one obtains the following which requires less decay of the potential but fails to capture the endpoints.

\begin{corollary}
	
	We have the bound 
	\begin{multline} 
	\bigg|\int_0^\infty \widetilde \chi(\lambda)\lambda^{2m-1}
	(\mathcal R_0^+(\lambda^{2m}) V)^{\ell +1}  \mathcal R_V^+(\lambda^{2m})V (\mathcal R_0^+(\lambda^{2m})V)^{\ell } \mathcal R_0^\pm (\lambda^{2m})(x,y)\, d\lambda\bigg|\\
	\les \frac{1}{\la |x|-|y|\ra^{\frac{n+1}{2}} \la x\ra^{\frac{n-1}{2} } \la y \ra^{\frac{n-1}{2}}},
	\end{multline}
	provided $\ell $ is sufficiently large and $|V(x)|\les \la x\ra^{-\beta}$ for some $\beta>n+3$.
	In particular, this kernel   extends to a bounded operator on $L^p(\R ^n)$ for all $1< p< \infty$.
	
\end{corollary}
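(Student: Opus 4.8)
The plan is to rerun the argument of Proposition~\ref{prop:high odd} almost verbatim, integrating by parts $\frac{n+1}{2}$ times in the spectral variable instead of $\frac{n+3}{2}$ times; throughout I would take $\ell=\ell_1+\ell_2$ with $\ell_1=\lfloor\frac{n}{4m}\rfloor+1$ fixed and $\ell_2$ chosen large at the end. First I would record the crude bound obtained with \emph{no} integration by parts: combining the iterated free-resolvent estimates \eqref{eqn:iter res1} and \eqref{eqn:iter res2} with the limiting absorption principle estimate \eqref{eqn:lap rep}, exactly as in \eqref{eqn:hi tail noibp}, one gets
$$
\left|\int_0^\infty \widetilde\chi(\lambda)\lambda^{2m-1}(\mathcal R_0^+(\lambda^{2m}) V)^{\ell+1}\mathcal R_V^+(\lambda^{2m}) V(\mathcal R_0^+(\lambda^{2m}) V)^{\ell}\mathcal R_0^\pm(\lambda^{2m})(x,y)\,d\lambda\right|\les \frac{1}{\la x\ra^{\frac{n-1}{2}}\la y\ra^{\frac{n-1}{2}}}.
$$
This step uses only Theorem~\ref{thm:lap} at order $k=0$ together with the integral estimates of Section~\ref{sec:int ests}, and needs only $\beta>\frac{n+1}{2}+\frac{n}{2}$, so $\beta>n+3$ is more than sufficient here.

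Next I would extract the oscillation by replacing the two outermost free resolvents with $\mathcal R_0^\pm(\lambda^{2m})(x,\cdot)=e^{\pm i\lambda|x|}\mathcal G_x^\pm(\lambda,\cdot)$ and its analogue at $y$, producing the phase $e^{-i\lambda(|x|\pm|y|)}$, and then integrate by parts $\frac{n+1}{2}$ times in $\lambda$; the support of $\widetilde\chi$ and the limiting absorption principle kill all boundary terms. This yields the prefactor $|\,|x|\pm|y|\,|^{-\frac{n+1}{2}}$ and a $\lambda$-derivative of total order $\frac{n+1}{2}$ acting on the product $\widetilde\chi(\lambda)\lambda^{2m-1}\mathcal G_x^+(\lambda,\cdot)\,(\mathcal R_0^+ V)^{\ell}\mathcal R_V^+ V(\mathcal R_0^+ V)^{\ell}\,\mathcal G_y^\pm(\lambda,\cdot)$. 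Distributing the derivatives by the Leibniz rule, with $\sum_{j=1}^6 k_j=\frac{n+1}{2}$, and estimating each factor in an appropriate weighted $L^2$ space — using \eqref{eqn:G derivs} for the $\mathcal G$'s, \eqref{eqn:resolv derivs hi} for the free-resolvent strings, and Theorem~\ref{thm:lap} at order $k_4\le\frac{n+1}{2}$ for the perturbed block — I would obtain, just as in the final display of the proof of Proposition~\ref{prop:high odd},
$$
\left|\int_0^\infty \widetilde\chi(\lambda)\lambda^{2m-1}(\mathcal R_0^+(\lambda^{2m}) V)^{\ell+1}\mathcal R_V^+(\lambda^{2m}) V(\mathcal R_0^+(\lambda^{2m}) V)^{\ell}\mathcal R_0^\pm(\lambda^{2m})(x,y)\,d\lambda\right|\les \frac{1}{|\,|x|-|y|\,|^{\frac{n+1}{2}}\la x\ra^{\frac{n-1}{2}}\la y\ra^{\frac{n-1}{2}}}.
$$
Convergence of the $\lambda$-integral after differentiation is forced by taking $\ell_2$ large: each $\lambda$-derivative costs at most one power of $\lambda$, while each of the $2\ell_2+1$ applications of the limiting absorption principle gains a factor $\lambda^{1-2m}$ with $1-2m<0$.

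Combining the two displays through $\min\bigl(1,|\,|x|-|y|\,|^{-\frac{n+1}{2}}\bigr)\les\la|x|-|y|\ra^{-\frac{n+1}{2}}$ then gives the asserted pointwise kernel bound, and an application of Lemma~\ref{lem:ptwise kernel} with $\eps=0$ yields boundedness on $L^p(\R^n)$ for $1<p<\infty$.

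The step I expect to be the main obstacle — and the one that dictates the decay hypothesis — is the extreme term in the Leibniz expansion in which all $\frac{n+1}{2}$ derivatives land on $\mathcal R_V^+$: this requires Theorem~\ref{thm:lap} at order $k=\frac{n+1}{2}$, hence weights $s,s'>\frac{n+2}{2}$ and the condition $\beta>2+2\cdot\frac{n+1}{2}=n+3$. This is precisely why $\beta>n+3$ suffices here, whereas Proposition~\ref{prop:high odd}, with one more derivative, needed $\beta>n+5$. The other extreme configurations, where all derivatives fall on one of $\mathcal G_x^\pm$, $\mathcal G_y^\pm$, or an interior free resolvent, are dealt with exactly as in Proposition~\ref{prop:high odd}, again costing only $\beta>n+3$ via \eqref{eqn:G derivs}, \eqref{eqn:resolv derivs hi} and the integral estimates of Section~\ref{sec:int ests}. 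The price of integrating by parts once fewer is that the gain in $|\,|x|-|y|\,|$ is only $\frac{n+1}{2}$, which places the kernel exactly at the borderline $\eps=0$ of Lemma~\ref{lem:ptwise kernel}; hence the endpoints $p=1,\infty$ are lost and one obtains boundedness only on $1<p<\infty$.
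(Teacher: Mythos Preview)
Your proposal is correct and is exactly the argument the paper has in mind: the paper's own justification of this corollary is the single remark that one integrates by parts one fewer time than in Proposition~\ref{prop:high odd}, which lowers the decay requirement on $V$ from $\beta>n+5$ to $\beta>n+3$ but lands at the $\epsilon=0$ borderline of Lemma~\ref{lem:ptwise kernel} and hence loses the endpoints. Your identification of the extreme Leibniz term with all $\frac{n+1}{2}$ derivatives on $\mR_V^+$ as the step dictating $\beta>n+3$ via Theorem~\ref{thm:lap} is precisely the mechanism behind the paper's claim that this ``requires less decay of the potential.''
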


\section{Even dimensions}\label{sec:even}

In this section we show how the low and high energy results for the tail of the Born series in odd dimensions proven in Sections~\ref{sec:low} and \ref{sec:high} may be applied to even dimensions.  One requires minor modifications to account for the logarithmic singularities of the resolvent.  After developing an appropriate representation of the free resolvent in Lemma~\ref{propFeven}, the arguments may be easily adapted.

First we sketch the argument for low energies. We will prove 
\begin{prop}\label{prop:low energyeven}
	Let $n>2m$ be even.
	If $|V(x)|\les \la x\ra^{-\beta}$ for some $\beta>n+3$, then the operator defined by
	$$
	-\frac{m}{\pi i} \int_0^\infty \chi(\lambda) \lambda^{2m-1}\mR_0^+(\lambda^{2m})vM^+(\lambda)^{-1}v[\mR_0^+-\mR_0^-](\lambda^{2m})\, d\lambda
	$$
	extends to a bounded operator on $L^p(\R^n)$ for all $1<p<\infty$.
\end{prop}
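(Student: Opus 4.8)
The plan is to reduce the problem to the odd-dimensional arguments of Section~\ref{sec:low}, once an appropriate representation of the even-dimensional free resolvent is available. So I would first prove a companion to Lemma~\ref{prop:F}, valid for $n>2m$ even, of the form
$$
\mR_0^+(\lambda^{2m})(y,u)=\frac{e^{i\lambda|y-u|}}{|y-u|^{n-2m}}\,F_1(\lambda|y-u|)+\lambda^{n-2m}\,F_2(\lambda|y-u|),
$$
where $F_1$ obeys exactly the bounds $|F_1^{(N)}(r)|\les\la r\ra^{\frac{n+1}{2}-2m-N}$ of Lemma~\ref{prop:F}, and $F_2$ is a logarithmic profile whose derivatives gain an $r^{-1}$ near the origin and which decays like $F_1$ up to a factor $\la\log r\ra$ at infinity (schematically $|F_2^{(N)}(r)|\les\la r\ra^{\frac{n+1}{2}-2m}\min(1,r)^{-N}\la\log r\ra$). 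This follows from the splitting identity \eqref{eqn:Resolv} together with the small- and large-argument expansions of the Hankel function $H^{(1)}_{(n-2)/2}$: the non-logarithmic part is handled exactly as in the proof of Lemma~\ref{prop:F} (the low powers of $\lambda$ cancelling after the sum over $m$-th roots of unity), while, because the order $(n-2)/2$ is a non-negative integer when $n$ is even, one additionally picks up the displayed $\lambda^{n-2m}\log(\lambda|y-u|)$-type term.

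With this in hand I would re-run the low-energy machinery of Section~\ref{sec:low} verbatim, carrying the logarithm along. The key point is that differentiating $\lambda^{n-2m}F_2(\lambda|y-u|)$ in $\lambda$ converts the $r^{-N}$ produced by the logarithm into $\lambda^{-N}$, so that by Leibniz $|\partial_\lambda^k[\lambda^{n-2m}F_2(\lambda|y-u|)]|\les\lambda^{n-2m-k}\la\log(\lambda|y-u|)\ra$, and hence the even analog of Lemma~\ref{lem:low resolv sup} is
$$
\sup_{0<\lambda<1}\big|\lambda^{k-1}\partial_\lambda^k\mR_0(\lambda^{2m})(x,y)\big|\les\la\log|x-y|\ra\Big(|x-y|^{2m+1-n}+|x-y|^{k-\frac{n-1}{2}}\Big),
$$
where one uses $n-2m\ge 2$ for $n$ even so that the extra power of $\lambda$ is never lost. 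The factor $\la\log|x-y|\ra$ is locally square integrable at the diagonal and is absorbed into the decay of $v(x)v(y)$ at infinity provided $\beta>n+3$ (exactly the $+1$ over the odd-dimensional threshold $\beta>n+2$). Consequently the operators $R_k$ of \eqref{R_k} remain bounded on $L^2$ for $1\le k\le\frac{n+1}{2}$, $T_0=M^+(0)$ is invertible by the regularity hypothesis, the Neumann-series argument gives the boundedness of the $\Gamma_N$ in \eqref{eq:MGamma} for $0\le N\le\frac{n+1}{2}$, and for $n>4m$ the even analog of Lemma~\ref{lem:A lems} holds with $\beta>n+3$, so the composed operator $\widetilde\Gamma$ in \eqref{eq:tildegamma} satisfies the same pointwise bound.

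It then remains to prove the even version of Lemma~\ref{lem:low tail low d}. Inserting the representation above for both the left resolvent and the difference $[\mR_0^+-\mR_0^-](\lambda^{2m})$ splits the kernel $K(x,y)$ into the purely ``odd-type'' contribution, disposed of word-for-word by the proof of Lemma~\ref{lem:low tail low d}, plus contributions in which one or both factors $F_1(\lambda r_i)$ are replaced by the logarithmic profile $\lambda^{n-2m}F_2(\lambda r_i)$. In the admissible pieces $K_1,K_2$ the logarithm is dominated by $\la\log r_i\ra$ and integrates against $v$ with room to spare; in $K_3,K_4$ (the pieces controlled only for $1<p<\infty$) the integration by parts in $\lambda$ near the origin proceeds as before — each $\partial_\lambda$ falling on $\lambda^{n-2m}\log\lambda$ yields a gain of a power of $\lambda$ plus a harmless $\la\log\lambda\ra$, and the cutoff $\widetilde\chi(\lambda(r_1-r_2))$ annihilates all boundary terms — so the same estimates survive up to $\la\log\ra$ factors and the operator remains bounded on $L^p$ for $1<p<\infty$.

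The main obstacle is really the first step: proving the even-dimensional resolvent representation with derivative bounds on the logarithmic part sharp enough that every $\lambda$-integral appearing in Section~\ref{sec:low} reconverges after the substitution, incurring no worse than a $\la\log\lambda\ra$ (equivalently $\lambda^{-\eps}$) loss; after that the argument is a careful but routine re-run. It is worth emphasizing that the conclusion here is only the range $1<p<\infty$, so the logarithm never has to be beaten outright — one always has the $\eps$ of room coming from $\beta>n+3$ — which is precisely why this method does not reach the endpoints $p=1,\infty$ in even dimensions.
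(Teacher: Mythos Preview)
Your strategy—reduce to the odd-dimensional analysis once the logarithm in the even resolvent is under control—is correct, and this is what the paper does. However, the paper's execution is simpler and differs from your proposal in one key respect.

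Rather than splitting off a separate logarithmic piece $\lambda^{n-2m}F_2(\lambda r)$, the paper keeps the single representation $\mR_0^+=\frac{e^{i\lambda r}}{r^{n-2m}}F(\lambda r)$ of Lemma~\ref{prop:F} and shows (Lemma~\ref{propFeven}) that the only deterioration is in high-order derivatives of $F$ near the origin: $|F^{(N)}(r)|\les\la r\ra^{\frac{n+1}{2}-2m-N}$ holds for $N\le 2m-1$ and for all $N$ when $r\gtrsim 1$, while for $r\ll 1$ one has $|F^{(2m)}(r)|\les|\log r|$ and $|F^{(N)}(r)|\les r^{2m-N}$ for $N>2m$. (Your additive form $\lambda^{n-2m}F_2(\lambda r)$ does not match the actual structure: the logarithmic part of $\mR_0^+$ is $\frac{1}{r^{n-2m}}(\lambda r)^{2m}\log(\lambda r)\cdot(\text{analytic})$, so it shares the $r^{2m-n}$ prefactor of the main term rather than carrying a pure power of $\lambda$.) The crucial observation is that everywhere in Section~\ref{sec:low} where $\partial_\lambda^\ell F(\lambda r)$ appears it comes multiplied by $\lambda^\ell$, and for $\ell\ge 2m$, $\lambda r<1$ one has $(\lambda r)^\ell\big[|\log(\lambda r)|+(\lambda r)^{2m-\ell}\big]\les 1\les\la\lambda r\ra^{\frac{n+1}{2}-2m}$. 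Thus the working bound $|\lambda^\ell\partial_\lambda^\ell F(\lambda r)|\les\la\lambda r\ra^{\frac{n+1}{2}-2m}$ is \emph{identical} to the odd case, and Lemma~\ref{lem:low resolv sup even}, the even $\widetilde\Gamma$, and the even analog~\eqref{kbound02even} of~\eqref{kbound02} all hold with no residual logarithmic factors whatsoever. Your scheme of tracking $\la\log\ra$ factors through the whole argument is unnecessary.

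Correspondingly, you misattribute the hypothesis $\beta>n+3$. A logarithm at infinity costs only an $\epsilon$ of decay, not a full unit; the real reason is parity. Since $\frac{n+1}{2}$ is not an integer when $n$ is even, one must control $\Gamma$ and $F$ through $N=\frac{n+2}{2}$ integer derivatives (and then interpolate down to the effective exponent $\frac{n+1}{2}$ in~\eqref{kbound02even}). That extra half-derivative over the odd case is what forces the extra unit of decay on $V$, and your ranges $1\le k\le\frac{n+1}{2}$ for $R_k$ and $0\le N\le\frac{n+1}{2}$ for $\Gamma_N$ should both read $\frac{n+2}{2}$.
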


We have the following representation for the even dimensional free resolvent.

\begin{lemma}\label{propFeven}
	
	Let $n>2m$ be even. Then,   we have the following representation of the free resolvent
	$$
		\mR_0^+(\lambda^{2m})(y,u)
	=  \frac{e^{i\lambda|y-u|}}{|y-u|^{n-2m}} F(\lambda |y-u| ).$$
	Here   $|F^{(N)}(r)|\les  \la r\ra^{\frac{n+1}2 -2m-N}$, $N=0,1,2,...,2m-1$, and is valid for any $N$ when $r\gtrsim 1$, while when $r\ll 1$ we have $|F^{(2m)}(r)|\les \log(r)$ and $|F^{(N)}(r)|\les r^{2m-N}$ for $N>2m$.
	
\end{lemma}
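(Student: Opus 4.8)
The plan is to follow the proof of Lemma~\ref{prop:F} almost verbatim, replacing the closed-form odd-dimensional second-order resolvent by its even-dimensional analogue, which carries a Hankel function $H^{(1)}_{(n-2)/2}$ of \emph{integer} order rather than a polynomial times $e^{i\lambda|y-u|}$. First I would record two facts about the second-order kernel. By homogeneity one may write $R_0^+(z^2)(x,y)=|x-y|^{2-n}\rho(z|x-y|)$ for a scalar radial profile $\rho$, and:
\begin{enumerate}[(i)]
\item for argument $\gtrsim 1$, the asymptotic expansion of $H^{(1)}_{(n-2)/2}$ gives $\rho(s)=e^{is}\sigma(s)$ with $|\sigma^{(N)}(s)|\les\la s\ra^{\frac{n-3}2-N}$ for every $N\ge 0$;
\item for argument $\ll 1$, the classical expansion (see \cite{Jen}, and \cite{soffernew} for the higher order version) gives
\[
\rho(s)=\sum_{k=0}^{\frac{n-4}2}c_k s^{2k}+s^{n-2}\big(b(s^2)+a(s^2)\log s\big),
\]
with $a,b$ entire --- the essential structural point being that the logarithm enters only at order $s^{n-2}$, multiplied by a power series in $s^2$.
\end{enumerate}

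Next, using \eqref{eqn:Resolv} and homogeneity, I would identify $F$ explicitly: with $r=\lambda|y-u|$ and the branch $\omega_\ell^{1/2}=e^{i\pi\ell/m}$ (so $\mathrm{Im}\,\omega_\ell^{1/2}=\sin(\pi\ell/m)>0$ for $\ell=1,\dots,m-1$),
\[
F(r)=e^{-ir}|y-u|^{n-2m}\mR_0^+(\lambda^{2m})(y,u)=\frac{e^{-ir}}{m\,r^{2m-2}}\sum_{\ell=0}^{m-1}\omega_\ell\,\rho\big(\omega_\ell^{1/2}r\big)=:\frac{g(r)}{r^{2m-2}}.
\]
For $r\gtrsim 1$ the summands with $\ell\ge 1$ equal $e^{i\omega_\ell^{1/2}r}\sigma(\omega_\ell^{1/2}r)$, hence are $O(e^{-cr})$ together with all their $r$-derivatives, while the $\ell=0$ term is exactly $\tfrac1m\sigma(r)$; this yields $|g^{(N)}(r)|\les\la r\ra^{\frac{n-3}2-N}$ and therefore $|F^{(N)}(r)|\les\la r\ra^{\frac{n+1}2-2m-N}$ for every $N$ once $r\gtrsim 1$, exactly as in the odd case. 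For $r\ll 1$ I would insert (ii) into each summand, write $\log(\omega_\ell^{1/2}r)=\log r+i\pi\ell/m$, and collapse the $\ell$-sum using $\sum_{\ell=0}^{m-1}\omega_\ell^{j}=m\,\mathbf{1}[m\mid j]$. This shows that the polynomial part of $g$ is $c_{m-1}r^{2m-2}(1+O(r^2))+\cdots$, since only the exponents $\equiv -1\pmod m$ survive and the smallest of these is $2m-2$, so $g$ has a zero of order $2m-2$ at the origin; and that the coefficient of $\log r$ --- which in each summand sits at $r^{n-2}$ with a prefactor proportional to $\omega_\ell^{n/2}$ --- survives only at exponents $n-2+2j$ with $m\mid(n/2+j)$, so the first logarithmic term of $g$ appears at order $r^{2(km-1)}$ with $k=\lceil\tfrac{n}{2m}\rceil\ge 2$, i.e.\ at order $\ge r^{4m-2}$. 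Dividing by $r^{2m-2}$ gives $F(r)=c_{m-1}e^{-ir}+(\text{smooth},\ O(r^2))+(\text{a logarithmic part of order}\ \ge r^{2m}\log r)$; reading off derivatives, and using that differentiating $r^{2m}\log r$ more than $2m$ times produces only pure negative powers of $r$ (no further logarithm), yields $|F^{(N)}(r)|\les 1$ for $0\le N\le 2m-1$, $|F^{(2m)}(r)|\les|\log r|$, and $|F^{(N)}(r)|\les r^{2m-N}$ for $N>2m$. Combining the two regimes gives the stated global bound for $0\le N\le 2m-1$.

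The main obstacle is the small-$r$ step, and in particular verifying that the logarithm in the combined higher-order resolvent is pushed all the way up to order $r^{2m}$ in $F$: a logarithm at a lower order would ruin the bound $|F^{(2m)}(r)|\les|\log r|$ and would propagate to the even-dimensional low-energy estimates of Section~\ref{sec:even}. This is exactly where the cancellation among the $m$-th roots of unity is indispensable --- the prefactor $\omega_\ell^{n/2}$ in front of the logarithmic term of each summand --- and the same bookkeeping applied to the polynomial part is what guarantees that the denominator $r^{2m-2}$ is matched by a zero of $g$ of that order. The large-$r$ step is routine given the Hankel asymptotics, precisely as in Lemma~\ref{prop:F}.
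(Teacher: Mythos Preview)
Your proposal is correct and follows essentially the same approach as the paper: both use the Hankel-function series for the second-order resolvent at small argument (with the logarithm entering at order $s^{n-2}$), feed this into the splitting identity \eqref{eqn:Resolv}, and then invoke the roots-of-unity cancellation $\sum_{\ell}\omega_\ell^{j+1}=m\,\mathbf{1}[m\mid j+1]$ to show that $g$ vanishes to order $2m-2$ at the origin and that the first surviving logarithm in $F$ sits at order $\ge r^{2m}$; the large-$r$ regime is handled identically via the standard Hankel asymptotics and the exponential decay of the $\ell\ge 1$ summands. Your tracking of the logarithmic exponent (first appearance at $r^{2km-2}$ in $g$ with $k=\lceil n/(2m)\rceil\ge 2$) is slightly sharper than what the paper records, but the conclusions and the mechanism are the same.
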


\begin{proof}
	
	To prove this we consider cases when $\lambda |y-u|\ll1$ and $\lambda|y-u|\gtrsim 1$.  We consider first the second-order Schr\"odinger resolvent, which may be expressed in terms of the Bessel functions
	$$
	R_0^+(\lambda^{2})(y,u) =\frac{i}{4}\bigg(\frac{\lambda}{2\pi |y-u|}\bigg)^{\frac{n-2}{2}} H^{(1)}_{\frac{n-2}{2}}(\lambda |y-u|).
	$$
	Unlike in odd dimensions, we do not have a closed form representation for the Hankel function of the first kind $H^{(1)}_{\frac{n-2}{2}}(\cdot)$.  Following the approach in \cite{GG2}, see also \cite{Jen},  for $\lambda |y-u|\ll1$, we have a series of the form
	$$
	R_0^+(\lambda^{2})(y,u)=\frac{1}{|y-u|^{n-2}} \sum_{j=0}^\infty \sum_{k=0}^1   c_j (\lambda |y-u|)^{2j} (a_{ j} \log (\lambda|y-u|)+b_{ j} )^k.
	$$
	The constants $a_j,b_j,c_j$ may be computed explicitly.  Of particular importance is that $a_j=0$ for $j\leq \frac{n}{2}-2$.
	Combining this with the splitting identity \eqref{eqn:Resolv}, we have
	\begin{multline*}
	\mR_0^+(\lambda^{2m})(y,u)=\frac{1}{m|y-u|^{n-2}\lambda^{2m-2}}  \sum_{j=0}^\infty \sum_{k=0}^1 \sum_{\ell=0}^{m-1}  c_j \omega_\ell^{j+1} (\lambda |y-u|)^{2j} (a_{ j} \log (\lambda|y-u|)+a_j\log(\omega_\ell) +b_{ j} )^k.
	\end{multline*}
	Using the fact that
	$$
	\sum_{\ell=0}^{m-1} \omega_\ell^{j+1}\neq 0 \quad \text{ if and only if } \quad j=km-1, \quad k=1,2,\dots,
	$$
	we may write (for $\lambda |y-u|\ll 1$)
	$$
	\mR_0^+(\lambda^{2m})(y,u)=\frac{1}{|y-u|^{n-2m}}\bigg(\sum_{j=0}^{m-1} d_j (\lambda |y-u|)^{2j} + \sum_{j=m}^{\infty} d_j (\lambda |y-u|)^{2j} (1+d_{j,l} \log (\lambda |y-u|))  \bigg)
	$$
	In particular,
	the first logarithm occurs at the term $\lambda^{2m}$.  The claim on $F$ for $r\ll 1$ follows since we may write, for any choice of $N$
	$$
		F(r)=e^{-ir}\bigg(\sum_{j=0}^{m-1} d_j r^{2j} + \sum_{j=m}^{N} d_j (r^{2j} (1+d_{j,l} \log (r))  \bigg)+O(r^{N-})
	$$
	where the remainder may be differentiated arbitrarily many times.

	The large argument expansion of the resolvent is the same from the Bessel functions, one has for $\lambda |y-u|\gtrsim 1$ that
	$$
		\lambda^{2-2m}R_0^+ (\lambda^2)(y,u)=e^{i\lambda|y-u|} \frac{(\lambda|y-u|)^{\frac{n+2}{2}-2m}}{|y-u|^{n-2m}} \omega_+(\lambda |y-u|),
	$$
	where $|\partial_r^k\omega_+(r)|\les r^{-\f12-k}$.  The splitting identity \eqref{eqn:Resolv} along with the exponential decay of the other resolvents suffices to establish the claim.
	
\end{proof}

\begin{lemma}\label{lem:low resolv sup even}
	Let $n>2m $ be even. 
	We have the following bounds on the derivatives of the resolvent.  For $k=1,2,\dots,$ we have
	$$
	\sup_{0<\lambda <1} |\lambda^{k-1}\partial_\lambda^k \mR_0(\lambda^{2m})(x,y)| \les 
	|x-y|^{2m+1-n}+|x-y|^{k-(\frac{n-1}{2})}.
	$$
\end{lemma}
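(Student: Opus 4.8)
The plan is to follow the proof of Lemma~\ref{lem:low resolv sup}, substituting the even-dimensional representation of Lemma~\ref{propFeven} for Lemma~\ref{prop:F}. Writing $r=\lambda|x-y|$ and applying the product and chain rules exactly as there gives
\[
\big|\partial_\lambda^k \mR_0(\lambda^{2m})(x,y)\big|\les |x-y|^{2m-n+k}\sum_{\ell=0}^k \big|F^{(\ell)}(r)\big|.
\]
As in the odd case we split using the cut-offs $\widetilde\chi(\lambda|x-y|)$ and $\chi(\lambda|x-y|)$. For $r\gtrsim 1$ Lemma~\ref{propFeven} gives $|F^{(\ell)}(r)|\les r^{\frac{n+1}2-2m-\ell}$ for every $\ell$, so the sum is controlled by its $\ell=0$ term; bounding $\lambda^{k-1}|x-y|^{2m-n+k}\la r\ra^{\frac{n+1}2-2m}$ by distinguishing the two signs of the exponent $k-1-2m+\frac{n+1}2$ is then word-for-word the argument in Lemma~\ref{lem:low resolv sup} and produces the bound $|x-y|^{2m+1-n}+|x-y|^{k-\frac{n-1}2}$.

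The only new ingredient is the regime $r\ll1$, where Lemma~\ref{propFeven} yields $|F^{(\ell)}(r)|\les1$ for $\ell\le 2m-1$, $|F^{(2m)}(r)|\les|\log r|$, and $|F^{(\ell)}(r)|\les r^{2m-\ell}$ for $\ell>2m$. Thus for $r\le1$ one has $\sum_{\ell=0}^k|F^{(\ell)}(r)|\les 1$ when $k\le 2m-1$, $\les |\log r|$ when $k=2m$, and $\les r^{2m-k}$ when $k\ge 2m+1$, the logarithm and the intermediate powers $r^{2m-\ell}$ ($2m<\ell<k$) all being dominated by $r^{2m-k}$. Multiplying by $\lambda^{k-1}|x-y|^{2m-n+k}$ and collecting, $\lambda^{k-1}|\partial_\lambda^k\mR_0(\lambda^{2m})(x,y)|$ is bounded by $(\lambda|x-y|)^{k-1}|x-y|^{2m+1-n}$ when $k\le2m-1$, by $(\lambda|x-y|)^{2m-1}|\log(\lambda|x-y|)|\,|x-y|^{2m+1-n}$ when $k=2m$, and by $(\lambda|x-y|)^{2m-1}|x-y|^{2m+1-n}$ when $k\ge 2m+1$; the key point is that the worst derivative of $F$ supplies the factor $\lambda^{2m-k}$, which multiplied by $\lambda^{k-1}$ always leaves the positive power $\lambda^{2m-1}$ (using $2m-1\ge1$, valid since $m>1$). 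Since $r=\lambda|x-y|\le 1$ and the maps $s\mapsto s^{2m-1}$ and $s\mapsto s^{2m-1}|\log s|$ are bounded on $(0,1]$, each of these expressions is $\les |x-y|^{2m+1-n}$, which together with the $r\gtrsim1$ estimate proves the lemma.

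The main (and essentially only) obstacle is organizational: one must keep track of the negative powers of $\lambda|x-y|$ produced by the high-order derivatives of $F$ near the origin in even dimensions and check that they never overwhelm the compensating factor $\lambda^{k-1}$, and that the single borderline logarithm at the $(2m)$-th derivative is absorbed. Both are immediate from $2m-1>0$; no decay on $V$ beyond that required by Lemma~\ref{lem:low resolv sup} is needed, and---as announced in Section~\ref{sec:even}---with this replacement the low-energy arguments of Sections~\ref{sec:low} and \ref{sec:low2} and the high-energy argument of Section~\ref{sec:high} carry over to even $n$ with only cosmetic modifications.
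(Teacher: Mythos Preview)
Your proof is correct and follows essentially the same approach as the paper's: reduce to the odd-dimensional argument when $r\gtrsim 1$ (or $k\le 2m-1$), and for $r\ll1$ with $k\ge 2m$ bound $\sum_{\ell\le k}|F^{(\ell)}(r)|$ by $1+|\log r|+r^{2m-k}$ and observe that the prefactor $\lambda^{k-1}$ always leaves a positive net power of $r=\lambda|x-y|$, absorbing the logarithm. The paper compresses the case split slightly by folding $k\le 2m-1$ into the odd-dimensional proof, but the substance is identical.
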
 
\begin{proof}
Since $|\partial_r^k F(r)|$ satisfies the same bounds as in the odd case for $k\leq 2m-1$ and for $r>1$, we can assume that $k\geq 2m$ and $\lambda|x-y|<1$.  We have
\begin{multline*}
\lambda^{k-1}|\partial_\lambda^k \mR_0(\lambda^{2m})(x,y)|\les \lambda^{k-1}|x-y |^{k+2m-n} \sum_{\ell=0}^k |F^{(\ell)}(\lambda |x-y|)|\\
\les\lambda^{k-1}|x-y|^{k+2m-n} \big[1+|\log(\lambda |x-y|)|+(\lambda |x-y|)^{2m-k}\big]
\\\les |x-y|^{1+2m-n} [\lambda|x-y|]^k \big[(\lambda|x-y|)^{0-}+(\lambda |x-y|)^{2m-k}\big] \les  |x-y|^{1+2m-n}.
\end{multline*}
\end{proof}
With this the invertibility of $M(\lambda)$ and the bounds on its derivatives follow by similar arguments to the odd dimensional case, namely 
\be\label{eq:MGammaeven}
\Gamma_N(x,y):=\sup_{0<\lambda<\lambda_0}\lambda^N | \partial_\lambda^N[M^+(\lambda)]^{-1}(x,y)|
\ee
is bounded in $L^2$ for $N=0,1,\ldots,\frac{n+2}2$, provided that $\beta >n+3$.  This will suffice for $n<4m$ even. For $n\geq 4m$ even, we iterate the Born series and note that $A(\lambda, z_1,z_2)$ defined via \eqref{eq:Alambda} satisfies a slightly modified  version  of the claim of Lemma~\ref{lem:A lems}:   
$$	\sup_{0<\lambda <1}|\lambda^\ell \partial_\lambda^\ell 	A(\lambda, z_1,z_2)| \les \la z_1 \ra^{\frac32}  \la z_2\ra^{\frac32},
$$
	for $0\leq \ell\leq \frac{n+2}{2}$. The inclusion of $\lambda^\ell$ power takes care of the singularity arising from the logarithm in Lemma~\ref{propFeven} as in Lemma~\ref{lem:low resolv sup even}. Therefore letting  
$$\Gamma=w A(\lambda)v M^{-1}(\lambda)vA(\lambda) w,$$
as above, we see that  
\be\label{eq:tildegammaeven}
\widetilde \Gamma (x,y):= \sup_{0<\lambda <\lambda_0} \sup_{0\leq k\leq  \frac{n+2}2} \big|\lambda^k \partial_\lambda^k \Gamma(\lambda)(x,y)\big| \les \la x\ra^{-\frac{n}2-}\la y\ra^{-\frac{n}2-},
\ee
provided that $\beta>n+3$.  The following variant of Lemma~\ref{lem:low tail low d} finishes the proof:  
\begin{lemma}\label{lem:low tail low d even} Fix $n>2m$ even and let $\Gamma$ be a $\lambda $ dependent absolutely bounded operator. Assume that   
$$
\widetilde \Gamma (x,y):= \sup_{0<\lambda <\lambda_0} \sup_{0\leq k\leq  \frac{n+2}2} \big|\lambda^k \partial_\lambda^k \Gamma(\lambda)(x,y)\big| 
$$
  is bounded on $L^2$ for $2m<n<4m$ and satisfies \eqref{eq:tildegammaeven} for $n\geq 4m$. 
   Then the operator with kernel 
	$$
	K(x,y)=\int_0^\infty \chi(\lambda) \lambda^{2m-1} [\mR_0^+(\lambda^{2m}) v \Gamma v \mR_0^-(\lambda^{2m})](x,y)   d\lambda 
	$$
	is bounded on $L^p$ for $1<p<\infty$ provided that $\beta>n$.
\end{lemma}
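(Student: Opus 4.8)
The plan is to run the proof of Lemma~\ref{lem:low tail low d} essentially verbatim, with Lemma~\ref{propFeven} playing the role of Lemma~\ref{prop:F}. Writing $r_1=|x-z_1|$, $r_2=|y-z_2|$, one has as in \eqref{Kdefini}
$$
K(x,y)=\int_{\R^{2n}}\frac{v(z_1)v(z_2)}{r_1^{n-2m}r_2^{n-2m}}\int_0^\infty e^{i\lambda(r_1-r_2)}\chi(\lambda)\lambda^{2m-1}\Gamma(\lambda)(z_1,z_2)F(\lambda r_1)F(\lambda r_2)\,d\lambda\,dz_1\,dz_2,
$$
and one decomposes $K=K_1+K_2+K_3+K_4$ according to the four regions $r_1,r_2\les1$; $r_1\approx r_2\gg1$; $r_2\gg\la r_1\ra$; and $r_1\gg\la r_2\ra$, exactly as before. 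As in the odd case, $K_1$ and $K_2$ will be admissible, and $K_3,K_4$ bounded on $L^p$, $1<p<\infty$, by Hölder's inequality once their kernels obey the same pointwise bounds as in the proof of Lemma~\ref{lem:low tail low d}; for $2m<n<4m$ one uses the $L^2$-boundedness of $\widetilde\Gamma$, and for $n\ge4m$ the decay \eqref{eq:tildegammaeven}, with the two ranges separated exactly as there.

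There are two adjustments. First, by Lemma~\ref{propFeven} the estimate $|F^{(N)}(r)|\les\la r\ra^{\frac{n+1}{2}-2m-N}$ available for all $N$ in odd dimensions holds here only for $r\gtrsim1$, or for $N\le2m-1$; for $r\ll1$ one has instead $|F^{(2m)}(r)|\les|\log r|$ and $|F^{(N)}(r)|\les r^{2m-N}$ for $N>2m$. Nevertheless the quantity actually used in the proof of Lemma~\ref{lem:low tail low d}, namely the bound on $\partial_\lambda^\ell[F(\lambda r)]$ after rescaling, is unchanged: from Lemma~\ref{propFeven} one checks that in every case $|\partial_\lambda^\ell[F(\lambda r)]|=r^\ell|F^{(\ell)}(\lambda r)|\les\la\lambda r\ra^{\frac{n+1}{2}-2m}\lambda^{-\ell}$ — for $\ell>2m$ with $\lambda r\ll1$ because $r^\ell(\lambda r)^{2m-\ell}=\lambda^{-\ell}(\lambda r)^{2m}\les\lambda^{-\ell}$, and for $\ell=2m$ with $\lambda r\ll1$ because $r^{2m}|\log(\lambda r)|=\lambda^{-2m}(\lambda r)^{2m}|\log(\lambda r)|\les\lambda^{-2m}$ since $s\mapsto s^{2m}|\log s|$ is bounded on $(0,1)$ for $m\ge1$. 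Consequently the derivative-counting bound \eqref{kbound02} — whose proof balances the factor $\lambda^{-N}$ coming from differentiating $\lambda^{2m-1}$, $\Gamma(\lambda)$ and the two $F$-factors against the factor $\lambda^N$ produced by $|r_1-r_2|^{-N}$ on the set $\lambda|r_1-r_2|>1$ — continues to hold. Second, since $\tfrac{n+1}{2}$ is not an integer when $n$ is even, one integrates by parts $N=\tfrac{n+2}{2}$ times instead of $\tfrac{n+1}{2}$; the hypothesis on $\widetilde\Gamma$ is correspondingly strengthened to allow $\tfrac{n+2}{2}$ derivatives of $\Gamma$, and this value of $N$ is exactly what is needed so that the $\lambda$-integrals in the estimates for $K_3$ and $K_4$ reproduce the decay rates $\la r_i\ra^{-n}$ (up to logarithms) obtained in the odd case (the intermediate choice $N=\tfrac n2$ would not suffice for $p$ near $\infty$). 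With these two adjustments the four regional estimates go through line by line as in Lemma~\ref{lem:low tail low d}, using \eqref{eq:tildegammaeven} for $n\ge4m$ exactly as \eqref{eq:tildegamma} was used there.

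The main obstacle is not conceptual but a matter of careful bookkeeping: one must verify that the per-factor bound $|\partial_\lambda^\ell[F(\lambda r)]|\les\la\lambda r\ra^{\frac{n+1}{2}-2m}\lambda^{-\ell}$ genuinely survives passage through the logarithmic and power-type singularities of the even resolvent, and that $N=\tfrac{n+2}{2}$ integrations by parts — which is precisely what the strengthened hypothesis supports — suffice in each of the four regions and in each of the ranges $2m<n<4m$ and $n\ge4m$ to recover the odd-dimensional decay rates. Once this accounting is completed, the estimates coincide with those in the proof of Lemma~\ref{lem:low tail low d}, and the admissibility arguments for $K_1,K_2$ together with the Hölder arguments for $K_3,K_4$ carry over unchanged.
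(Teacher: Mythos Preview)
Your proposal is correct and follows essentially the same approach as the paper: verify that $|\lambda^\ell\partial_\lambda^\ell F(\lambda r)|\les\la\lambda r\ra^{\frac{n+1}{2}-2m}$ survives the even-dimensional singularities of $F$, integrate by parts up to $\tfrac{n+2}{2}$ times to recover the analogue of \eqref{kbound02}, and repeat the $K_1,\dots,K_4$ analysis of Lemma~\ref{lem:low tail low d}. The only cosmetic difference is that the paper, having the bound \eqref{kbound02even} for all $N\le\tfrac{n+2}{2}$ (not necessarily integer), then chooses $N=\tfrac{n+1}{2}$ for $K_3,K_4$ so the remaining computations are literally identical to the odd case, whereas you use $N=\tfrac{n+2}{2}$ directly; either choice works.
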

\begin{proof} Writing 
$$
K(x,y)=\int_{\R^{2n}} \frac{v(z_1)v(z_2)}{r_1^{n-2m}r_2^{n-2m}}\int_0^\infty  e^{i\lambda(r_1-r_2)}\chi(\lambda)\lambda^{2m-1}\Gamma(\lambda) F(\lambda r_1)F(\lambda r_2) d\lambda dz_1dz_2,
$$
we see that the $\lambda $ integral satisfies the bound \eqref{kbound0}:
$$
	 \widetilde\Gamma (z_1,z_2)    \int_0^1       \frac{\lambda^{2m-1}   }{ \la \lambda r_1\ra^{2m-\frac{n+1}2} \la \lambda r_2\ra^{2m-\frac{n+1}2} } d\lambda.
$$
We will use this for $\lambda |r_1-r_2|<1$ and integrate  by parts $N\leq \frac{n+2}{2}$ times otherwise. Note that by Lemma~\ref{propFeven}, when $\lambda r >1$ or when $\ell\leq 2m-1$, we have  $|\lambda^\ell \partial_\lambda^\ell F(\lambda r )| \les \la r \ra^{\frac{n+1}2 -2m}$. When $\lambda r <1$ and  $\ell\geq  2m $, we once again have 
$$|\lambda^\ell \partial_\lambda^\ell F(\lambda r )| \les    (\lambda r_1)^{\ell} \big(|\log(\lambda r )|+(\lambda r )^{2m-\ell}\big)\les 1\les \la \lambda r \ra^{\frac{n+1}2 -2m}.$$
Therefore, we obtain the following bound essentially identical to \eqref{kbound02}:
	\be 
	\label{kbound02even}
	|K(x,y)|\les \int_{\R^{2n}} \frac{v(z_1)  \widetilde\Gamma (z_1,z_2) v(z_2)}{  r_1^{n-2m} r_2^{n-2m}} \int_0^1 \frac{ \lambda^{2m-1}    } { \la \lambda (r_1-r_2)\ra^{N}   \la \lambda r_1\ra^{2m-\frac{n+1}2}  \la \lambda r_2\ra^{2m-\frac{n+1}2} } d\lambda  dz_1 dz_2,
	\ee
for all $0\leq N\leq \frac{n+2}2$, noting $N$ need not be an integer. The rest of the proof is identical to the proof of Lemma~\ref{lem:low tail low d} using \eqref{kbound02even} with $N=0$ for $K_1$ with $N=2$ for $K_2$ and $N=\frac{n+1}2$ for $K_3$ and $K_4$.
\end{proof}

\begin{prop}\label{prop:hi even}
	
	We have the bound 
	\begin{multline*} 
	\bigg|\int_0^\infty \widetilde \chi(\lambda)\lambda^{2m-1}
	(\mathcal R_0^+(\lambda^{2m}) V)^{\ell +1}  \mathcal R_V^+(\lambda^{2m})V (\mathcal R_0^+(\lambda^{2m})V)^{\ell } \mathcal R_0^\pm (\lambda^{2m})(x,y)\, d\lambda\bigg|\\
	\les \frac{1}{\la |x|-|y|\ra^{\frac{n+2}{2}} \la x\ra^{\frac{n-1}{2} } \la y \ra^{\frac{n-1}{2}}},
	\end{multline*}
	provided $\ell $ is sufficiently large, and $|V(x)|\les \la x\ra^{-\beta}$ for some $\beta>n+4$.
	In particular, this kernel is admissible and hence the tail extends to a bounded operator on $L^p(\R ^n)$ for all $1\leq p\leq \infty$.
	
\end{prop}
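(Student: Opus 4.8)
The plan is to mirror the proof of Proposition~\ref{prop:high odd} almost verbatim, the only structural changes being that one uses the even-dimensional resolvent representation of Lemma~\ref{propFeven} in place of Lemma~\ref{prop:F}, and that one integrates by parts $\frac{n+2}{2}$ times in $\lambda$ rather than $\frac{n+3}{2}$ times. Since $n$ is even, $\frac{n+2}{2}$ is an integer, and it is the right count: writing $\frac{n+2}{2}=\frac{n+1}{2}+\frac12$, the surplus $\eps=\frac12>0$ in the exponent of $\la |x|-|y|\ra$ is exactly what Lemma~\ref{lem:ptwise kernel} demands in order to reach $p=1,\infty$, while by Theorem~\ref{thm:lap} pushing $\frac{n+2}{2}$ $\lambda$-derivatives through $\mR_V^+$ costs precisely $\beta>2+2\cdot\frac{n+2}{2}=n+4$. (Only $\frac n2$ integrations by parts would give $\eps=-\frac12$ and fail the endpoints, whereas more would require more decay than $\beta>n+4$.)

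The first step is to record the even-dimensional high-energy resolvent bounds, namely the analogues of \eqref{eqn:G derivs} and \eqref{eqn:resolv derivs hi}. On the support of $\widetilde\chi$ one has $\lambda\gtrsim1$; if $|x-z|\gtrsim1$ then $\lambda|x-z|\gtrsim1$ and Lemma~\ref{propFeven} provides $|F^{(N)}(\lambda|x-z|)|\les\la\lambda|x-z|\ra^{\frac{n+1}2-2m-N}$ for every $N$, exactly as in odd dimensions; the same bound applies whenever $\lambda|x-z|\gtrsim1$. The only genuinely new regime is $|x-z|\ll1$ with $\lambda|x-z|\ll1$, where $F^{(2m)}$ carries a logarithm and $F^{(N)}$ with $N>2m$ grows like $(\lambda|x-z|)^{2m-N}$. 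But each $\lambda$-derivative of $F(\lambda|x-z|)$ produces a compensating factor $|x-z|$, and in this regime $|x-z|^{j}F^{(j)}(\lambda|x-z|)\les1$ for every $j$: the logarithm at $j=2m$ and the growth at $j>2m$ are more than absorbed by the $j$-th power of $|x-z|\ll1$ together with $\lambda\gtrsim1$. This is precisely the observation already exploited in Lemma~\ref{lem:low resolv sup even}. Consequently the even-dimensional kernels $\mathcal G_x^\pm$ and $\mR_0^\pm$ satisfy the bounds \eqref{eqn:G derivs} and \eqref{eqn:resolv derivs hi} in the high-energy range, with at most an innocuous $\la z\ra^{0+}$ factor; near the diagonal the even bounds are in fact strictly better, each derivative beyond the $2m$-th buying a full power of $|x-z|$.

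With these bounds in hand the argument of Proposition~\ref{prop:high odd} goes through unchanged. First, without integrating by parts, split $\ell=\ell_1+\ell_2$ with $\ell_1=\lfloor\frac n{4m}\rfloor+1$; iterating the free-resolvent bound via Lemma~\ref{lem:IMRN lems} and Lemma~\ref{lem:GV lem} smooths the singularities down to a locally $L^2$ kernel $|x-z_j|^{-\frac{n-1}2}$ (the analogues of \eqref{eqn:iter res1} and \eqref{eqn:iter res2}), while Theorem~\ref{thm:lap} controls the middle block $(\mR_0^+V)^{\ell_2}\mR_V^+(V\mR_0^+)^{\ell_2}$; choosing $\ell_2$ large (where $m>1$ enters) makes the remaining integral $\int_1^\infty\lambda^{\cdots+(2\ell_2+1)(1-2m)}\,d\lambda$ converge, giving the bound $\la x\ra^{-\frac{n-1}2}\la y\ra^{-\frac{n-1}2}$. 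Next, factor out the oscillation $e^{-i\lambda(|x|\pm|y|)}$ through $\mathcal G_x^+$ and $\mathcal G_y^\pm$ and integrate by parts $\frac{n+2}2$ times in $\lambda$; there are no boundary terms, by the support of $\widetilde\chi$ together with the limiting absorption principle. Distributing the $\frac{n+2}2$ derivatives by the Leibniz rule and bounding each resulting term with the estimates of the previous paragraph and the limiting absorption bounds of Theorem~\ref{thm:lap} (so $\|\mR_V^{(k)}(\lambda^{2m})\|_{L^{2,s}\to L^{2,-s'}}\les\lambda^{(1-2m)(1+k)}$ for $s,s'>k+\frac12$) yields $\la |x|-|y|\ra^{-\frac{n+2}2}\la x\ra^{-\frac{n-1}2}\la y\ra^{-\frac{n-1}2}$: the term in which all derivatives land on $\mR_V^+$ is the one forcing $\beta>n+4$, whereas the opposite extreme (all derivatives on one free resolvent or on a $\mathcal G$ factor) only needs $V$ to absorb the emerging $\la z\ra$-powers and push $\la x\ra^{-\frac{n-1}2}$ or $\la y\ra^{-\frac{n-1}2}$ forward via Lemma~\ref{lem:GV lem}, a weaker decay requirement. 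Finally, a routine polar-coordinates computation shows this kernel is admissible, and Lemma~\ref{lem:ptwise kernel} with $\eps=\frac12$ gives boundedness on $L^p(\R^n)$ for all $1\le p\le\infty$.

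The main obstacle — essentially the only non-bookkeeping point — is the verification in the second paragraph that the logarithmic singularity of the even-dimensional resolvent does not spoil the high-energy integration-by-parts scheme; once one notices that the logarithm appears only where $\lambda|x-z|\ll1$, and that there each $\lambda$-derivative supplies a compensating power of $|x-z|\ll1$, every estimate reduces to its odd-dimensional counterpart and the remainder of the proof is a transcription of Proposition~\ref{prop:high odd} with $\frac{n+2}2$ in place of $\frac{n+3}2$.
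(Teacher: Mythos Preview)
Your proposal is correct and follows essentially the same approach as the paper: the paper's own proof is a two-sentence remark that the argument of Proposition~\ref{prop:high odd} goes through verbatim once one knows (via Lemma~\ref{propFeven}) that the bounds \eqref{eqn:G derivs} and \eqref{eqn:resolv derivs hi} hold in even dimensions, integrating by parts $\tfrac{n+2}{2}$ times and invoking Lemma~\ref{lem:ptwise kernel}. Your write-up supplies more detail than the paper does---in particular the explicit check that the logarithmic behavior of $F^{(N)}$ for $N\ge 2m$ and $\lambda|x-z|\ll 1$ is harmless in the high-energy regime because each $\lambda$-derivative contributes a compensating factor of $|x-z|$---but the structure and all key inputs are identical.
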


This proof is essentially identical to the proof of Proposition~\ref{prop:high odd} in the odd dimensional case.  Here, by Lemma~\ref{propFeven}, the bounds \eqref{eqn:G derivs} and \eqref{eqn:resolv derivs hi} hold, hence the proposition follows by integrating by parts $\frac{n+2}{2}$ times to invoke Lemma~\ref{lem:ptwise kernel}.

\section{Integral estimates and Proofs of Technical Lemmas}\label{sec:int ests}

We now present the proofs of some technical lemmas that are used throughout the paper.  For completeness we provide a proof of Lemma~\ref{lem:ptwise kernel}.

\begin{proof}[Proof of Lemma~\ref{lem:ptwise kernel}]	
	We first consider the case when $\epsilon = 0$, we decompose the integral into three regions according to whether $|x| > 2|y|$, $|x| <  \frac12 |y|$, or $\frac12|y| \leq |x| \leq 2|y|$.  In the region where $|x|\approx |y|$,  switching to polar coordinates we see that
	$$
	\int_{|x| \approx |y|}|K(x,y)|dx \les  \frac{1}{\la y\ra^{n-1}}\int_{ |y|/2}^{2|y|} \frac{r^2}{\la r-|y|\ra^{\frac{n+1}{2}}}\, dr \les \frac{|y|^{n-1}}{\la y\ra^{n-1}}\int_{ |y|/2}^{2|y|} \frac{1}{\la r-|y|\ra^{\frac{n+1}{2}}}\, dr\les 1,
	$$ 
	uniformly in $y$. By symmetry in $x$ and $y$, this part of the operator has an admissible kernel and is bounded for any $1 \leq p \leq \infty$.
	
	On the second region (using that $|\,|x|-|y|\,| \approx |y|$ when $|x|<\f12|y|$) we see
	\[
	\int_{|x| < \frac12|y|} \frac{1}{\la x\ra^{\frac{n-1}{2}p}\la y\ra^{\frac{n-1}{2}p} \la|x| - |y|\ra^{\frac{n+1}{2}p}}dx \les \int_{|x| < \frac12|y|} \frac{1}{\la x\ra^{\frac{n-1}{2}p} \la y\ra^{np}}dx \les \la y\ra^{\max(n -\frac{3n-1}{2}p, -np)},
	\] 
	and (using that $|\,|x|-|y|\,| \approx |x|$ when $|x|>2|y|$)
	\[
	\int_{|x| > 2|y|} \frac{1}{\la x\ra^{\frac{n-1}{2}p} \la y\ra^{\frac{n-1}{2}p}\la|x| - |y|\ra^{\frac{n+1}{2}p}}dx \les \int_{|x| > 2|y|} \frac{1}{\la x\ra^{np}\la y\ra^{\frac{n-1}{2}p}}dx \les \la y\ra^{n-\frac{3n-1}{2}p} \text{ when } p>1.
	\] 
	The constraint on the range of $p$ occurs when $|x|$ is large.  Noting that $\la y\ra^{\max(n/p -\frac{3n-1}{2}, -n)} = \la y\ra^{\max(-n/p' - (n-1)/2, -n)}$ belongs to $L^{p'}(\R^n)$ for any $1< p < \infty$ so these parts of the operator $K(x,y)$ are bounded on $L^p(\R^n)$ as long as $1 < p < \infty$.
	
	When $\epsilon > 0$ using polar coordinates we see that
	\[
	\sup_{y\in \mathbb R^n}\int_{\R^n} \frac{1}{\la x\ra^{\frac{n-1}{2}} \la y \ra^{\frac{n-1}{2}} \la |x|-|y|\ra^{\frac{n+1}{2}+\epsilon}}\,dx
	= \sup_{y\in \mathbb R^n} C_n \int_0^\infty \frac{r^{n-1}}{\la r\ra^{\frac{n-1}{2}} \la y\ra^{\frac{n-1}{2}} \la r-|y|\ra^{\frac{n+1}{2}+\epsilon}}dr
	\les 1.
	\]
	The last inequality follows by  breaking up into regions based on whether $r\leq \f12|y|$, $r\approx |y|$ or $r\geq 2|y|$.  Similar to the previous case, integrability for large $r$ requires $\epsilon >0$.  By symmetry in $x$ and $y$, $K$ has an admissible kernel and is bounded for $1\leq p\leq \infty$.
	
\end{proof}
 
Finally, the following elementary integral estimates are used throughout the paper.

\begin{lemma}[Lemma 3.8 in \cite{GV}]\label{lem:GV lem}
	
	Let $k, \beta$ be such that $k<n$ and $n<\beta+k$.  Then
	$$
	\int_{\R^n} \frac{du}{\la u\ra^\beta |x-u|^k}\les \left\{  \begin{array}{ll}
	\la x\ra^{n-\beta-k} & \beta<n\\
	\la x \ra ^{-k} & \beta >n
	\end{array} \right..
	$$
	
\end{lemma}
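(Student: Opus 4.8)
The plan is to prove the estimate by a dyadic decomposition of the domain of integration. Write $I(x):=\int_{\R^n}\la u\ra^{-\beta}|x-u|^{-k}\,du$. Since $k<n$ the kernel $|\cdot|^{-k}$ is locally integrable, and since $\beta+k>n$ the integrand decays integrably at infinity, so $I(x)$ is finite for every $x$. In particular, splitting the integral into $\{|u-x|\le2\}$ (where $\la u\ra^{-\beta}\les1$, so the contribution is $\les\int_{|v|\le2}|v|^{-k}\,dv\les1$) and $\{|u-x|>2\}$ (where, for $|x|\le1$, one has $|u-x|\approx|u|\approx\la u\ra$, so the contribution is $\les\int_{|u|\gtrsim1}\la u\ra^{-\beta-k}\,du\les1$) shows $\sup_{|x|\le1}I(x)<\infty$, which is consistent with the claimed bounds when $|x|\le1$. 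It therefore suffices to treat $|x|\ge1$, where $\la x\ra\approx|x|$ and the target is $I(x)\les|x|^{n-\beta-k}$ if $\beta<n$ and $I(x)\les|x|^{-k}$ if $\beta>n$.

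For $|x|\ge1$ I would decompose $\R^n=A_1\cup A_2\cup A_3\cup A_4$ with $A_1=\{|u|\le|x|/2\}$, $A_2=\{|u-x|\le|x|/2\}$, $A_3=\{|x|/2<|u|<2|x|\}\setminus A_2$, $A_4=\{|u|\ge2|x|\}$, and estimate each piece. On $A_1$ one has $|x-u|\approx|x|$, so $\int_{A_1}\les|x|^{-k}\int_{|u|\le|x|/2}\la u\ra^{-\beta}\,du$, and the remaining integral is $\les|x|^{n-\beta}$ when $\beta<n$ and $\les1$ when $\beta>n$; hence $\int_{A_1}\les|x|^{n-\beta-k}$ or $|x|^{-k}$ respectively. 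On $A_2$ one has $\la u\ra\approx|x|$, so $\int_{A_2}\les|x|^{-\beta}\int_{|u-x|\le|x|/2}|x-u|^{-k}\,du\les|x|^{-\beta}\,|x|^{n-k}=|x|^{n-\beta-k}$, using $k<n$. On $A_3$ we have $\la u\ra\approx|x|$ and also $|x-u|\approx|x|$ (since $|x|/2<|u-x|\le|u|+|x|\le3|x|$), while $|A_3|\les|x|^n$, so $\int_{A_3}\les|x|^{-\beta}|x|^{-k}|x|^n=|x|^{n-\beta-k}$. On $A_4$, $|u|\ge2|x|$ forces $|x-u|\approx|u|\approx\la u\ra$, so $\int_{A_4}\les\int_{|u|\ge2|x|}\la u\ra^{-\beta-k}\,du\les|x|^{n-\beta-k}$, the integral converging because $\beta+k>n$.

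Summing the four contributions: when $\beta<n$ each is $\les|x|^{n-\beta-k}$, giving $I(x)\les|x|^{n-\beta-k}\approx\la x\ra^{n-\beta-k}$; when $\beta>n$ we have $n-\beta-k<-k$, so for $|x|\ge1$ each contribution is $\les|x|^{-k}$ and $I(x)\les|x|^{-k}\approx\la x\ra^{-k}$. This is the assertion. There is no serious obstacle here; the only point to keep track of — and the reason the statement distinguishes $\beta<n$ from $\beta>n$ — is the estimate of $\int_{|u|\le|x|/2}\la u\ra^{-\beta}\,du$ on $A_1$, which acquires a logarithmic factor exactly at $\beta=n$ (a borderline case excluded by hypothesis), so the two regimes must be handled separately rather than through a single interpolated bound.
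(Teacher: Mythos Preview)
Your proof is correct. The paper does not actually prove this lemma; it is quoted verbatim from Goldberg--Visan \cite{GV} (Lemma~3.8 there) as a technical tool, so there is no in-paper argument to compare against. Your four-region decomposition $A_1,\dots,A_4$ is the standard way to establish such convolution-type bounds and handles all cases cleanly.
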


\begin{lemma}[Lemma~6.3 in \cite{EG}]\label{lem:IMRN lems}
	
	Fix $u_1, u_2 \in \R^n$, and let $0\leq k, \ell<n$, $\beta > 0$,  
	$k+\ell+\beta \geq n$, $k+\ell \neq n$.   We have 
	\begin{align*}
	\int_{\R^n}\frac{\langle z \rangle^{-\beta-} dz}
	{|z-u_1|^k|z-u_2|^\ell} \lesssim \left\{
	\begin{array}{lc} 
	\big(\frac{1}{|u_1-u_2|}\big)^{\max(0,k+\ell-n)} 
	& |u_1-u_2|\leq 1\\
	\big(\frac{1}{|u_1-u_2|}\big)^{\min(k, \ell, k+\ell+\beta-n)} 
	& |u_1-u_2| > 1
	\end{array}\right..
	\end{align*}
	
\end{lemma}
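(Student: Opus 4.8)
Set $d=|u_1-u_2|$. The plan is to split $\R^n$ into the three regions $\Omega_1,\Omega_2,\Omega_0$ on which, respectively, the point of $\{u_1,u_2,0\}$ nearest to $z$ is $u_1$, is $u_2$, is $0$ (ties broken arbitrarily), and to bound the integral over each region by collapsing the three-centered weight to a single radial integral about the nearest center. On $\Omega_1$ one has $|z-u_1|\le|z-u_2|$ and $|z-u_1|\le|z|$, from which elementary triangle-inequality arguments give $|z-u_2|\gtrsim\max(d,|z-u_1|)$ and $\la z\ra\ge\la z-u_1\ra$; the analogous facts hold on $\Omega_2$, while on $\Omega_0$ one has $|z-u_1|\gtrsim\max(|z|,d)$ and $|z-u_2|\gtrsim|z|$ provided $|u_1|\ge d/2$ (the symmetric statement holds otherwise). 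Note $|u_1|+|u_2|\ge d$, so at least one of $|u_1|,|u_2|$ is $\ge d/2$.

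On $\Omega_1$, substituting $w=z-u_1$ and inserting the bounds above,
\begin{align*}
\int_{\Omega_1}\frac{\la z\ra^{-\beta-}\,dz}{|z-u_1|^k|z-u_2|^\ell}
&\les\int_{\R^n}\frac{\la w\ra^{-\beta-}\,dw}{|w|^k\,\max(d,|w|)^\ell}\\
&=d^{-\ell}\int_{|w|\le d}\frac{\la w\ra^{-\beta-}}{|w|^k}\,dw+\int_{|w|>d}\frac{\la w\ra^{-\beta-}}{|w|^{k+\ell}}\,dw.
\end{align*}
Both integrals on the right are one-dimensional radial integrals, evaluated by elementary calculus: the hypothesis $k<n$ gives integrability at $w=0$, the hypothesis $k+\ell+\beta\ge n$ together with the $\epsilon$ implicit in $\la w\ra^{-\beta-}$ gives integrability at infinity, and the hypothesis $k+\ell\ne n$ avoids a logarithm in the range $d\le|w|\le1$. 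One obtains a bound $\les d^{-\max(0,k+\ell-n)}$ when $d\le1$ and $\les\max\big(d^{-\ell},\,d^{\,n-k-\ell-\beta-}\big)$ when $d>1$; since $d>1$ and both $\ell$ and $k+\ell+\beta-n$ are $\ge\min(k,\ell,k+\ell+\beta-n)$, the latter is $\les d^{-\min(k,\ell,k+\ell+\beta-n)}$. The region $\Omega_2$ is handled identically with $k$ and $\ell$ swapped.

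For $\Omega_0$ (say $|u_1|\ge d/2$; otherwise interchange $u_1,u_2$) the same bounds give
\[
\int_{\Omega_0}\frac{\la z\ra^{-\beta-}\,dz}{|z-u_1|^k|z-u_2|^\ell}
\les d^{-k}\int_{|z|\le d}\frac{\la z\ra^{-\beta-}}{|z|^\ell}\,dz+\int_{|z|>d}\frac{\la z\ra^{-\beta-}}{|z|^{k+\ell}}\,dz,
\]
and the two radial integrals are estimated exactly as before, now using $\ell<n$ for integrability near the origin; this yields the same bounds with $k$ and $\ell$ interchanged. Adding the contributions of $\Omega_1,\Omega_2,\Omega_0$ proves the lemma.

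The main obstacle is that one must keep \emph{both} singular factors active simultaneously: it is tempting to bound $|z-u_2|\gtrsim d$ on $\Omega_1$ and then quote a single-weight estimate such as Lemma~\ref{lem:GV lem} for the leftover $\int\la z\ra^{-\beta-}|z-u_1|^{-k}\,dz$, but that integral diverges at infinity unless $k+\beta>n$, which is not assumed. The remedy used above is to retain the factor $\max(d,|z-u_1|)^{-\ell}$ coming from the sharper bound $|z-u_2|\gtrsim|z-u_1|$ valid once $|z-u_1|\gtrsim d$; tracking the ensuing competition between the three radial ranges $|w|\lesssim d$, $d\lesssim|w|\lesssim1$, and $|w|\gtrsim1$ — and verifying that the excluded case $k+\ell=n$ is exactly the one producing a divergent logarithm in the middle range — is the heart of the computation.
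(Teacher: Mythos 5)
The paper does not prove this lemma; it is imported as Lemma~6.3 of \cite{EG}, so there is nothing in the present paper to compare against. Your argument is correct: the split according to which of the three points $\{0,u_1,u_2\}$ is nearest to $z$ is the natural device here, and you rightly note that a two-region split (nearest of $u_1,u_2$ only) would strand the weight $\la z\ra^{-\beta-}$ with no center to absorb it, while the crude bound $|z-u_2|\gtrsim d$ alone leaves a single-weight integral that can diverge when $k+\beta\le n$. The triangle-inequality reductions on each region, the observation that at least one of $|u_1|,|u_2|\ge d/2$, and the bookkeeping over the three radial ranges $|w|\lesssim d$, $d\lesssim|w|\lesssim 1$, $|w|\gtrsim 1$ (with $k<n$ for integrability at the center, $k+\ell+\beta-n>0$ after adding $\epsilon$ for integrability at infinity, and $k+\ell\ne n$ to avoid a logarithm in the middle range when $d\le1$) are all handled correctly; combining $\max(d^{-\ell},d^{-k},d^{\,n-k-\ell-\beta-})\les d^{-\min(k,\ell,k+\ell+\beta-n)}$ for $d>1$ closes the argument.
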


\section*{Acknowledgments}
The authors wish to thank the anonymous referee whose thorough review greatly improved the presentation of this paper.

\end{document}